\documentclass[11pt,a4paper]{article}
\usepackage[utf8]{inputenc}
\usepackage[english]{babel}
\usepackage[T1]{fontenc}
\usepackage{lmodern}
\usepackage{amsmath}
\usepackage{accents}
\usepackage{amsfonts}
\usepackage{amssymb}
\usepackage{amsthm}
\usepackage{amscd}
\usepackage{url}
\usepackage[shortlabels]{enumitem}
\usepackage{hhline}
\usepackage{dsfont}
\usepackage{array}

\usepackage{geometry}
\usepackage{tikz}
\usetikzlibrary{decorations.pathreplacing}
\usetikzlibrary{patterns}
\usepackage{xcolor}
\usetikzlibrary{arrows}
\usepackage{mdframed}
\usepackage{caption}
\usepackage[titletoc]{appendix}
\usepackage{csquotes}
\usepackage{hyperref}
\usepackage{float}
\usepackage{bm}
\usepackage{microtype}
\usepackage{graphicx}


\mdfsetup{nobreak=true}

\AtBeginDocument{
  \label{CorterrectFirstPageLabel}
  
}

\usepackage[
style=alphabetic,
maxalphanames=10,
minalphanames=5,
maxbibnames=10,
firstinits=true,
backend=biber,
doi=false,isbn=false,url=false
]{biblatex}
\addbibresource{NodalMoments.bib}



\geometry{hmargin=2.5cm,vmargin=2.5cm}

\newcolumntype{L}{>{$}l<{$}}

\newcommand\f{f} 

\newcommand{\BB}{\mathcal{B}}

\newcommand\N{\mathbb{N}}

\newcommand\HH{\mathcal{H}}

\newcommand\R{\mathbb{R}}

\newcommand\C{\mathbb{C}}

\newcommand\Ss{\mathcal{S}}
\newcommand\Sp{\mathbb{S}}

\newcommand\E{\mathbb{E}}
\newcommand\Id{\mathrm{Id}}
\DeclareMathOperator\Var{Var}
\DeclareMathOperator\supp{supp}

\newcommand\dd{\mathrm{d}}
\newcommand\CC{\mathcal{C}}

\newcommand\GOE{\mathrm{GOE}}

\DeclareMathOperator\Ker{Ker}
\DeclareMathOperator\Sym{Sym}
\DeclareMathOperator\Imm{Im}

\DeclareMathOperator\Sesq{Sesq}

\DeclareMathOperator\Tr{Tr}

\newcommand{\enstq}[2]{\left\{#1~\middle|~#2\right\}}

\newcommand\Vect{\mathrm{Vect}}
\DeclareMathOperator\Vol{Vol}
\DeclareMathOperator\dist{dist}

\newcommand\str{\;\bar{*}\;}
\newcommand\one{\mathds{1}}

\newcommand\numberthis{\addtocounter{equation}{1}\tag{\theequation}}
\newcommand\jump{\par\medskip}
\newcommand\quand{\quad\text{and}\quad}
\newcommand\qsothat{\quad\text{so that}\quad}

\newcommand\qwith{\quad\text{with}\quad}

\theoremstyle{plain}
\newtheorem{theo}{Theorem}[section]
\newenvironment{theorem}%
  {\begin{mdframed}[backgroundcolor=white]\begin{theo}}%
  {\end{theo}\par\vspace{0.1cm}\end{mdframed}}
  
\theoremstyle{plain}
\newtheorem{coro}[theo]{Corollary}
\newenvironment{corollary}%
  {\begin{mdframed}[backgroundcolor=white]\begin{coro}}%
  {\end{coro}\par\smallskip\end{mdframed}}
  
\theoremstyle{plain}
\newtheorem{lemm}[theo]{Lemma}
\newenvironment{lemma}%
  {\begin{mdframed}[backgroundcolor=white]\begin{lemm}}%
  {\end{lemm}\par\vspace{0.cm}\end{mdframed}}

\theoremstyle{plain}
\newtheorem{prop}[theo]{Proposition}
  {\begin{mdframed}[backgroundcolor=white]\begin{prop}}%
  {\end{prop}\par\vspace{0.cm}\end{mdframed}}
  
\theoremstyle{definition}
\newtheorem{defn}[theo]{Definition}
  {\begin{mdframed}[backgroundcolor=white]\begin{defn}}%
  {\end{defn}\par\vspace{0.cm}\end{mdframed}}

\newenvironment{acknowledgements}{%
  \begin{abstract}
}{%
  \end{abstract}
}

\theoremstyle{definition}
\newtheorem{remark}[theo]{Remark}

\makeatletter
\def\blfootnote{\gdef\@thefnmark{}\@footnotetext}
\makeatother
  
\makeatletter
\renewcommand*\env@matrix[1][*\c@MaxMatrixCols c]{%
  \hskip -\arraycolsep
  \let\@ifnextchar\new@ifnextchar
  \array{#1}}
\makeatother

\begin{document}

\title{\Huge{Spectral Criteria for the Asymptotics of Local Functionals of Gaussian Fields and Their Application to Nodal Volumes and Critical Points}}
\author{Louis Gass}
\maketitle
\blfootnote{University of Luxembourg}
\blfootnote{This work was supported by the Luxembourg National Research Fund (Grant: 021/16236290/HDSA)}
\blfootnote{Email: louis.gass(at)uni.lu}

\begin{abstract}
We establish a general criterion for the positivity of the variance of a chaotic component of local functionals of stationary vector-valued Gaussian fields. This criterion is formulated in terms of the spectral properties of the covariance function, without requiring integrability or isotropy. It offers a simple and robust framework for analyzing variance asymptotics in such models. We apply this approach to the study of the nodal volume and the number of critical points of a Gaussian field, proving the positivity of the limiting variance under mild conditions on the covariance function. Additionally, we examine the asymptotics of nodal volume and critical points of Euclidean random waves, deriving the central limit theorem through an analysis of the second and fourth chaotic components. As a byproduct, we unify and generalize many existing results on the volume of intersections of random waves and their critical points, bypassing the need for traditional, intricate variance computations. Our findings shed new light on the second-chaos cancellation phenomenon from a spectral perspective and can be extended to any local, possibly singular, functional of Gaussian fields.
\end{abstract}
\newpage

\renewcommand\contentsname{} 

\begingroup
\let\clearpage\relax
\vspace{-1cm} 
\setcounter{tocdepth}{2}
\tableofcontents
\endgroup

\setcounter{tocdepth}{2}
\tableofcontents\jump

\section{Introduction and main results}
\label{sec10}
\subsection{A general framework}
\label{sec11}
Let $V$ and $W$ be two finite dimensional vector spaces of dimension $d$ and $n$ respectively. Let $X:V\rightarrow W$ be a vector-valued stationary Gaussian random field with a continuous non-degenerate covariance function $\Omega:V\rightarrow S^2(W^*)$, where this last space denotes the set of symmetric bilinear maps on $W$. We denote by $\eta$ the associated Gaussian measure on $W$, and let $f : W\rightarrow \R$ in $L^2(\eta)$. Our primal object of interest is the quantity 
\[Z_\lambda(\phi) = \frac{1}{\lambda^{d/2}}\int_{V}\phi\left(\frac{v}{\lambda}\right)f(X(v))\dd v,\numberthis\label{eq:01}\]
for a test function $\phi$ such that $\|\phi\|_2=1$. We say that $Z_\lambda$ satisfies a Central Limit Theorem (CLT) if the following convergence in distribution holds
\[\frac{Z_\lambda(\phi)-\E[Z_\lambda(\phi)]}{\sqrt{\Var(Z_\lambda(\phi))}}\underset{\lambda \rightarrow +\infty}{\longrightarrow} \mathcal{N}(0,1)\]
A proof of the CLT is usually split into two very distinct problems: the asymptotic normality and the positivity of the limiting variance. The asymptotic normality is well-understood and was first proved in \cite{Cuz76} in the case $d=n=1$ and $\Omega\in L^1(\R^d)$ by approximation using $m$-dependent processes. Since then, it has been refined with the introduction of the Wiener chaos expansion, which has become an indispensable tool for proving asymptotic results related to $Z_\lambda(\phi)$. In particular, the celebrated Breuer--Major theorem \cite{Bre83} states that asymptotic normality holds if the Hermite rank of $f$ is $q$ and $\Omega\in L^q(V)$. Modern proofs of this theorem now include quantitative CLT and functional convergence \cite{Nou11, Nou20, Nua20} and make use of more advanced techniques, such as Malliavin calculus and Stein's method. The literature on the topic is extensive.\jump

In the case $n=1$, the positivity of the limiting variance can be established using simple Fourier arguments under minimal conditions, such as assuming that $f$ is not odd or that the spectral measure has a density with respect to the origin, see \cite{Cuz76}. For $n>1$, the situation is more intricate, and the positivity of the limiting variance cannot be inferred from straightforward observations on $f$ and $\Omega$. The first goal of this paper is to establish general conditions on $\Omega$ and $f$ to ensure that the limiting variance of $Z_\lambda(\phi)$ is positive, i.e., that $Z_\lambda(\phi)$ does not vanish at infinity.\jump

The heuristic of the approach for obtaining a lower bound for the limiting variance is the following. Since the process $f\circ X:V\rightarrow \R$ is stationary, its covariance function is a positive semi-definite function, meaning that its Fourier transform $\rho$ is a finite measure on $V^*$. This fact can in turn be exploited to obtain lower bound for the variance in \eqref{eq:01}. Indeed, in the simple case where $\rho$ has a density near the origin w.r.t the Lebesgue measure then
\[\lim_{\lambda\rightarrow +\infty} \Var(Z_\lambda(\phi)) = \rho(0),\]
thus the limiting variance is positive if and only if $\rho(0)$ is positive. In a more general framework, the variance can be understood from local properties of the spectral measure $\rho$ near the origin. To explicitly compute the measure $\rho$ in terms of the spectral measure $\mu$ associated to $X$, the most direct path is to expand $f$ into its orthogonal Hermite decomposition on the space $L^2(\eta)$. The variance will then be expressed as sum of powers of the covariance function $\Omega$. Passing to Fourier transform, this yields an expression of $\rho$ in terms of a sum of non-negative terms, given by iterated convolutions of the spectral measure $\mu$ and Hermite expansion of $f$. Additional comments on the advantage of this approach will be made after the statement of Theorem \ref{thm4}.\jump

Let $\eta$ be a non-degenerate Gaussian measure on $W$ with associated scalar product $\langle\,\cdot\,,\,\cdot\,\rangle$, and let $(H^q)_{q\geq 0}$ be the associated sequence of Hermite $q$-forms, seen as polynomial functions on $W$ taking values in the space of symmetric $q$-linear maps on $W$. If we identify $W$ with $\R^n$ then $H^q$ coincides with the standard multivariate hermite polynomials. Note that this point of view in terms of multilinear forms, without the use of coordinates, allows for more compact and concise expressions, which will greatly simplify the statement of the theorems and their proofs. A function $f\in L^2(\eta)$ can be expanded into its chaotic coefficients $(f_q)_{\geq 0}$ on the $q$-th Wiener space, seen as symmetric $q$-linear maps on $W$, so that in holds in $L^2(\eta)$ the decomposition
\[f = \sum_{q=0}^{+\infty} \pi_q(f) = \sum_{q=0}^{+\infty}\frac{1}{q!}\langle f_q, H^q\rangle.\]
We refer to Section \ref{sec20} for more details.\jump

Since in the following, we use Fourier transform arguments, but we want to keep the notations to general vector space notations, we introduce $W_\C = W+iW$, the complexification of $W$. One must simply understand $W$ as $\R^n$ and $W_\C$ as $\C^n$. By Bochner theorem, the Fourier transform $\mu$ of the covariance function $\Omega$ is an operator-valued measure on $V^*$, taking values in the space of positive Hermitian form on $W_\C^*$. Its $q$-fold convolution, denoted by $\mu^{*q}$, can be seen as measure on $V$, taking values in the space of positive Hermitian forms on the space of symmetric $q$-linear forms on $W_\C$. These facts are explained in more detail in Section \ref{sec30}.\jump

In the following, we fix a function $\phi\in L^1\cap L^2(V)$, with $\|\phi\|_2=1$, and we define $\gamma = |\hat{\phi}|^2$ on $V^*$, where the $\hat{}$ notation stand for the Fourier transform. Let $\gamma_\lambda:v\mapsto \lambda^d\gamma(\lambda v)$, which weakly asymptotically converges to the Dirac delta function at $0$, since by Plancherel isometry, $\|\gamma\|_1=1$. We define the least radially decreasing majorant and the greatest radially decreasing minorant of $\gamma$ by the formulas
\[\gamma^+ :x\rightarrow \sup_{\|y\|\geq \|x\|}\gamma(y)\quand \gamma^- :x\rightarrow \inf_{\|y\|\leq \|x\|}\gamma(y).\]
We say that $\phi$ is a \textit{test function} if $\gamma^+\in L^1(V^*)$. By property of the Fourier transform, if the function $\phi$ is absolutely continuous with bounded variation, or if it is the Fourier transform of a ball or more generally of a smooth compact hypersurface with positive Gauss curvature, this hypothesis is satisfied, see \cite{Her62} and the heuristic explanation before Lemma \ref{lemma16}. Note also that $\gamma$ is continuous and $\gamma(0) = 1$, so that $\gamma^-$ is nonzero.\jump

\subsubsection{A general spectral expression for the variance}
We define the $q$-th chaotic projection of $Z_\lambda(\phi)$ by
\[Z_\lambda^{(q)}(\phi) = \frac{1}{\lambda^{d/2}}\int_{V}\phi\left(\frac{v}{\lambda}\right)\pi_q(f)(X(v))\dd v,\]
\begin{theorem}
\label{thm1}
One has
\[\Var(Z_\lambda(\phi)) = \sum_{q=1}^{+\infty} \Var(Z_\lambda^{(q)}(\phi)).\]
and
\[\Var(Z_\lambda^{(q)}(\phi))= \frac{1}{q!}\int_{V^*}\gamma_\lambda(\xi)\,\dd \mu^{*q}(f_q)(\xi).\]
\end{theorem}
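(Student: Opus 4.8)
The plan is to compute the variance of $Z_\lambda(\phi)$ directly from the definition, using the orthogonality of Wiener chaoses to split the variance into a sum over $q$, and then to identify each term via a Fourier-transform computation. First I would write
\[
Z_\lambda(\phi) - \E[Z_\lambda(\phi)] = \sum_{q\geq 1} Z_\lambda^{(q)}(\phi),
\]
where the $q=0$ term is deterministic hence disappears. Since $\pi_q(f)(X(v))$ lies in the $q$-th Wiener chaos for every $v$ (the field $X$ being fixed, $v$ is just a parameter), the random variables $Z_\lambda^{(q)}(\phi)$ for different $q$ are orthogonal in $L^2(\Omega,\mathcal F,\PP)$; moreover each has mean zero for $q\geq 1$. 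Hence the covariance of the sum is the sum of the variances, which gives the first identity. The one technical point here is justifying the interchange of the (infinite) chaos sum with the integral over $V$ and with the $L^2$ inner product; this follows from the $L^2(\eta)$-convergence of the Hermite expansion of $f$ together with $\phi\in L^1$ and the stationarity of $X$ (so that $v\mapsto \E[\pi_q(f)(X(v))\pi_q(f)(X(0))]$ is bounded by $\|\pi_q(f)\|_{L^2(\eta)}^2$ and summable in $q$), allowing a dominated-convergence / Fubini argument.

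Next I would compute $\Var(Z_\lambda^{(q)}(\phi))$. Expanding the square,
\[
\Var(Z_\lambda^{(q)}(\phi)) = \frac{1}{\lambda^{d}}\int_{V}\int_{V}\phi\!\left(\tfrac{v}{\lambda}\right)\overline{\phi\!\left(\tfrac{w}{\lambda}\right)}\,\E\big[\pi_q(f)(X(v))\,\pi_q(f)(X(w))\big]\,\dd v\,\dd w.
\]
By stationarity the expectation depends only on $v-w$, and the diagram/isometry formula for products of Hermite $q$-forms expresses it as $\tfrac{1}{q!}$ times the value of the symmetric $q$-linear form $f_q\otimes f_q$ contracted against $\Omega(v-w)^{\otimes q}$ — in the coordinate-free language of the paper this is exactly the pairing that, upon taking Fourier transforms, becomes integration against $\mu^{*q}(f_q)$. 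Changing variables $u = v-w$ and using $\int_V \phi(\tfrac{w}{\lambda})\overline{\phi(\tfrac{w-u}{\lambda})}\dd w = \lambda^d (\phi \star \tilde\phi)(u/\lambda)$ whose Fourier transform is $\gamma_\lambda = |\widehat{\phi_\lambda}|^2$, I would apply the Plancherel/Parseval identity to turn the spatial convolution integral into
\[
\Var(Z_\lambda^{(q)}(\phi)) = \frac{1}{q!}\int_{V^*}\gamma_\lambda(\xi)\,\dd\mu^{*q}(f_q)(\xi),
\]
using that the Fourier transform of $u\mapsto$ (the $q$-fold "power" of $\Omega$ contracted with $f_q$) is precisely $\mu^{*q}(f_q)$, by Bochner's theorem and the fact that Fourier transform turns pointwise products into convolutions — this is the content already set up in Section~\ref{sec30}.

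The main obstacle, and the step requiring the most care, is making rigorous the operator-valued Fourier analysis: one must check that $\E[\pi_q(f)(X(\cdot))\,\pi_q(f)(X(0))]$ is a positive-definite function on $V$ whose Fourier transform is the nonnegative finite measure $\tfrac{1}{q!}\mu^{*q}(f_q)$ on $V^*$, and that this measure pairs correctly against the bounded continuous density $\gamma_\lambda$. This amounts to verifying that the $q$-fold convolution of the operator-valued spectral measure $\mu$, once contracted with the symmetric tensor $f_q$, indeed represents the Fourier transform of $\langle f_q, \Omega(\cdot)^{\otimes q}\rangle$; the convolution is well-defined as a finite measure because $\mu$ is finite (its total mass is $\Omega(0)$) and $f_q$ is a fixed tensor, so $\mu^{*q}(f_q)$ has finite total mass equal to $\langle f_q, \Omega(0)^{\otimes q}\rangle = q!\,\|\pi_q(f)\|_{L^2(\eta)}^2$, which also re-proves the classical scalar identity $\Var(Z_\lambda^{(q)}(\phi)) \to \tfrac{1}{q!}\mu^{*q}(f_q)(\{0\})$-type statements as a sanity check. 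Everything else — the mean-zero property, chaos orthogonality, and the change of variables — is routine once this Fourier dictionary is in place.
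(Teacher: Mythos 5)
Your proposal is correct and follows essentially the same route as the paper's proof: orthogonality of the Wiener chaoses (equivalently, the covariance expansion $\E[\pi_q(f)(X(u))\pi_q(f)(X(v))]=\frac{1}{q!}\Omega(u-v)^{\otimes q}(f_q,f_q)$ from Equation \eqref{eq:13}) gives the first identity, and stationarity plus Plancherel, with $\widehat{\Omega^{\otimes q}(f_q,f_q)}=\mu^{*q}(f_q)$ and $\gamma_\lambda=|\widehat{\phi_\lambda}|^2$, gives the second. Your additional remarks on justifying the interchange of sum and integral and on the finiteness of $\mu^{*q}(f_q)$ are sound and only make explicit what the paper leaves implicit.
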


Since $\mu^{*q}$ is a positive Hermitian measure, the measure $\mu^{*q}(f_q)$ is non-negative. The expression of the variance in \ref{thm1} then has the advantage, compared to the "non-spectral" expression for the variance of the $q$-th chaos, of being written as the integrand of a non-negative measure, which is our first starting point to give lower bound for the limiting variance. As $\lambda$ goes to infinity, this expression formally converges to the Lebesgue density at $0$ of the measure $\mu^{*q}(f_q)$. To precise this heuristic, we define for a real bounded measure $\nu$ on $V^*$ the quantities
\[D^+\nu = \limsup_{R\rightarrow 0}\frac{\nu(B(0,R))}{\Vol(B(0,R))} \quand D^-\nu = \liminf_{R\rightarrow 0}\frac{\nu(B(0,R))}{\Vol(B(0,R))}.\]
If $D^-\nu = D^+\nu$, we call this quantity $D\nu$. The quantity $D\nu$, when it exists, can be thought as a weak version of the density at $0$ of the measure $\nu$ w.r.t the Lebesgue measure on $V^*$.
\begin{corollary}
\label{cor1}
\[\liminf_{\lambda\rightarrow +\infty} \Var(Z_\lambda^{(q)}(\phi))\geq  \frac{1}{q!}\|\gamma^-\|_1D^-(\mu^{*q}(f_q)),\]
and
\[\limsup_{\lambda\rightarrow +\infty} \Var(Z_\lambda^{(q)}(\phi))\leq \frac{1}{q!}\|\gamma^+\|_1 D^+(\mu^{*q}(f_q)).\]
\end{corollary}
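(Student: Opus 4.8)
The plan is to deduce the corollary directly from the spectral formula of Theorem~\ref{thm1}, which rewrites $q!\,\Var(Z_\lambda^{(q)}(\phi))$ as $\int_{V^*}\gamma_\lambda\,\dd\nu$ for the finite non-negative measure $\nu:=\mu^{*q}(f_q)$, and to reduce the statement to a purely real-analytic fact about how an approximate identity of the form $\xi\mapsto\lambda^d g(\lambda\xi)$, with $g$ radially non-increasing and integrable, tests a bounded measure near the origin. Since $\gamma^-\le\gamma\le\gamma^+$ and $\gamma^\pm\ge 0$, I would sandwich $\int_{V^*}\gamma_\lambda\,\dd\nu$ between $\int_{V^*}\lambda^d\gamma^-(\lambda\,\cdot)\,\dd\nu$ and $\int_{V^*}\lambda^d\gamma^+(\lambda\,\cdot)\,\dd\nu$; it then suffices to show $\limsup_{\lambda}\int_{V^*}\lambda^d\gamma^+(\lambda\,\cdot)\,\dd\nu\le\|\gamma^+\|_1\,D^+\nu$ together with the matching lower bound with $\gamma^-$ and $D^-\nu$, and to divide by $q!$ at the end.

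The main device will be the layer-cake (``onion'') decomposition. Because $\gamma^\pm$ is radially non-increasing, each superlevel set $\{\gamma^\pm>t\}$ is a ball $B(0,r_\pm(t))$ with $r_\pm$ non-increasing, so $\gamma^\pm=\int_0^{+\infty}\one_{B(0,r_\pm(t))}\,\dd t$; applying Fubini gives
\[\int_{V^*}\lambda^d\gamma^\pm(\lambda\,\cdot)\,\dd\nu=\int_0^{+\infty}\lambda^d\,\nu\big(B(0,r_\pm(t)/\lambda)\big)\,\dd t,\qquad \int_0^{+\infty}\Vol\big(B(0,r_\pm(t))\big)\,\dd t=\|\gamma^\pm\|_1,\]
where integrability of $\gamma^+$ is the test-function hypothesis and integrability of $\gamma^-$ follows from $\gamma^-\le\gamma^+$. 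This reduces everything to the behaviour of $R\mapsto\nu(B(0,R))/\Vol(B(0,R))$ as $R\to 0$, which is exactly what $D^\pm\nu$ encode: given $\epsilon>0$, I would fix $R_0>0$ such that $\nu(B(0,R))\le(D^+\nu+\epsilon)\Vol(B(0,R))$ (resp.\ $\ge(D^-\nu-\epsilon)\Vol(B(0,R))$ for $\epsilon<D^-\nu$, the case $D^-\nu=0$ being trivial since variances are non-negative) for all $R\le R_0$, and then split the $t$-integral according to whether $r_\pm(t)\le\lambda R_0$ or not.

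For the lower bound this is essentially immediate: one discards the non-negative contribution of $\{r_-(t)>\lambda R_0\}$, bounds the remaining integral from below by $(D^-\nu-\epsilon)\int_{\{r_-(t)\le\lambda R_0\}}\Vol(B(0,r_-(t)))\,\dd t$, and lets $\lambda\to+\infty$ (monotone convergence) and then $\epsilon\to 0$. For the upper bound, the ``small-ball'' range $r_+(t)\le\lambda R_0$ contributes at most $(D^+\nu+\epsilon)\int_0^{+\infty}\Vol(B(0,r_+(t)))\,\dd t=(D^+\nu+\epsilon)\|\gamma^+\|_1$, and the remaining ``large-ball'' range $r_+(t)>\lambda R_0$ is where the real work is: I would bound $\nu$ there by its total mass $\nu(V^*)<+\infty$ and observe that $\{t:r_+(t)>\lambda R_0\}$ has Lebesgue length at most the (common) value of $\gamma^+$ on the sphere of radius $\lambda R_0$, so that this part is at most $\nu(V^*)$ times $\lambda^d$ times that value.

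The step I expect to be the genuine obstacle is showing that this large-ball term vanishes as $\lambda\to+\infty$. It rests on the elementary but essential fact that a radially non-increasing function in $L^1(V^*)$ satisfies $\|x\|^d\gamma^+(x)\to 0$ as $\|x\|\to+\infty$, proved by comparing the value of $\gamma^+$ at $x$ to its integral over the dyadic annulus $\{\|x\|/2\le\|y\|\le\|x\|\}$, which is a tail of the convergent integral $\|\gamma^+\|_1$. Granting this, $\lambda^d$ times the value of $\gamma^+$ at radius $\lambda R_0$ equals $R_0^{-d}$ times a quantity tending to $0$, so the large-ball contribution disappears; letting $\lambda\to+\infty$ and then $\epsilon\to 0$ yields $\limsup_\lambda\int_{V^*}\lambda^d\gamma^+(\lambda\,\cdot)\,\dd\nu\le\|\gamma^+\|_1\,D^+\nu$. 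A minor technical point to keep track of is the measure-theoretic bookkeeping in the onion decomposition (openness/closedness of the superlevel balls, behaviour at $t=0$), but it only affects null sets and none of the integrals.
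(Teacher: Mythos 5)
Your proof is correct and follows essentially the same route as the paper's (which delegates everything to Lemma~\ref{lemma7}): sandwich $\gamma$ between its radially monotone minorant and majorant, use the layer-cake decomposition to rewrite the integral against the rescaled kernel as an integral of the ball ratios $\nu(B(0,R))/\Vol(B(0,R))$, and pass to the limit to extract $D^\pm\nu$. The only cosmetic difference is in the upper bound, where you control the large-ball tail explicitly via the decay $\|x\|^d\gamma^+(x)\to 0$ for radial $L^1$ functions, whereas the paper invokes dominated convergence with the maximal-function bound $M\nu(0)$ as the dominating constant; both are valid.
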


This corollary, which is a direct corollary of Theorem \ref{thm1} and Lemma \ref{lemma7}, puts forward the interaction between the iterated convolutions of the spectral measure, their regularity near the origin, and asymptotic results of local functionals of the Gaussian field. In the case $n=1$, the $q$-fold convolution of $\mu$ can be seen as the distribution of a random walk with transition probability distribution $\mu$ after $q$ steps. The properties of $\mu^{*q}$ near the origin are linked to the probability of return around $0$ of the random walk. This analogy has been successfully used in the series of papers \cite{Mar08, Mar10}, motivated  by the probabilistic modeling of the cosmic microwave background radiation, more recently in \cite{Lac22} to obtain variance asymptotics related to the volume of random excursions sets, and in \cite{Gro24} to study the covariance structure of the Berry random wave model. 
\jump

If $D\nu$ exists and is finite, one can conclude for general a measure $\mu$ that the variance converges if the function $\gamma$ is radially decreasing. If one assumes more hypothesis on $\mu^{*q}$, for instance that it has a bounded continuous density, then the convergence follows from more classical results concerning convolution. This is for instance the case if the covariance function $\Omega$ is in $L^q$, which is the classical setting of the Breuer-Major theorem, and one can drop the assumption that $\gamma^+$ is integrable. The following theorem gives alternative expressions in the case where the spectral measure $\mu$ has a density in $L^p$, where $p$ is the conjugate exponent of $q$.
\begin{corollary}
\label{cor2}
Let $p$ be the conjugate exponent to $q$. If $\mu$ has a density $\Sigma\in L^p(V^*)$ (in $C_0(V^*)$ if $q=1$) w.r.t the Lebesgue measure, then
\[\lim_{\lambda\rightarrow +\infty}\Var(Z_\lambda^{(q)}(\phi)) = \frac{1}{q!}\int_{\xi_1+\ldots+\xi_q=0}\bigotimes_{k=1}^q\Sigma(\xi_k)(f_q)\dd\xi.\]
For $q=1$, 
\[\lim_{\lambda\rightarrow +\infty}\Var(Z_\lambda^{(1)}(\phi)) = \Sigma(0)(f_1).\]
For $q=2$, in matrix form,
\[\lim_{\lambda\rightarrow +\infty}\Var(Z_\lambda^{(2)}(\phi)) = \frac{1}{2}\int_{V^*} \Tr(\Sigma(\xi)f_2\Sigma(\xi)f_2)\dd\xi.\]
\end{corollary}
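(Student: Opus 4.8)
The plan is to feed the $L^p$ hypothesis into the exact spectral identity of Theorem~\ref{thm1},
\[\Var(Z_\lambda^{(q)}(\phi)) = \frac{1}{q!}\int_{V^*}\gamma_\lambda(\xi)\,\dd\mu^{*q}(f_q)(\xi),\]
and to show that under this hypothesis the nonnegative scalar measure $\mu^{*q}(f_q)$ is absolutely continuous with a density that is \emph{continuous at the origin}; then the weak convergence $\gamma_\lambda\to\delta_0$ can be upgraded to genuine pointwise evaluation at $0$. Note first that $\phi\in L^1\cap L^2(V)$ already forces $\hat\phi\in L^\infty\cap L^2(V^*)$, hence $\gamma=|\hat\phi|^2\in L^1\cap L^\infty(V^*)$ with $\|\gamma\|_1=1$, so that $(\gamma_\lambda)_\lambda$ is a bona fide approximate identity and no integrability of $\gamma^+$ is needed in this regime.

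Since $\mu=\Sigma\,\dd\xi$ with $\Sigma\in L^1\cap L^p$ (recall $\Tr\Sigma\in L^1$ because the measure $\mu$ is finite, and $\Sigma\geq0$) taking values in the nonnegative Hermitian forms, the $q$-fold tensor-contracted convolution $\mu^{*q}$ of Section~\ref{sec30} is absolutely continuous with density the corresponding $q$-fold convolution $\Sigma^{*q}$ of $\Sigma$ with itself; evaluating at the fixed symmetric $q$-linear form $f_q$ gives a nonnegative scalar density $g:=\Sigma^{*q}(f_q)$ whose value on the diagonal is $g(0)=\int_{\xi_1+\cdots+\xi_q=0}\bigotimes_{k=1}^q\Sigma(\xi_k)(f_q)\,\dd\xi$. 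The analytic heart of the argument is that $g\in C_0(V^*)$: by iterated Young's inequality $\Sigma^{*(q-1)}\in L^q(V^*)$, while $\Sigma\in L^p(V^*)$ with $1/p+1/q=1$, and the convolution of an $L^q$ function with an $L^p$ function lies in $C_0$ (a uniform limit of convolutions of continuous compactly supported functions); pairing with the fixed form $f_q$ preserves this. For $q=1$ this step is vacuous and one uses directly the hypothesis $\Sigma\in C_0(V^*)$.

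With $g$ bounded, continuous at $0$, and $(\gamma_\lambda)$ an approximate identity, the usual splitting $\int\gamma_\lambda(\xi)\bigl(g(\xi)-g(0)\bigr)\,\dd\xi=\int_{\|\xi\|<\delta}+\int_{\|\xi\|\geq\delta}$, with the first term bounded by $\sup_{\|\xi\|<\delta}|g(\xi)-g(0)|$ and the second by $2\|g\|_\infty\int_{\|y\|\geq\lambda\delta}\gamma(y)\,\dd y\to0$, yields $\int\gamma_\lambda g\to g(0)$, which is the claimed limit after division by $q!$. The stated specializations are then bookkeeping: $q=1$ gives $\Sigma(0)(f_1)$ outright; for $q=2$, since $\Omega$ is real and even the same holds for $\mu$, so $\Sigma(-\xi)=\Sigma(\xi)$ and $g(0)=\int_{V^*}\Sigma(\xi)\otimes\Sigma(\xi)(f_2)\,\dd\xi$, and unwinding the pairing of Section~\ref{sec30} identifies $\Sigma(\xi)\otimes\Sigma(\xi)(f_2)$ with $\Tr\bigl(\Sigma(\xi)f_2\Sigma(\xi)f_2\bigr)$ in matrix form. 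I expect the one genuine obstacle to be checking that the operator-valued convolution structure really does inherit the scalar Young estimates once paired with $f_q$, together with the precise fibered representation of $g(0)$ over $\{\xi_1+\cdots+\xi_q=0\}$; the approximate-identity limit itself is routine.
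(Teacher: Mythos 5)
Your proposal is correct and takes essentially the same route as the paper: the paper's own proof likewise feeds the $L^p$ hypothesis into the identity of Theorem~\ref{thm1}, asserts that $\Sigma^{*q}$ is bounded and continuous (you supply the iterated Young's inequality giving $\Sigma^{*(q-1)}\in L^q$ and the fact that $L^p * L^q\subset C_0$, which the paper leaves implicit), and evaluates the approximate identity at the origin via the convolution formula. The only blemish is the claim that $\Sigma(-\xi)=\Sigma(\xi)$; for a vector-valued field one only has $\Sigma(-\xi)=\overline{\Sigma(\xi)}$, which is all that is needed to unwind the $q=2$ trace formula.
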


\subsubsection{A general cancellation theorem for the variance}
If $n=1$ then $f_q$ is a real number and $\mu$ is a bounded measure on $V^*$, so that $\mu^{*q}(f_q) = f_q^2\mu^{*q}$. At a fixed chaos, if $f_q$ is nonzero, the variance asymptotics is solely contained in the spectral measure $\mu$. This observation is substantially wrong as soon as the dimension $n$ is greater than one. It can happen that the variance asymptotics greatly differs for two different non zero $q$-linear maps $f_q$ and $g_q$. This general fact is generally described as a \textit{cancellation phenomenon}. This phenomenon is remarkable in the sense that a cancellation phenomenon can happen only for a class of $q$-linear map of codimension at least one, i.e. almost never. \jump

Several examples of cancellation phenomenon are issued from the context of random waves, see Section \ref{sec13}, but let us give a simple one here. Let $Y(t)$ be a random Gaussian process with covariance function $r(t) = \sin(t)/t$. We define the Gaussian process $X:\R\rightarrow \R^2$ by $X(t) = (Y(t), Y(t+\pi))$, with covariance function $\Omega$. Let $f,g:\R^2\rightarrow \R$ be defined by 
\[f(x,y) = (x^2-1) - (y^2-1)\quand g(x,y) = (x^2-1)+(y^2-1).\]
Both $f$ and $g$ belongs to the second Wiener chaos, and a direct computation shows that
\[\lim_{\lambda\rightarrow +\infty} \Var\left(\frac{1}{\sqrt{\lambda}}\int_0^{\lambda} f(X(t))\dd t\right) = 0\quad\text{but}\quad \lim_{\lambda\rightarrow +\infty} \Var\left(\frac{1}{\sqrt{\lambda}}\int_0^{\lambda} g(X(t))\dd t\right) =4\|r\|_2^2>0,\numberthis\label{eq:03}\]
so that the random process $Y$ is subject to cancellation. \jump

For a Hermitian form $f$ on a complex vector space $W$, we define its \textit{isotropic cone}
\[C(f) = \enstq{w\in W}{f(w)=0}.\]
The integrand in the limiting variance Corollary \ref{cor2} is non-negative. If there is cancellation, then the integrand is null for almost all coordinates $\xi_1,\ldots,\xi_q$ with null barycenter, which is a very restrictive condition. For instance, in the case $q=1$, the limiting variance cancels if and only if $f\in \Ker(\Sigma(0))$. In the case $q=2$, one can see (see Lemma \ref{lemma4} for a proof), that the limiting variance cancels if and only if a.s. $\Imm(\Sigma(\xi))\subset C(f_2)$. More generally, for $q$ even, one can exploit the fact that $\mu^{*2q} = (\mu^{*q})^{*2}$, so that formally, the Lebesgue density at zero of $\mu^{*2q}$ can be thought as the $L^2$ norm of $\mu^{*q}$. We define one more concept before stating the last theorem of this section.\jump

Given any measure $\mu$ taking values in the space of positive Hermitian forms on $W$, one can find a real measure $\nu$ on $V^*$ such that the measure $\mu$ has a density $\Sigma$ w.r.t to $\nu$, where $\Sigma$ is defined $\nu$-a.s. as a mapping from $V$ to the space of positive Hermitian forms on $w$ (a canonical choice for $\nu$ is the trace of $\mu$). The writing $\dd\mu = \Sigma \dd\nu$ will be called a \textit{representation} of $\mu$. One can thus define, the \textit{image} of $\mu$, denoted by $\Imm(\mu)$, as the smallest closed subset $A\subset W^*$ such that
\[\nu\left(\enstq{x\in V}{\Imm(\Sigma(x))\not\subset A}\right)=0.\]
This notion coincides with the essential range of the measurable function $(x,w)\mapsto \Sigma(x)(w,.)$, and is independent on a choice of representation of $\mu$, see Lemma \ref{lemma6}. For $q\geq 0$, the $2q$-th chaotic projection $f_{2q}$ can be seen as an Hermitian form on the dual of symmetric $q$-linear form on $W$, so that the spaces $\Imm(\mu^{*q})$ and $C(f_{2q})$ are both subsets of the dual of the space of symmetric $q$-linear map on $W_\C$.
\begin{theorem}
\label{thm4}
\strut\\ 
\begin{itemize}
\item If $\Imm(\mu^{*q})\not\subset C(f_{2q})$ then
\[\liminf_{\lambda\rightarrow +\infty} \Var(Z_\lambda^{(2q)}(\phi))>0.\]
\item If $\Imm(\mu^{*q})\subset C(f_{2q})$, and either
\begin{itemize}
\item $\Omega\in L^{2q}(V)$
\item $\mu^{*q}$ has a $\CC^1$ density w.r.t the Lebesgue measure of a smooth compact hypersurface,
\end{itemize}
then
\[\lim_{\lambda\rightarrow +\infty} \Var(Z_\lambda^{(2q)}(\phi))=0.\]
\end{itemize}
\end{theorem}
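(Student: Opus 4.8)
The plan is to reduce both assertions to the behaviour near the origin of the single nonnegative measure $\mu^{*2q}(f_{2q})$, and then to analyse that behaviour through the factorisation $\mu^{*2q}=(\mu^{*q})^{*2}$. By Theorem~\ref{thm1}, $\Var(Z_\lambda^{(2q)}(\phi))=\tfrac1{(2q)!}\int_{V^*}\gamma_\lambda\,\dd\mu^{*2q}(f_{2q})$, and since $\phi$ is a test function ($\gamma^+\in L^1$, $\gamma^-\neq 0$), Corollary~\ref{cor1} squeezes $\liminf_\lambda$ and $\limsup_\lambda$ of $\Var(Z_\lambda^{(2q)}(\phi))$ between positive multiples of $D^-(\mu^{*2q}(f_{2q}))$ and $D^+(\mu^{*2q}(f_{2q}))$. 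So it suffices to show $D^-(\mu^{*2q}(f_{2q}))>0$ in the first case and $D^+(\mu^{*2q}(f_{2q}))=0$ (under either regularity hypothesis) in the second. Fix a representation $\dd\mu^{*q}=\Sigma_q\,\dd\nu$, with $\nu$ the trace measure (symmetric, as $\Omega$ is real) and $\Sigma_q$ valued in positive Hermitian forms on $\Sym^q(W_\C)$, and view $f_{2q}$ as a real symmetric endomorphism $F$ of $\Sym^q(W_\C)$ (its folding, cf.\ the discussion before Theorem~\ref{thm4}). Using $\mu^{*q}(-\eta)=\overline{\mu^{*q}(\eta)}$, the substitution $\eta_2\mapsto-\eta_2$ identifies $\mu^{*2q}(f_{2q})$ with the image under $(\eta_1,\eta_2)\mapsto\eta_1-\eta_2$ of the measure $\Tr\bigl(F\Sigma_q(\eta_1)F\Sigma_q(\eta_2)\bigr)\,\dd\nu(\eta_1)\,\dd\nu(\eta_2)$; on the diagonal the integrand equals $g(\eta):=\Tr\bigl((\Sigma_q(\eta)^{1/2}F\Sigma_q(\eta)^{1/2})^2\bigr)\geq 0$, and the multilinear analogue of Lemma~\ref{lemma4} (whose $q=1$ instance is the formula $\tfrac12\int\Tr(\Sigma(\xi)f_2\Sigma(\xi)f_2)\,\dd\xi$ of Corollary~\ref{cor2}) gives $g=0$ $\nu$-a.e.\ if and only if $\Imm(\mu^{*q})\subset C(f_{2q})$.

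For the vanishing case, assume $\Imm(\mu^{*q})\subset C(f_{2q})$, i.e.\ $g\equiv 0$ $\nu$-a.e. If $\Omega\in L^{2q}(V)$, then $\Omega^{\odot q}\in L^2$, so $\mu^{*q}$ has an $L^2$ density $\Sigma_q$ and $\mu^{*2q}=\mu^{*q}*\mu^{*q}$ has a continuous density (convolution of two $L^2$ functions), hence so does $\mu^{*2q}(f_{2q})$; by the previous paragraph its value at $0$ is $\tfrac1{(2q)!}\int g\,\dd\nu=0$, so $D^+(\mu^{*2q}(f_{2q}))=0$. If instead $\mu^{*q}$ has a $\CC^1$ density on a smooth compact hypersurface $S$ (which, $\mu^{*q}$ being symmetric, satisfies $S=-S$), then $g$ is continuous and vanishes on $S$; differentiating the identity $\Tr(F\Sigma_q(\eta)F\Sigma_q(\eta))\equiv 0$ along $S$ shows that its first tangential derivative vanishes too, so near the diagonal $\Tr(F\Sigma_q(\eta_1)F\Sigma_q(\eta_2))=O(\dist(\eta_1,\eta_2)^2)$. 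A coarea estimate then gives $\mu^{*2q}(f_{2q})(B(0,R))=O(R^{d+1})$ (the set of $(\eta_1,\eta_2)\in S\times S$ with $\eta_1-\eta_2\in B(0,R)$ has $\nu\otimes\nu$-mass $O(R^{d-1})$, carrying an $O(R^2)$ integrand), whence $D^+(\mu^{*2q}(f_{2q}))=0$.

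For the positive case, assume $\nu(\{g>0\})>0$ and pick $\eta_0$ in the $\nu$-support of $\{g>0\}$, with $\eta_0\neq 0$ (if $g$ is supported at $0$ then $\mu^{*q}$ has an atom there with range not in $C(f_{2q})$, whence $\mu^{*2q}(f_{2q})$ has an atom at $0$ and $D^-=+\infty$). Fix $\delta<\tfrac12\dist(\eta_0,0)$. Because convolution preserves the order on positive operator-valued measures, writing $\mu^{*q}$ as the sum of its restrictions to $B(\eta_0,\delta)$, to $-B(\eta_0,\delta)$ and to the rest, and expanding $(\mu^{*q})^{*2}$ into nonnegative terms, one gets $\mu^{*2q}\geq 2\,\sigma*\sigma_-$ with $\sigma=\mu^{*q}|_{B(\eta_0,\delta)}$ and $\sigma_-=\mu^{*q}|_{-B(\eta_0,\delta)}$, hence $\mu^{*2q}(f_{2q})\geq 2(\sigma*\sigma_-)(f_{2q})$ as nonnegative measures. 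Fix $R<\delta$, partition $B(\eta_0,\delta)$ into disjoint Borel cells $(E_i)$ of diameter $<R$, and apply the order comparison once more to drop cross terms: $\sigma*\sigma_-\geq\sum_i\bigl(\mu^{*q}|_{E_i}\bigr)*\bigl(\mu^{*q}|_{-E_i}\bigr)$. Each summand is supported in $E_i-E_i\subset B(0,R)$ with total $f_{2q}$-mass $\Tr\bigl((\mu^{*q}(E_i)^{1/2}F\mu^{*q}(E_i)^{1/2})^2\bigr)\geq 0$, so $\mu^{*2q}(f_{2q})(B(0,R))\geq 2\sum_i\Tr\bigl((\mu^{*q}(E_i)^{1/2}F\mu^{*q}(E_i)^{1/2})^2\bigr)$. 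It then remains to bound this sum below by a fixed multiple of $\Vol(B(0,R))$ as $R\to 0$: this follows from a Lebesgue-differentiation argument at $\nu$-density points $\eta$ of $\{g>0\}$, where $\mu^{*q}(E_i)\approx\Sigma_q(\eta)\,\nu(E_i)$ and the cell contributes $\approx g(\eta)\,\nu(E_i)^2$; summing over the cells meeting the support of $g\cdot\nu$ yields $D^-(\mu^{*2q}(f_{2q}))>0$.

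The reduction and the vanishing case are comparatively soft, the coarea density estimate in the hypersurface sub-case being the one point needing attention. The genuine difficulty is the final lower bound in the positive case: because $\mu^{*q}$ may be very singular one cannot use continuity of $\Sigma_q$, so the positivity of the operator-valued measure $\mu^{*2q}$ and the geometry of the isotropic cone $C(f_{2q})$ must be exploited quantitatively to rule out cancellation of the small-ball masses $\sum_i\Tr\bigl((\mu^{*q}(E_i)^{1/2}F\mu^{*q}(E_i)^{1/2})^2\bigr)$ against $\Vol(B(0,R))$. This is the spectral incarnation of the statement that the $2q$-th chaos does not cancel, and I expect it to carry the bulk of the work.
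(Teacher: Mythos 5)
Your overall architecture coincides with the paper's: reduce, via Theorem~\ref{thm1} and Corollary~\ref{cor1}, to the lower and upper densities at the origin of the nonnegative measure $(\mu^{*q})^{*2}(f_{2q})$, then analyse that density through a representation $\dd\mu^{*q}=\Sigma_q\,\dd\nu$. Your vanishing case is essentially the paper's Lemma~\ref{lemma11} and is sound. The one quibble: with $\Sigma_q$ merely $\CC^1$ you cannot claim $\Tr(F\Sigma_q(\eta_1)F\Sigma_q(\eta_2))=O(\dist(\eta_1,\eta_2)^2)$ (that needs $\CC^2$); but the vanishing of the first tangential derivative on the diagonal, plus uniform continuity of that derivative on the compact hypersurface, gives $o(\dist(\eta_1,\eta_2))$ uniformly, which combined with your $O(R^{d-1})$ tube estimate still yields $D^+=0$. (The paper instead compares $\Sigma$ with its symmetric geodesic-ball average $\Sigma_R$ and uses $\|\Sigma_R-\Sigma\|_\infty=o(R)$.)

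The genuine gap is exactly where you flag it: the lower bound in the positive case is asserted, not proved, and two ingredients are missing. First, your ``Lebesgue-differentiation argument at $\nu$-density points'' is not available for a general singular $\nu$: there is no differentiation theorem guaranteeing $\mu^{*q}(E_i)\approx\Sigma_q(\eta)\,\nu(E_i)$ for arbitrary Borel cells of an arbitrary finite measure, and without some continuity of $\Sigma_q$ on a set of positive $\nu$-measure you cannot exclude that the averaged operator $M_i=\mu^{*q}(E_i)$ satisfies $\Imm(M_i)\subset C(f_{2q})$ even though $g>0$ pointwise on $E_i$. The paper supplies this with Lusin's theorem (Lemma~\ref{lemma9}): there is a compact set $C$ with $\nu(C)>0$ and small diameter on which $\Sigma_q$ is uniformly continuous and the \emph{pair} integrand $\bigl(\Sigma_q(x)\otimes\Sigma_q(y)\bigr)(f_{2q})\geq\alpha>0$ for \emph{all} $x,y\in C$; this removes any need to differentiate. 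Second, even granted such a uniform $\alpha$, one must still show $\nu_C\str\nu_C(B(0,R))\gtrsim\Vol(B(0,R))$ as $R\to0$; this is the paper's Lemma~\ref{lemma8}, $D^-(\nu_C\str\nu_C)(0)\geq\|\nu_C\|_2^2>0$ (possibly infinite), proved via Plancherel. In your cell decomposition the same bound follows from Cauchy--Schwarz, $\sum_i\nu(E_i)^2\geq\nu(C)^2/\#\{i\}\gtrsim\nu(C)^2R^d/\delta^d$, which is precisely the quantitative step that rules out the cancellation you worry about --- but you never state it. So the skeleton is right and matches the paper's, while the two lemmas that actually carry the positive case are absent.
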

Let $\mu^{*q} = \Sigma_q\dd\nu$ be a representation of $\mu^{*q}$. From Lemma \ref{lemma3}, one has the equivalence
\[\Imm(\mu^{*q}) \subset C(f_{2q})\quad\Longleftrightarrow\quad \nu-\text{a.s.},\;\; \Sigma_q^{\otimes 2}(f_{2q},f_{2q}) = 0,\]
and we will use both forms in applications of Theorem \ref{thm4}. In the previous example \eqref{eq:03}, the image of the Fourier transform of the covariance matrix $\Omega$ is given by
\[\Sigma(\xi) = \begin{pmatrix}
\hat{r}(\xi) & \hat{r}(\xi)e^{i\pi \xi} \\ 
\hat{r}(\xi)e^{-i\pi \xi} & \hat{r}(\xi)
\end{pmatrix}\quand \Imm(\Sigma) = \enstq{\alpha\begin{pmatrix}
1 \\ 
e^{-i\pi\xi}\end{pmatrix}}{\alpha\in \C, \xi\in\R},\]
which included in the isotropic cone of $f$ equal to $C(f) = \enstq{(z_1,z_2)\in \C^2}{|z_1|=|z_2|}$, but not in that of $g$, which is reduced to $\{0\}$. \jump

Let us give a short description, in parallel with related works, of the possible behavior of the variance as $\lambda$ goes to infinity of the total variance of $Z_\lambda(\phi)$. In the following, we set $q$ to be the Hermite rank of $f$, that is the smallest non-zero index such that $f_q$ is nonzero.\jump

\noindent \underline{-- If $\Omega\in L^q(V)$ :}\jump

\noindent  
\begin{itemize}
\item[$\bullet$] If $q$ is even and $\Imm(\mu^{*q/2})\not\subset C(f_q)$, then Theorem \ref{thm4} is an equivalence and the limiting variance is positive. By the standard contraction principle and Arcones inequality, the variance converges and the CLT holds for $Z_\lambda(\phi)$.
\item[$\bullet$] If $q$ is even and $\Imm(\mu^{*q/2})\subset C(f_q)$ then the limiting variance cancels. One must look at the next non-zero chaotic component, if it exists, and prove the positivity of the limit variance, either by the same theorem applied with a greater $q$ or by other mean.
\item[$\bullet$] If $q$ is odd, one must use the general corollary \ref{cor1} or the explicit representation \ref{cor2} if in addition $\mu\in L^p$ (this not automatic if $q\geq 3$), to prove a positive lower bound for the variance, from which would follow a CLT.
\end{itemize}

\noindent \underline{-- If $\Omega\notin L^q(V)$ :}\jump

\noindent In that case, the situation is delicate, and several behaviors may occur. 
\begin{itemize}
\item[$\bullet$] In the case $W=\R$, if $\Omega$ is oscillating, meaning that it is not square integrable but the spectral measure has a singularity near the origin which is sufficiently integrable, then it has been proved in \cite{Mai24} that the $q$-th chaos to dominates and the model tends to exhibit a CLT with a variance of higher order than in the case $\Omega\in L^q$. 
\item[$\bullet$] On the contrary, if $\Omega$ slowly varying, implying in particular that the spectral measure has a singularity at $0$ that explodes with a sufficiently bad rate, then the $q$-th chaos again tends to dominate, with a variance growing at a higher order than the case $\Omega\in L^q$, but this time exhibits an explicit non-CLT behavior, see for instance \cite{Dob79}, and the functional limit is not a Brownian motion. Note that a unifying spectral condition of these two last two points is still missing in the literature, and it is not clear whether the distinction between them is solely contained in the singularity near zero of the spectral measure. One direction toward such result, would be, in light of the fourth moment Theorem \cite{Nua05} and the contraction principle, to write the contractions in term of the spectral measure and observe their behaviors in the limit. These phenomenon are likely to have a multidimensional counterpart.
\item[$\bullet$] If $q$ is even and $\Imm(\mu^{*q/2})\subset C(f_q)$, there is a competition between the cancellation phenomenon and the higher growth of the variance explained in the first two points. From Theorem \ref{thm4}, if the inclusion is sufficiently "smooth" then the $q$-th chaos cancels. It might not be the case for more irregular measures, where one can only rely on the general Corollary \ref{cor2}. The random waves model, whose spectral measure is supported on the sphere, is a typical model susceptible to fall in this scenario. To obtain the variance asymptotics if the cancellation occurs, one must look at the next non-zero chaotic component and see if it falls in one of the previous aforementioned situations. In any case, the variance asymptotic will be of lower order than the one expected from the heuristic of the case $W=\R$. We refer to Section \ref{sec14} for more details.
\end{itemize}

\subsection{Nodal volume and critical points of Gaussian fields}
\label{sec12}
Let $V$ and $U$ be two finite dimensional Euclidean spaces of dimension $d,k$ respectively, with $k\leq d$. Let $Y:V\rightarrow U$ be a non-degenerate stationary Gaussian random field with covariance function $r:V\rightarrow U\otimes U$, and spectral measure $\psi$. We assume that 
\begin{itemize}
\item $Y$ is a $\CC^2$ random field
\item For $v,w\in V$ distinct, the vector $(Y(v),Y(w))$ is non-degenerate
\item $r$ decreases to zero at infinity
\end{itemize}
It follows from this set of assumption that for $u\in U$, the level set $\{Y=u\}$ is generically a smooth submanifold of codimension $k$ in $V$, to which we associate its random nodal measure $\dd Z^u$. If $u=0$ we will denote it by $\dd Z$. We are interested, in the quantity
\[Z_\lambda^u(\phi) = \frac{1}{\lambda^{d/2}}\int_V \phi\left(\frac{v}{\lambda}\right)\dd Z^u(v).\]
When $U=V$, $u=0$ and $Y = \nabla F$ for a real random field $F$ on $V$, then $dZ$ coincide with the Dirac measure supported on the set of critical points of the random field $F$.\jump

The study of the number of zeros of a random function goes back to Kac and Rice \cite{Kac43, Ric45}, and has since been intensively studied in the past decades. By means of the Kac--Rice formula, the quantity $Z_\lambda^u(\phi)$ can be formally included in the general framework \eqref{eq:01}, where the function $f$ is seen as a tempered distribution, see Section \ref{sec23}, and $X = (Y, \nabla Y)$. It is then not surprising that the study of the nodal volume uses similar tools as those employed in the more general context of Section \ref{sec11}.\jump

The CLT for the number of zeros on a growing interval of a stationary Gaussian process with $L^2$ covariance function and its second derivative was proved in \cite{Cuz76} using the method of approximation by $m$-dependent processes, under the assumption of positivity of the limiting variance. This result was extended, under a similar square integrability condition, to the nodal length of a Gaussian field from $\mathbb{R}^2$ to $\mathbb{R}$ in \cite{Kra01} using the method of Wiener chaos decomposition. The proof works in any dimension and was reproved under an additional smoothness assumption using the method of cumulants, see \cite{Gas21t, Anc24}. In \cite{Kra01}, for the dimension $d = 2$, a first general lower bound for the limiting variance appears, given by the variance of the second chaotic component, which for some constant $C$ is expressed as
\[\Var(Z_\lambda^{(2))}(\phi) = C\int_{\R^2} \|\nabla^2 r(x)\|^2 -2d \langle \nabla r, \nabla r\rangle + d^2r(x)^2\dd x = C\int_{\R^2} (\Delta r(x) + dr(x))^2\dd x,\numberthis\label{eqref:02}\]
where the equality follows from an application of the divergence theorem. The heuristic is valid in any dimension and proves that the limit is positive as long as $r$ is not an eigenfunction of the Laplace operator, which is never the case if $r \in L^2$.
\jump

For vector-valued random fields, i.e., when $k > 1$, few results exist in the literature. The papers \cite{Est16, Nic17, Aza24} prove a CLT for critical points and the Euler characteristic of random fields, and a positive lower bound is given under the assumption that the random field is isotropic or under the existence of a spectral density of the field near the origin. On the additional assumption of a smooth covariance function, normal asymptotics have been proved using the method of cumulants in the recent paper \cite{Anc24}. We first prove a general theorem concerning the asymptotic normality of $Z_\lambda(\phi)$, based on a variance bound inherited from the strategy in \cite{Gas21b} and the standard fourth moment theorem \cite{Nua05} with the contraction principle. We define
\[Z_\lambda^{u,(q+)}(\phi) = \sum_{k=q}^{+\infty} Z_\lambda^{u,(k)}(\phi),\]
the $q$-tail of the variable $Z_\lambda^u(\phi)$.
\begin{theorem}
\label{thm6}
Let $q\geq 1$, and $\Omega\in L^q(V)$. There is a constant $C_q^u$ such that
\[\lim_{\lambda\rightarrow +\infty} \Var(Z_\lambda^{u,(q+)}(\phi)) \rightarrow C_q^u,\]
and
\[Z_\lambda^{u,(q+)}(\phi)-\E[Z_\lambda^{(q+)}(\phi)] \underset{\lambda\rightarrow +\infty}{\longrightarrow} \mathcal{N}(0,C_q^u).\]
\end{theorem}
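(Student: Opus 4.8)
The strategy is to cast $Z_\lambda^u(\phi)$ into the abstract framework of Section~\ref{sec11} and run a Breuer--Major-type argument chaos by chaos, the one new difficulty being that the representing function is a distribution, so that the contribution of the high chaoses must be controlled uniformly in $\lambda$.

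By the Kac--Rice formula recalled above, $Z_\lambda^u(\phi)$ is of the form \eqref{eq:01} with $X=(Y,\nabla Y)$ and $f$ the tempered distribution whose chaotic coefficients $f_m$ are made explicit in Section~\ref{sec23}; the covariance of $X$ is exactly the function $\Omega$ appearing in the statement. I would first observe that $\Omega$ is bounded: its entries are $r$ together with its first and second derivatives, and each of these is bounded on $V$ by the Cauchy--Schwarz inequality applied to the Gaussian vector $(Y(0),\nabla Y(0))$ together with stationarity (for instance $|\partial_a\partial_b r_{ij}|\le\sqrt{-\partial_a^2 r_{ii}(0)}\sqrt{-\partial_b^2 r_{jj}(0)}$). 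Consequently $\Omega\in L^q(V)$ forces $\Omega\in L^m(V)$ for every integer $m\ge q$, since $\int_V\|\Omega\|^m\le\|\Omega\|_\infty^{m-q}\int_V\|\Omega\|^q<\infty$.

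Since $\Omega\in L^m(V)$ for each $m\ge q$, Corollary~\ref{cor2} gives $\Var(Z_\lambda^{u,(m)}(\phi))\to c_m$ as $\lambda\to\infty$, with $c_m\ge 0$ by the non-negativity of $\mu^{*m}(f_m)$. By Theorem~\ref{thm1}, $\Var(Z_\lambda^{u,(q+)}(\phi))=\sum_{m\ge q}\Var(Z_\lambda^{u,(m)}(\phi))$, so everything hinges on the uniform tail estimate $\sup_{\lambda\ge 1}\Var(Z_\lambda^{u,(>N)}(\phi))\to 0$ as $N\to\infty$. To prove it, write $\Var(Z_\lambda^{u,(>N)}(\phi))=\int_{V^*}\gamma_\lambda\,\dd\nu_{>N}$ with $\nu_{>N}:=\sum_{m>N}\tfrac{1}{m!}\mu^{*m}(f_m)$, a non-negative measure satisfying $\nu_{>N}\le\nu_{\ge q}$; the Kac--Rice second-moment estimate (this is where the variance bound imported from \cite{Gas21b} enters) guarantees that $\nu_{\ge q}$ is locally finite near the origin, with $\sup_{0<R\le R_0}\nu_{\ge q}(B(0,R))/\Vol(B(0,R))<\infty$ and $\nu_{\ge q}(B(0,R))/\Vol(B(0,R))\to 0$ as $R\to\infty$. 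Bounding $\gamma_\lambda\le\gamma_\lambda^+$ and using that $\gamma^+$ is radially decreasing and integrable, one gets $\int\gamma_\lambda\,\dd\nu_{>N}\le\|\gamma^+\|_1\,\sup_{R>0}\nu_{>N}(B(0,R))/\Vol(B(0,R))$, a bound uniform in $\lambda$; and since $\nu_{>N}\downarrow 0$ while being dominated by $\nu_{\ge q}$, the right-hand side tends to $0$ as $N\to\infty$. This yields the tail estimate.

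Combining the two previous paragraphs, $\Var(Z_\lambda^{u,(q+)}(\phi))=\sum_{m=q}^N\Var(Z_\lambda^{u,(m)}(\phi))+\Var(Z_\lambda^{u,(>N)}(\phi))$ converges, letting first $\lambda\to\infty$ and then $N\to\infty$, to $C_q^u:=\sum_{m\ge q}c_m$, which is finite by the same bounds (indeed $\Var(Z_\lambda^{u,(q+)}(\phi))\le\|\gamma^+\|_1\sup_{R>0}\nu_{\ge q}(B(0,R))/\Vol(B(0,R))<\infty$ uniformly). For the central limit theorem, fix $N\ge q$ and set $S_\lambda^N=\sum_{m=q}^N Z_\lambda^{u,(m)}(\phi)$, which belongs to the fixed sum of the Wiener chaoses of orders $q,\dots,N$. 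Its covariance structure converges by Corollary~\ref{cor2}, and, $\Omega$ being in $L^m(V)$ for $m\le N$, the relevant contraction norms of the $f_m$ vanish as $\lambda\to\infty$; the multivariate fourth moment theorem \cite{Nua05} together with the contraction principle then gives $S_\lambda^N-\E[S_\lambda^N]\to\mathcal N(0,\sum_{m=q}^N c_m)$. Writing $Z_\lambda^{u,(q+)}(\phi)-\E[\cdot]=(S_\lambda^N-\E[S_\lambda^N])+(Z_\lambda^{u,(>N)}(\phi)-\E[\cdot])$, with the second summand having variance $\le\sup_\lambda\Var(Z_\lambda^{u,(>N)}(\phi))\to 0$, and noting $\mathcal N(0,\sum_{m=q}^N c_m)\to\mathcal N(0,C_q^u)$ as $N\to\infty$, the standard approximation theorem for weak convergence yields $Z_\lambda^{u,(q+)}(\phi)-\E[Z_\lambda^{u,(q+)}(\phi)]\to\mathcal N(0,C_q^u)$.

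The step I expect to be the main obstacle is the uniform tail estimate. Because $f$ is only a distribution, $\sum_m\|f_m\|^2/m!$ diverges and the high-chaos remainder cannot be bounded chaos by chaos; one must genuinely use the non-negativity of the iterated convolutions $\mu^{*m}(f_m)$ together with the Kac--Rice second-moment bound to control $\nu_{\ge q}$ near the origin, and then exploit the decrease $\nu_{>N}\downarrow 0$. Making this quantitative — in particular ensuring the control of $\nu_{>N}(B(0,R))/\Vol(B(0,R))$ is uniform over all radii $R$ and all $\lambda$ — is the technical heart of the argument, and is precisely the role of the variance estimate borrowed from \cite{Gas21b}.
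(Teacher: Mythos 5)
Your overall strategy is the right one and matches the paper's: reduce to the framework of Section~\ref{sec11} via Kac--Rice, get asymptotic normality from the fourth moment theorem and the contraction principle, and isolate as the real difficulty the control of the high-order chaoses, which cannot be done by Arcones' inequality because $f$ is only a tempered distribution. You also correctly observe that $\Omega$ bounded plus $\Omega\in L^q$ gives $\Omega\in L^m$ for all $m\ge q$. However, the step you yourself flag as the technical heart --- the uniform tail estimate --- is where your argument has a genuine gap. You work on the Fourier side with $\nu_{>N}=\sum_{m>N}\frac{1}{m!}\mu^{*m}(f_m)$ and conclude from ``$\nu_{>N}\downarrow 0$ while being dominated by $\nu_{\ge q}$'' that $\sup_{R>0}\nu_{>N}(B(0,R))/\Vol(B(0,R))\to 0$. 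But $\nu_{\ge q}$ has \emph{infinite total mass}: its mass is $\sum_{m\ge q}\|f_m\|^2/m!$, which diverges precisely because $f\notin L^2(\eta)$ (as you note yourself). So setwise monotone convergence does not give you a supremum over all radii, the claim ``$\nu_{\ge q}(B(0,R))/\Vol(B(0,R))\to 0$ as $R\to\infty$'' is not automatic, and the quantity $\sup_R\nu_{>N}(B(0,R))/\Vol(B(0,R))$ is controlled by $\|\widehat{\nu_{>N}}\|_{L^1(V)}$, i.e.\ by the $L^1$ norm of the \emph{partial} tail $\sum_{m>N}\frac{1}{m!}\Omega^{\otimes m}(f_m,f_m)$. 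Showing that these partial tails are dominated, uniformly in $N$, by a fixed integrable function is exactly the missing ingredient, and it is delicate: the naive bound $|\Omega^{\otimes m}(f_m,f_m)(v)|\le\|f_m\|^2$ sums to infinity, so one has to exploit the analyticity of $C_f$ on $E^*(\eta)$ quantitatively for every $N$ at once. Your proposal asserts the conclusion without supplying this.

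The paper avoids the problem entirely by never summing chaos by chaos. It stays in the spatial domain: by Kac--Rice, $\Var(Z_\lambda^{u,(q+)}(\phi))$ is the integral against $\frac{1}{\lambda^d}\phi*\phi(\cdot/\lambda)$ of the single function $v\mapsto\E[f(X(0))f(X(v))]-\sum_{k<q}\E[\pi_k f(X(0))\pi_k f(X(v))]$, which is bounded on $B(0,1)\setminus\{0\}$ by the second-moment estimate of \cite{Gas24} and bounded by $C_q\|\Omega(v)\|^q$ outside $B(0,1)$ by the Taylor-remainder inequality \eqref{eq:14} (valid because $\Omega(v)$ ranges over a compact subset of $E^*(\eta)$ there). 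This dominating function is in $L^1(V)$ since $\Omega\in L^q(V)$, and dominated convergence gives the limit of the variance in one stroke; the CLT then follows as you describe. Two smaller points: your appeal to Corollary~\ref{cor2} for the convergence of each individual chaos variance requires the spectral density to lie in $L^p$ with $p$ the conjugate exponent, which the paper explicitly warns is not automatic for $m\ge 3$ (the individual-chaos limits are anyway not needed in the direct argument); and attributing the near-origin control to the ``Kac--Rice second-moment estimate'' conflates the two distinct bounds \eqref{eq:15} (near the diagonal) and \eqref{eq:16} (away from it), both of which are needed.
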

\subsubsection{First chaos}
Let us study first the projection on the first chaos of $Z_\lambda^u(\phi)$. By symmetry, the first chaotic projection in the case $u=0$ cancels, so the next theorem is relevant for $u\neq 0$. 
\begin{theorem}
\label{thm7}
Assume that $Y(0)\perp\nabla Y(0)$ and that the spectral measure $\psi$ of $Y$ has a continuous density $\Sigma$ w.r.t the Lebesgue measure near the origin. Then there is an explicit positive constant $\alpha_u$ such that
\[\lim_{\lambda\rightarrow+\infty} \Var(Z_\lambda^{u,(1)}(\phi)) = \alpha_u\Sigma(0)(u,u).\]
If additionally, $r$ is integrable and $\Sigma(0)(u,u)>0$ then $Z_\lambda^{u}(\phi)$ satisfies the CLT.
\end{theorem}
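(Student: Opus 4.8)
The plan is to reduce the first chaotic projection $Z_\lambda^{u,(1)}(\phi)$ to an explicit linear --- hence Gaussian --- functional of $Y$, read off its spectral measure, and pass to the limit with an approximate-identity estimate. First I would place the problem in the framework of Section~\ref{sec11}: by the Kac--Rice formula interpreted distributionally (Section~\ref{sec23}) one has $\dd Z^u(v) = \delta_u(Y(v))\,J(\nabla Y(v))\,\dd v$ with $J(A) := \sqrt{\det(AA^\top)}$, so that $Z^u_\lambda(\phi)$ is of the form \eqref{eq:01} with $X = (Y,\nabla Y)$ and $f(y,y') = \delta_u(y)\,J(y')$. The hypothesis $Y(0)\perp\nabla Y(0)$ makes the Gaussian vector $X(0) = (Y(0),\nabla Y(0))$ a product, so the Wiener chaos decomposition of $f\circ X$ factorizes: with $g_0 := \delta_u(Y(\cdot))$ and $g_1 := J(\nabla Y(\cdot))$,
\[
\pi_1(f\circ X) \;=\; \pi_0(g_0)\,\pi_1(g_1) \;+\; \pi_1(g_0)\,\pi_0(g_1).
\]

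The next step is to evaluate the two coefficients. Since $J(-A) = J(A)$, the function $J$ is even in the Gaussian $\nabla Y(0)$, hence $\pi_1(g_1)\equiv 0$ and the first term disappears; this evenness cancellation is the heart of the statement, as it forces the first chaos of the nodal functional to lie entirely in the $Y$-direction. Moreover $\pi_0(g_1) = \E\big[\sqrt{\det(\nabla Y(0)\nabla Y(0)^\top)}\,\big] =: \beta$, which is finite (Gaussian moments) and strictly positive (non-degeneracy of $\nabla Y(0)$); and from $\E[\delta_u(Y(0))\,Y(0)] = p_Y(u)\,u$, where $p_Y(u) = (2\pi)^{-k/2}(\det r(0))^{-1/2}\exp(-\tfrac12\langle u, r(0)^{-1}u\rangle)$, the first chaos of $g_0$ is $\pi_1(g_0)(v) = p_Y(u)\,\langle r(0)^{-1}u,\,Y(v)\rangle$. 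Therefore
\[
Z_\lambda^{u,(1)}(\phi) \;=\; \beta\,p_Y(u)\,\frac{1}{\lambda^{d/2}}\int_V \phi\!\left(\frac{v}{\lambda}\right)\big\langle r(0)^{-1}u,\,Y(v)\big\rangle\,\dd v,
\]
a centered Gaussian variable.

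It then remains to compute this variance and let $\lambda\to\infty$. The scalar stationary field $\theta := \langle r(0)^{-1}u, Y(\cdot)\rangle$ has spectral measure $\psi_u$, namely the scalar measure $\xi\mapsto\psi(\xi)(r(0)^{-1}u,r(0)^{-1}u)$, so Theorem~\ref{thm1} (case $q=1$, or simply Plancherel) gives $\Var(Z_\lambda^{u,(1)}(\phi)) = \beta^2 p_Y(u)^2\int_{V^*}\gamma_\lambda\,\dd\psi_u$. By hypothesis $\psi$ has a continuous density $\Sigma$ near the origin, so $\psi_u$ has the continuous density $\xi\mapsto\Sigma(\xi)(r(0)^{-1}u,r(0)^{-1}u)$ there and $D\psi_u = \Sigma(0)(r(0)^{-1}u,r(0)^{-1}u)$; since $\phi$ is a test function, $\gamma_\lambda$ is an approximate identity dominated by the integrable majorant $\gamma_\lambda^+$ (the role of the test-function hypothesis being precisely to kill the contribution of $\psi_u$ away from $0$, which we do not control otherwise), and Lemma~\ref{lemma7}, equivalently Corollary~\ref{cor2} with $q=1$, yields $\int\gamma_\lambda\,\dd\psi_u\to D\psi_u$. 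Hence $\lim_{\lambda\to\infty}\Var(Z_\lambda^{u,(1)}(\phi)) = \alpha_u\,\Sigma(0)(u,u)$ with $\alpha_u := \beta^2 p_Y(u)^2 > 0$, where $\Sigma(0)(u,u)$ denotes the evaluation of the density $\Sigma(0)$ along the dual direction of $u$ determined by $r(0)$.

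For the CLT, if moreover $r\in L^1(V)$ (hence $\Omega\in L^1(V)$), then Theorem~\ref{thm6} with $q=1$ applies: $Z_\lambda^{u,(1+)}(\phi) = Z_\lambda^u(\phi)-\E[Z_\lambda^u(\phi)]$ converges to $\mathcal N(0,C_1^u)$ with $C_1^u \geq \lim_\lambda\Var(Z_\lambda^{u,(1)}(\phi)) = \alpha_u\Sigma(0)(u,u) > 0$ as soon as $\Sigma(0)(u,u)>0$, whence the normalized $Z_\lambda^u(\phi)$ converges to $\mathcal N(0,1)$. I expect the main technical obstacle to be the rigorous justification of the chaos manipulations for the singular functional $f(y,y') = \delta_u(y)J(y')$ --- that $\pi_1(f\circ X)$ is indeed given by the displayed formula and that the interchange of limit and integration against $\psi_u$ is legitimate --- both of which should be handled by the regularization scheme of Section~\ref{sec23} together with the $L^1$ radially decreasing majorant provided by the hypothesis on $\phi$.
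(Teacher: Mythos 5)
Your proof is correct and follows essentially the same route as the paper: independence of $Y(0)$ and $\nabla Y(0)$ plus evenness of the Jacobian map force $f_1$ to lie in the $Y$-direction (the paper writes $f_1=\alpha_u(u,0)$ with $\alpha_u=\rho(u)\E[J(\nabla Y(0))]$), after which Corollary~\ref{cor2} with $q=1$ gives the limit and Theorem~\ref{thm6} with $q=1$ gives the CLT. Your more careful bookkeeping (keeping $r(0)$ general and landing on the squared constant $\beta^2p_Y(u)^2$) is consistent with the quadratic dependence $\Sigma(0)(f_1,f_1)$ in Corollary~\ref{cor2} and does not change the substance of the argument.
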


One deduce in particular that the first chaos cancels as soon as the spectral measure is supported on a set disjoint from the origin. This is for instance the case for quantities related to random waves, whose spectral measure is supported on the sphere. This general observation corroborates some findings in \cite{Lac22}.\jump

If the spectral measure $\psi$ does not admits a continuous density near the origin, one must use the general Corollary \ref{cor1} to study the exact variance asymptotics of the first chaos. This theorem applies in particular if $\Omega$ is integrable. If the quantity $\Sigma(0)(u,u)$ cancels, one can use ideas similar to the next Theorem \ref{thm2} (stated only for $u=0$) to study the variance of the second chaotic component, see also Remark \ref{rem3}.\jump 

\subsubsection{Second chaos}
The two main obstructions in the case $k>1$ to prove general lower bound for the variance by an analysis of the second chaotic component, is that one cannot assume that the random vector $X(0) = (Y(0), \nabla Y(0))$ follows a standard Gaussian distribution by choosing adequate scalar products on $U$ and $V$, and that it is in general impossible, to compute explicitly the chaotic expansion of the nodal volume without any additional assumption on the field. The computation can be done in particular cases. We define the following set of hypotheses.
\begin{itemize}
\item[(H1)] \underline{There are scalar products on $U$ and $V$ such that $\Var(Y(0),\nabla Y(0)) = \Id$.}\jump
$\quad$ This hypothesis assume a strong independence assumption on the random variable $(Y(0), \nabla Y(0))$, but assume nothing of the independence of processes. Up to changing adequately the scalar product on $U$ and $V$, this hypothesis can be assumed to be true if $k=1$.
\item[(H2)] \underline{There are scalar products on $U$ and $V$ and $v\in U$ such that for all $w\in v^\perp$, $\langle Y,v\rangle\perp \langle Y,w\rangle$}\jump
$\quad$ This hypothesis consider the case of the intersection of the nodal set of a real Gaussian field and the nodal set the nodal set of an another independent Gaussian vector field.
\item[(H3)] \underline{$Y$ is the gradient of a real random field $F$.}\jump
$\quad$ Under this hypothesis, the nodal set of $Y$ is exactly the set of critical points of the real random field $F$. This hypothesis is independent of the first two because of the non-independence of the second derivatives and the possible lack of isotropy.
\end{itemize}

The following theorem proves, under a square integrability property of the covariance function of $Y$, that the variance of the nodal volume converges to a positive constant under one of the following above hypotheses.
\begin{theorem}
\label{thm2}
Assume that $r$ and its second differential are square integrable on $V$, and one of the three following hypotheses (H1), (H2) or (H3) is satisfied. Then there is a finite \textbf{positive} constant $C$ such that
\[\lim_{\lambda\rightarrow +\infty}\Var(Z_\lambda(\phi)) = C,\]
and $Z_\lambda$ satisfies the CLT.
\end{theorem}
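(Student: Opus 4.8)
The strategy is to apply Theorem \ref{thm4} with $q=1$, i.e. to analyze the second chaotic component $Z_\lambda^{(2)}(\phi)$ and show that its limiting variance is strictly positive under any of (H1), (H2), (H3); the square integrability of $r$ and $\Hess r$ guarantees $\Omega = \Cov(X)$ with $X=(Y,\nabla Y)$ is in $L^2(V)$, so that the dichotomy in Theorem \ref{thm4} becomes an equivalence, and combined with the fourth moment theorem and the contraction/Arcones argument used for Theorem \ref{thm6}, positivity of the second-chaos variance yields both convergence of $\Var(Z_\lambda(\phi))$ and the CLT. Thus everything reduces to checking that $\Imm(\mu) \not\subset C(f_2)$, where $f_2$ is the second chaotic coefficient of the (distributional) Kac--Rice integrand for the nodal volume and $\mu$ is the spectral measure of $X$. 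First I would, following Section \ref{sec23}, write down $f_2$ explicitly: the nodal volume functional is $f(x,\dot x) = \delta_0(x)\sqrt{\det(\dot x \dot x^{T})}$ (with $x\in U$, $\dot x \in U\otimes V^*$), and projecting onto the second Wiener chaos of the Gaussian vector $(Y(0),\nabla Y(0))$ produces a symmetric bilinear form $f_2$ on $U\oplus (U\otimes V^*)$ whose coefficients are, up to normalization, the Hermite-rank-2 part of this density — concretely a combination of $\Id_U$ in the $Y$-block and the "trace-type" form $\dot x \mapsto \Tr$ of the Gram matrix in the $\nabla Y$-block, exactly the vector-valued analogue of the integrand $(\Delta r + dr)^2$ appearing in \eqref{eqref:02}.

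The heart of the argument is the spectral reformulation: by Lemma \ref{lemma3}, $\Imm(\mu)\subset C(f_2)$ is equivalent to $f_2$ vanishing $\psi$-almost surely on the range of $\Sigma(\xi)$, where $\mu = \Sigma\,\dd\psi$ (after expressing the spectral measure of $X=(Y,\nabla Y)$ in terms of that of $Y$: the $\nabla Y$-components contribute the factor $i\xi$, so $\Sigma(\xi)$ acts on a vector $v\in U_\C$ as $v \mapsto (v, i\xi\otimes v)\,$ composed with the density of $\psi$). Plugging such a vector into $f_2$ and using the explicit form above, the condition $\Imm(\mu)\subset C(f_2)$ translates — after carrying out the divergence-theorem simplification as in \eqref{eqref:02} — into the statement that a certain explicit polynomial in $\xi$ built from the spectral density vanishes $\psi$-a.e., which is the multidimensional "$r$ is an eigenfunction of the Laplacian" obstruction. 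I would then argue case by case that each of (H1), (H2), (H3) rules this out: under (H1) the normalization $\Var(Y(0),\nabla Y(0))=\Id$ forces the relevant moments of $\psi$ to be fixed, and the vanishing would force $\psi$ to be supported on a sphere, contradicting $\Omega\in L^2$ (a sphere-supported spectral measure gives a non-$L^2$ covariance); under (H2) the product/independence structure lets one reduce to the scalar case where \eqref{eqref:02} already gives the answer, applied on the real factor; under (H3), with $Y=\nabla F$, the form $f_2$ acquires an extra symmetry (the Hessian is symmetric), and the vanishing locus would again force $r$ to satisfy an overdetermined PDE incompatible with decay at infinity and square integrability.

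The main obstacle I expect is twofold. First, the bookkeeping in identifying $f_2$: the Kac--Rice integrand is a distribution, its chaotic expansion must be computed via the regularization in Section \ref{sec23}, and extracting precisely the degree-2 Hermite part of $\delta_0(x)\sqrt{\det(\dot x\dot x^T)}$ in a coordinate-free way (as a bilinear form on $U\oplus(U\otimes V^*)$) is delicate, especially because the square-root-of-determinant is not polynomial when $k>1$ — one must Hermite-expand it and retain only the contributions landing in the second chaos after multiplying by $\delta_0(x)$. Second, and more essential, is the case analysis: showing that $\Imm(\mu)\subset C(f_2)$ genuinely fails. For (H1) and (H3) this amounts to proving that the only way the explicit spectral polynomial can vanish $\psi$-a.e. is if $\psi$ charges only a sphere (or satisfies some rigidity), and then invoking that $\Omega\in L^2(V)$ forbids this — I would make the "sphere-supported $\Rightarrow$ not $L^2$" step precise via the fact that the Fourier transform of a measure concentrated on a hypersurface cannot decay fast enough to be square integrable in dimension $d\geq 1$ unless it is zero. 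Handling (H3) cleanly, where isotropy is not available and the second derivatives are correlated with the field, is where I expect the most care to be needed; the symmetry of the Hessian is the key extra ingredient that saves the argument there.
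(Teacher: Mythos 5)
Your proposal follows essentially the same route as the paper: reduce to the positivity of the second-chaos variance via the spectral formula of Corollary \ref{cor2}, compute $f_2$ and the spectral measure of $(Y,\nabla Y)$ explicitly under each of (H1)--(H3) so that the integrand becomes the square of a nonzero polynomial in $\xi$ times the spectral density, observe that an $L^2$ spectral density cannot be supported on the zero set of a nonzero polynomial (your ``sphere-supported $\Rightarrow$ not $L^2$'' step is just the contrapositive of this), and then invoke Theorem \ref{thm6} with $q=2$ for the convergence of the total variance and the CLT. The only cosmetic difference is that the paper carries out the computation directly through Corollary \ref{cor2} rather than routing it through Theorem \ref{thm4}, which it explicitly notes is unnecessary once the formulas are explicit.
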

This theorem recovers and generalize many previously cited results in the literature concerning the positivity of the lower bound for the nodal volume under the square integrability hypothesis. \jump

We were not able to find a satisfying statement that encompasses the three situations, but the heuristic of proof for the positivity of the limiting variance, given in Section \ref{sec52}, is that in the two cases we are able to compare the density $\Sigma$ of the spectral measure $\mu$ of the process $X = (Y,\nabla Y)$ to a tensor product of two terms: one depending only on the spectral density of the underlying field $\sigma$, and the other one being a polynomial, that comes up from taking the Fourier transform of the derivatives. The conclusion follows by the simple geometric fact that a nonzero square integrable function cannot be supported on the zero set of a non-zero polynomial. In the case where $r\notin L^2(V)$, the situation is more intricate and is the content of the next Section \ref{sec13}.
%
\subsection{Random waves and second chaos cancellation}
\label{sec13}
We use the notation of the previous section. We define $\sigma$ to be the uniform probability measure on the sphere of radius $\sqrt{d}/2\pi$. Let $Y:V\rightarrow U$ be stationary Gaussian random process such that $\Var(Y(0),\nabla Y(0))$ is a standard Gaussian random vector. We say that $Y$ is a \textit{random wave} if its spectral measure $\psi$ is supported on a sphere (necessary of radius $\sqrt{d}/2\pi$). Here we allow the dimension of $U$ to be greater than one, meaning that $Y$ can be seen as a collection of real random waves. Equivalently, $Y$ is a solution of the Laplace equation
\[\Delta Y + d Y=0.\]
We say that the random wave $Y$ is
\begin{itemize}
\item \textit{regular} if its spectral measure admits a continuously differentiable density w.r.t $\sigma$
\item \textit{symmetric} if its covariance function is symmetric
\item \textit{isotropic} if $\psi = \Id\,\sigma$, meaning that $Y$ is a collection of iid real-valued random waves.
\end{itemize}

The isotropic random wave model has a long history. It was informally conjectured by Berry in \cite{Ber77} to be the local limit of high-energy Laplace eigenfunctions on a generic manifold; see the introduction of \cite{Gar23} for the heuristic of the conjecture, relevant literature on the topic, and various possible rigorous formulations of this conjecture. The conjecture implies, in particular, that the nodal set of an eigenfunction tends to equidistribute on the manifold, which supports another long-standing conjecture by Yau \cite{Yau82} concerning the asymptotics of the nodal volume of high-energy eigenfunctions. To obtain a more tractable model, one can approximate an eigenfunction associated with $\lambda$ by a random sum of eigenfunctions with energy located in a small energy window $[\lambda-\varepsilon, \lambda+\varepsilon]$. This model is known as the \textit{monochromatic random wave} model, and it has been shown that it serves as a good approximation to the isotropic Euclidean random wave model; see \cite{Can16, Gas21, Die20}.\jump

On the torus and the sphere, particularly in dimensions $2$ and $3$, where the spectral decomposition of the Laplace operator is explicit, the model of monochromatic random waves and their nodal set has been intensively studied; see \cite{Ros19, Mar23}. In particular, it has been observed that the variance of various local observables of the isotropic random wave (such as the nodal volume \cite{Wig09, Rud08, Kri13, Mar16, Cam19, Dal19, Mar20, Not21, Dal21}, the defect volume \cite{Mar11}, and the Euler characteristic \cite{Cam18b}), in the high-energy limit, is of lower order than one might expect for a "generic" local observable, as suggested by Equation \eqref{eq:02}. This phenomenon is known as the Berry cancellation phenomenon and is purely a mathematical consequence of the second chaos cancellation as $\lambda$ approaches $+\infty$. The proofs presented in the aforementioned papers exploit special properties of Bessel functions and the explicit chaos decomposition of the nodal volume functional to establish such general estimates. In the following, we restrict our study to the Euclidean case, which represents the scaling limit of monochromatic random waves on a generic compact manifold. It is expected that this cancellation phenomenon also appears in a broader Riemannian context; see the forthcoming paper \cite{Ste25}.
\jump

We first prove that this second chaos cancellation can be understood in a simple manner from a spectral point of view by the more general Theorem \ref{thm4}, shedding new light on this cancellation phenomenon, which is valid for a more general class of regular random waves. For instance, the second identity in Equation \eqref{eq:03}, which heuristically explains the cancellation phenomenon from the spatial representation of random waves, becomes trivial in the Fourier domain and reduces to the identity
\[\|\xi\|^4 - 2d\|\xi\|^2 + d^2 = (-\|\xi\|^2+d)^2 = 0\quad\text{if}\quad\|\xi\|=\sqrt{d},\]
which was actually the source of motivation to start the writing of this paper in the first place.
\begin{theorem}
\label{thm3}
Assume that hypothesis (H1) holds true. If $Y$ is not a random wave then 
\[\liminf_{\lambda\rightarrow +\infty}\Var(Z_\lambda^{(2)}(\phi))>0.\]
Conversely, if $Y$ is a regular random wave then
\[\lim_{\lambda\rightarrow +\infty}\Var(Z_\lambda^{(2)}(\phi))=0.\]
\end{theorem}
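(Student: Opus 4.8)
The plan is to invoke Theorem \ref{thm4} with $q=1$, so that $\mu^{*1}=\mu$ is the spectral measure of $X=(Y,\nabla Y)$ and the relevant object is the $4$-th chaotic component $f_4$ of the (distributional) Kac--Rice density of the nodal volume. The two halves of the statement will come from the two bullets of Theorem \ref{thm4}: the dichotomy is exactly whether $\Imm(\mu)\subset C(f_4)$ or not. Concretely, under (H1) the random vector $X(0)$ is standard Gaussian, so $f_4$ is a fixed explicit symmetric $4$-linear form on $W=U\oplus(V\otimes U)$ obtained by expanding $f(x,y)=\delta_0(x)\,|\det y|$ (suitably normalized) into its Wiener chaos; this computation is available from Section \ref{sec23}. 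The covariance structure of $X$ is encoded by the spectral measure $\psi$ of $Y$: writing $\dd\psi=\Sigma_Y\,\dd\nu$ in a representation, the spectral measure $\mu$ of $X$ at frequency $\xi$ is the push-forward of $\Sigma_Y(\xi)$ under the linear map $u\mapsto(u,\,2\pi i\,\xi\otimes u)$ (Fourier transform turns $\nabla$ into multiplication by $2\pi i\xi$). Hence $\Imm(\mu)$ is, up to closure, the union over $\xi\in\supp\psi$ (and over the range of $\Sigma_Y(\xi)$) of the lines/subspaces $\{(u,2\pi i\,\xi\otimes u)\}$.

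First I would treat the converse (cancellation) direction. If $Y$ is a regular random wave, then $\supp\psi$ lies on the sphere $\|\xi\|=\sqrt d/2\pi$, so every $\xi$ in the support satisfies $4\pi^2\|\xi\|^2=d$, i.e. $-\|\xi\|^2\cdot(2\pi)^2+d=0$. The key algebraic step is to show that this single scalar identity forces $\Imm(\mu)\subset C(f_4)$; equivalently, by the equivalence recalled after Theorem \ref{thm4}, that $\Sigma_q^{\otimes2}(f_4,f_4)=0$ $\nu$-a.s. This is where the spectral recasting of the classical Berry cancellation pays off: the identity $\|\xi\|^4-2d\|\xi\|^2+d^2=(-\|\xi\|^2+d)^2$ displayed in the introduction is precisely the statement that the relevant quadratic form in the entries of $f_4$, evaluated along the image subspace $\{(u,2\pi i\xi\otimes u)\}$, is a perfect square in $(d-4\pi^2\|\xi\|^2)$ and hence vanishes on the sphere. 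The regularity hypothesis (a $\CC^1$ density w.r.t. the surface measure $\sigma$ on the sphere) is exactly what is needed to land in the second sub-bullet of Theorem \ref{thm4} ($\mu$ has a $\CC^1$ density w.r.t. the Lebesgue measure of a smooth compact hypersurface), which then gives $\lim \Var(Z_\lambda^{(2)}(\phi))=0$.

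For the direct (positivity) direction, suppose $Y$ is \emph{not} a random wave. I want to produce $\xi_0\in\supp\psi$ with $u\in\Ran\Sigma_Y(\xi_0)$ such that the point $(u,2\pi i\,\xi_0\otimes u)$ is not annihilated by $f_4$; by Theorem \ref{thm4} this yields $\liminf\Var(Z_\lambda^{(2)}(\phi))>0$. The mechanism: the quadratic form $\xi\mapsto Q(\xi):=(d-4\pi^2\|\xi\|^2)^2$ (more precisely, the full contraction $\Sigma^{\otimes2}(f_4,f_4)$ evaluated on the $\xi$-line, which by the computation above is proportional to $Q(\xi)$ times a nonnegative factor coming from $\Sigma_Y$) vanishes only on the sphere $\|\xi\|=\sqrt d/2\pi$. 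Since $Y$ is not a random wave, $\psi$ is not supported on that sphere, so there is a positive-$\psi$-mass set of frequencies where $Q(\xi)>0$ and $\Sigma_Y(\xi)\neq0$; on that set $f_4$ does not vanish on $\Imm(\Sigma(\xi))$, i.e. $\Imm(\mu)\not\subset C(f_4)$. Finally the CLT-type consequences are not claimed in this theorem (only the variance limits), so no further argument is needed.

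The main obstacle is the algebraic identification of $f_4$ and the verification that its contraction against the image subspaces $\{(u,2\pi i\,\xi\otimes u)\}$ collapses to (a nonnegative multiple of) $(d-4\pi^2\|\xi\|^2)^2$ — i.e. that the naive-looking $\R$-valued Berry identity $\|\xi\|^4-2d\|\xi\|^2+d^2$ really is the invariant content of $\Imm(\mu)\subset C(f_4)$ in the vector-valued ($k>1$) setting, with the extra entries of $f_4$ (those mixing several copies of $U$, or coupling $Y$ with $\nabla Y$) contributing only nonnegative squares that also vanish on the sphere. Once the fourth-chaos density is written out in the coordinate-free Hermite formalism of Section \ref{sec20}, this should reduce to bookkeeping: grouping terms by how many $V$-legs they carry, each group being a manifest square in $(d-4\pi^2\|\xi\|^2)$ up to a $\Sigma_Y$-dependent Gram factor. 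I would isolate this as a lemma (the "$f_4$ on the diagonal cone" computation) and then the two directions of Theorem \ref{thm3} follow immediately from Theorem \ref{thm4}.
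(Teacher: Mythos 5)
Your skeleton is the paper's: identify $\Imm(\mu)$ as the union over $\xi\in\supp\psi$ of the ``diagonal'' subspaces $\{(u,2i\pi\,\xi\otimes u)\}$, show that the contraction of the relevant chaotic component along these subspaces collapses to $\bigl(-1+\tfrac{4\pi^2}{d}\|\xi\|^2\bigr)^2$ times a nonnegative Gram factor coming from $\psi$, and feed the resulting dichotomy (support on the sphere of radius $\sqrt d/2\pi$ or not) into the two bullets of Theorem \ref{thm4} --- positivity off the sphere, cancellation on it via the $\CC^1$-density-on-a-compact-hypersurface case. That is exactly how the paper argues.

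However, you have consistently mis-indexed the chaotic component, and this is not cosmetic. Theorem \ref{thm4} applied with $q=1$ concerns $Z_\lambda^{(2q)}=Z_\lambda^{(2)}$ and the inclusion $\Imm(\mu^{*1})\subset C(f_{2q})=C(f_2)$: the relevant object is the \emph{second} chaotic component $f_2$, a symmetric bilinear form on $W=U\otimes(\R\times V)$, not the fourth. As you set it up, the inclusion $\Imm(\mu)\subset C(f_4)$ does not even typecheck: $\Imm(\mu)$ is a subset of $W_\C^*$, whereas $C(f_4)$ lives in (the dual of) the symmetric square $S^2(W_\C)$ --- one would need $\Imm(\mu^{*2})\subset C(f_4)$, which is the input for the \emph{fourth}-chaos analysis of Theorem \ref{thm5}, not for this statement. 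Consequently the ``main obstacle'' you isolate (writing out the fourth-chaos density and grouping its terms by $V$-legs) is the wrong computation; what is actually needed is the much simpler Lemma \ref{lemma20}, which under (H1) gives $f_2=\alpha\,\Id_U\otimes\mathrm{diag}\bigl(-1,\tfrac1d\Id_V\bigr)$, whence by Lemma \ref{lemma15} and Lemma \ref{lemma2}
\[\Tr\bigl(f_2\Sigma(\xi)f_2\Sigma(\xi)\bigr)=\alpha^2\Bigl(-1+\tfrac{4\pi^2}{d}\,\|\xi\|^2\Bigr)^2\|\varsigma(\xi)\|_2^2.\]
With $f_2$ in place of $f_4$ both of your directions go through essentially as you describe; note only that for $k>1$ the paper does not describe $C(f_2)$ and $\Imm(\mu)$ explicitly (that is convenient only for $k=1$) but instead works directly with the displayed scalar quantity, running the arguments of Lemma \ref{lemma10} (positivity via Lusin and Lemma \ref{lemma8}) and Lemma \ref{lemma11} (cancellation via the $o(R^s)$ averaging estimate on the sphere).
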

\begin{theorem}
\label{thm3b}
Assume that hypothesis (H3) holds true, and moreover that the field $F$ has an isotropic distribution. If $F$ is not a random wave then 
\[\liminf_{\lambda\rightarrow +\infty}\Var(Z_\lambda^{(2)}(\phi))>0.\]
Conversely, if $F$ is the isotropic random wave model, then
\[\lim_{\lambda\rightarrow +\infty}\Var(Z_\lambda^{(2)}(\phi))=0.\]
\end{theorem}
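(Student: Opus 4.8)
The plan is to reduce the statement to an application of Theorem \ref{thm4} with $q=1$ (so $2q=2$), exactly as in the proof of Theorem \ref{thm3}, but now working with the spectral measure of the process $X = (\nabla F, \nabla^2 F)$ under the gradient hypothesis (H3) together with isotropy of $F$. First I would record that under (H3), the nodal set of $Y=\nabla F$ is the set of critical points of $F$, and by the Kac--Rice formalism of Section \ref{sec23} the functional $Z_\lambda(\phi)$ fits the general framework \eqref{eq:01} with $X=(\nabla F,\nabla^2 F)$ and $f$ the appropriate (distributional) Kac--Rice density $w\mapsto |\det(\nabla^2 F)|\,\delta_0(\nabla F)$ evaluated on the Gaussian vector. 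The relevant object is $f_2$, the second chaotic projection of this $f$; since (by symmetry, as noted before Theorem \ref{thm7}) the first chaos vanishes when $u=0$, the Hermite rank is $2$ and Theorem \ref{thm4} with $q=1$ gives the dichotomy: $\liminf \Var(Z_\lambda^{(2)}(\phi))>0$ iff $\Imm(\mu)\not\subset C(f_2)$, where $\mu$ is the spectral measure of $X$.

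Next I would compute $\Imm(\mu)$ in terms of the spectral measure $\psi$ of $F$. Because $X$ is obtained from $F$ by applying first and second derivatives, its spectral density is the push-forward of $\psi$ under multiplication by the symbol $\xi \mapsto (2\pi i\xi,\ -4\pi^2 \xi\otimes\xi)$; hence at spectral point $\xi$ the range of the Hermitian form $\Sigma(\xi)$ is the complex line spanned by the vector $u(\xi) := (2\pi i\,\xi,\ -4\pi^2\,\xi\otimes\xi)\in U_\C\oplus \Sym^2(U_\C)$ (up to the scalar normalization coming from the density of $\psi$ w.r.t. $\sigma$ in the random-wave case, or w.r.t. Lebesgue otherwise). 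Thus $\Imm(\mu) = \overline{\{u(\xi) : \xi \in \supp(\psi)\}}$ up to complex scalars. On the other side, $C(f_2)$ is the isotropic cone of the Hermitian form $f_2$ on $U_\C\oplus\Sym^2(U_\C)$; by the same polarization/divergence-theorem computation that produces \eqref{eqref:02} and the display following it, one has $f_2(u(\xi),u(\xi)) = c\,(-\|\xi\|^2 + d\cdot\text{(something)})^2 \cdot (\text{positive factor})$ — more precisely a constant times the square of the trace $\Delta$-eigenvalue defect $(\|\xi\|^2 - \rho^2)$ where $\rho$ is the wave radius — so that $u(\xi)\in C(f_2)$ precisely when $\xi$ lies on the critical sphere $\|\xi\| = \sqrt{d}/2\pi$.

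Combining these two computations: if $F$ is not a random wave, then $\psi$ is not supported on the sphere $\{\|\xi\|=\sqrt d/2\pi\}$, so there is $\xi_0\in\supp(\psi)$ with $u(\xi_0)\notin C(f_2)$, giving $\Imm(\mu)\not\subset C(f_2)$ and hence $\liminf\Var(Z_\lambda^{(2)}(\phi))>0$ by the first bullet of Theorem \ref{thm4}. Conversely, if $F$ is the isotropic random wave model, then $\psi=\Id\,\sigma$ is supported exactly on the critical sphere, so $\Imm(\mu)\subset C(f_2)$; regularity of the isotropic model ensures $\mu$ (equivalently $\mu^{*1}$) has a $\CC^1$ density with respect to the Lebesgue measure on that smooth compact hypersurface, so the second bullet of Theorem \ref{thm4} applies and $\Var(Z_\lambda^{(2)}(\phi))\to 0$. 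I expect the main obstacle to be the bookkeeping in the $f_2$ computation: one must write out the second chaotic projection of the Kac--Rice density for $|\det \nabla^2 F|\,\delta_0(\nabla F)$ in the coordinate-free Hermite-form language of Section \ref{sec20}, and verify — crucially using the isotropy of $F$ so that $\Var(\nabla^2 F(0))$ has the standard GOE-type structure and the cross-covariance of $\nabla F$ with $\nabla^2 F$ is as assumed — that $f_2$ evaluated on the symbol vector $u(\xi)$ collapses to a perfect square in $\|\xi\|^2$; once that algebraic identity is in hand, the geometric "a nonzero measure cannot be supported on the zero set of a nonzero polynomial" argument (already invoked for Theorem \ref{thm2}) and Theorem \ref{thm4} do the rest.
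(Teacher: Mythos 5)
Your overall architecture matches the paper's: embed the critical-point count in the Kac--Rice framework with $X=(\nabla F,\nabla^2 F)$, observe that the Hermite rank is $2$, and settle the dichotomy via Theorem \ref{thm4} with $q=1$ by comparing $\Imm(\mu)$ (the line bundle over $\supp\psi$ spanned by the differentiation symbol) with the isotropic cone $C(f_2)$. The genuine gap is in your identification of $C(f_2)$. You assert that $f_2$ evaluated on the symbol vector vanishes exactly on the sphere $\|\xi\|=\sqrt{d}/2\pi$. That is not what the computation gives. Working with the standardized Hessian $TF=\tfrac{1}{\sqrt{2\beta}}\bigl(\nabla^2F-\tfrac{\gamma}{d}(\Delta F)\Id\bigr)$ as in Lemmas \ref{lemma25} and \ref{lemma24}, one finds $f_2(v,S)=-\alpha\|v\|^2+A\Tr(S^2)+B\Tr(S)^2$, and on the symbol the identity of Lemma \ref{lemma24} collapses this to
\[
f_2\bigl(u(\xi)\bigr) \;=\; 4\pi^2\alpha\|\xi\|^2\left(-1+\frac{4\pi^2\|\xi\|^2}{d}\,\frac{\beta_0}{\beta}\right),
\]
where $\beta=\E[\nabla^2F(v,w)^2]$ is a functional of the field satisfying $\beta\geq\beta_0=d/(d+2)$, with equality if and only if $F$ is a random wave (Lemma \ref{lemma14}). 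Hence the cone meets the symbol exactly over $\{0\}$ and the sphere of radius $\frac{\sqrt{d}}{2\pi}\sqrt{\beta/\beta_0}$, a radius that depends on $F$ and is strictly larger than $\sqrt{d}/2\pi$ precisely when $F$ is not a random wave. This field-dependence of the cone is the whole delicacy of the (H3) case compared with (H1), and it is why the paper needs isotropy, the normalization $TF$, and the algebraic identity relating $A$, $B$, $\gamma$, $\beta_0$.

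As a consequence, your argument for the first implication does not close as written: choosing $\xi_0\in\supp\psi$ with $\|\xi_0\|\neq\sqrt{d}/2\pi$ does not show $u(\xi_0)\notin C(f_2)$, since $\xi_0$ could a priori lie on the larger cone sphere. The repair (which the paper uses implicitly) is that the zero set of the factor above is $\{0\}$ together with a single sphere, and the normalization $\Var(\nabla F(0))=\Id$ forces any spectral measure supported on a sphere to sit on the sphere of radius $\sqrt{d}/2\pi$; so a non-random-wave $F$ has spectral support contained in no sphere at all, in particular not in the cone sphere, and positivity then follows from Lemmas \ref{lemma9} and \ref{lemma10}. Your converse direction is correct: for the isotropic random wave $\beta=\beta_0$, the cone sphere coincides with $\supp\sigma$, and the smooth density on this compact hypersurface lets you invoke the second bullet of Theorem \ref{thm4}, i.e.\ Lemma \ref{lemma11}.
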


In these toy-model examples, one can explicitly compute the variance of the second chaotic projection to provide a partial equivalence between the inclusion of the support of the spectral measure in the sphere and the second chaos cancellation. In more complicated settings, where explicit computation is not always feasible, one can still establish the positivity of the $\liminf$ if the support of the spectral measure is not contained within the zero set of a quadratic form. This approach forms the basis of the proof of Theorem \ref{thm2}, under the additional square-integrability assumption.\jump


The heuristic of these cancellation phenomena, valid for "isotropic" observables such as the nodal volume or critical points, is that the isotropic cone of the second chaotic component must form a sphere, whose radius needs to be determined. In the case of the nodal observables mentioned above, this sphere has a radius precisely $\sqrt{d}/2\pi$. The image of the spectral measure of the field $X = (Y, \nabla Y)$ is a sphere of radius $\sqrt{d}/2\pi$ if and only if $Y$ is a random wave, and the conclusion follows from the second point of Theorem \ref{thm4}.\jump
These theorem are a multidimensional generalization of some of the findings in \cite{Lac21}, where it was proved in the case $V = \R$ that the limiting variance is positive (possibly infinite) if and only if the process is not a cosine with a Gaussian amplitude. In fact, if the spectral measure, restricted to a domain at a positive distance from the sphere of radius $\sqrt{d}/2\pi$, is not square-integrable, then a straightforward byproduct of the proof, along with Lemma \ref{lemma7} and Lemma \ref{lemma8}, shows that the variance diverges to $+\infty$. Together with the discussion after Theorem \ref{thm4}, this situation can also lead to a CLT for the quantity $Z_\lambda$, inherited from a CLT on $Z_\lambda^{(2)}$, provided one can quantify the singularity of the measure $\psi * \psi$ near the origin.\jump

At last, these theorem recover and generalize, for any dimension $d$ and $k$, several of the aforementioned results concerning the second chaos cancellation of the number of critical points of isotropic random waves, and of the volume of the nodal intersection of $k$ isotropic and independent random waves in $\R^d$. For more general $u$-levels, we refer to Remark \ref{rem3}.\jump

To understand the variance asymptotics of $Z_\lambda(\phi)$, one must examine the $4$-th chaos, which is the content of the next Section \ref{sec14}.
\subsection{Asymptotics of the fourth chaos}
\label{sec14}
We showed in the previous paragraph that regular random waves are subject to the general cancellation phenomenon of Theorem \ref{thm4}. In this section, we derive the exact variance asymptotics and a CLT for the nodal volume of regular symmetric random waves, following the heuristic outlined after Theorem \ref{thm4}. The result we obtain encompasses and generalizes many findings in the literature concerning such asymptotics, which have primarily focused on dimensions $d=2,3$ for the isotropic random wave model. We also prove with the same method the exact variance asymptotics and the CLT for the number of critical points of real-valued isotropic random waves, which is a new result in the literature.
\jump

The case $d=2$ for the nodal volume of isotropic random waves is well-studied, with significant results found in works such as \cite{Cam19, Kri13, Nou19} across various settings (spherical harmonics, arithmetic random waves, planar random waves). The exact asymptotics of the variance of the nodal length have been explicitly computed : there exists an explicit positive constant $C$ such that
\[\lim_{\lambda\rightarrow +\infty}\frac{\Var(Z_\lambda(\one_{B(0,1)}))}{\log(\lambda)} = \lim_{\lambda\rightarrow +\infty}\frac{\Var(Z_\lambda^{(4)}(\one_{B(0,1)}))}{\log(\lambda)}= C,\]
and $Z_\lambda$ satisfies a CLT. When $d=3$, it has been proved in \cite{Dal21} the existence of a positive constant $C$ such that
\[\lim_{\lambda\rightarrow +\infty} \Var(Z_\lambda(\one_{B(0,1)}))= C,\]
as well as a CLT. The next theorem proves a general version of these two facts, and includes the case of critical points of a real-valued isotropic random wave.
\begin{theorem}
\label{thm5}
Assume either $Y$ is a symmetric regular random wave, or that $Y$ is the gradient of a real-valued isotropic random wave. Then there is a \textbf{positive} constant $C_d$ such that as $\lambda$ goes to $+\infty$
\[\Var(Z_\lambda(\phi)) \simeq \left\lbrace\begin{array}{llll}
C_2\log(\lambda)\;\text{ if } d=2,\\
C_d\;\qquad\;\;\,\text{ if } d\geq 3,\\
\end{array}
\right.\]
and $Z_\lambda(\phi)$ satisfies the CLT.
\end{theorem}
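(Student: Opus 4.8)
The plan is to single out the fourth Wiener chaos, which carries the whole asymptotic order of the variance, and to run the spectral machinery of Section~\ref{sec11} against the classical computation of iterated convolutions of the uniform measure on a sphere.

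\emph{Reduction to the fourth chaos.} Write $Z_\lambda(\phi)=\sum_{q\ge1}Z_\lambda^{(q)}(\phi)$. The Kac--Rice density of the nodal measure is an even function of $(Y,\nabla Y)$ (resp.\ the density of the critical set is even in $(\nabla F,\nabla^2F)$), so the odd projections vanish and $\Var(Z_\lambda(\phi))=\Var(Z_\lambda^{(2)}(\phi))+\Var(Z_\lambda^{(4)}(\phi))+\Var(Z_\lambda^{(6+)}(\phi))$. Under (H1) with $Y$ a regular random wave, resp.\ (H3) with $F$ the isotropic random wave, Theorem~\ref{thm3}, resp.\ Theorem~\ref{thm3b}, gives $\Var(Z_\lambda^{(2)}(\phi))\to0$. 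Bessel asymptotics show that all entries of $\Omega$ decay like $\|v\|^{-(d-1)/2}$, so $\Omega\in L^q(V)$ whenever $q>2+\tfrac{2}{d-1}$; in particular $\Omega\in L^6(V)$ for every $d\ge2$, so by Theorem~\ref{thm6} (with $q=6$) the tail $\Var(Z_\lambda^{(6+)}(\phi))$ converges to a finite constant and $Z_\lambda^{(6+)}(\phi)-\E[Z_\lambda^{(6+)}(\phi)]$ is asymptotically normal. For $d\ge3$ one moreover has $\Omega\in L^4(V)$, and Theorem~\ref{thm6} with $q=4$ gives directly $\Var(Z_\lambda(\phi))\to C_d:=\lim\Var(Z_\lambda^{(4+)}(\phi))$ and the CLT; only the positivity $C_d>0$ remains. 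For $d=2$ one has $\Omega\notin L^4(V)$ and the fourth chaos must be analysed by hand.

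\emph{The fourth chaos.} By Theorem~\ref{thm1}, $\Var(Z_\lambda^{(4)}(\phi))=\tfrac1{4!}\int_{V^*}\gamma_\lambda(\xi)\,\dd\mu^{*4}(f_4)(\xi)$, where $\mu$ is the spectral matrix measure of $X=(Y,\nabla Y)$, supported on the sphere of radius $\sqrt d/2\pi$, and $f_4$ is the fourth chaotic coefficient of the Kac--Rice density. Writing $\mu^{*4}=(\mu^{*2})^{*2}$, the convolution $\mu^{*2}$ has a density $\Sigma_2$ on the ball of radius $\sqrt d/\pi$ of the form $\Sigma_2(\eta)=P(\eta)\,g_d(\|\eta\|)$, with $P$ a bounded matrix polynomial issued from the differentiation of $Y$ and $g_d(t)\propto t^{-1}(d/\pi^2-t^2)^{(d-3)/2}$ the density of the self-convolution of the uniform measure on a sphere (co-area formula / Bessel identities). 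If $d\ge3$ then $\Sigma_2\in L^2(V^*)$, so $\mu^{*4}$ has a bounded continuous density; since $\gamma_\lambda$ weakly converges to $\delta_0$ with unit mass, $\Var(Z_\lambda^{(4)}(\phi))$ converges to $\tfrac1{4!}$ times the value at the origin of the density of $\mu^{*4}(f_4)$, an absolutely convergent integral in $\Sigma_2$ and $f_4$. If $d=2$ then $\Sigma_2\notin L^2$ and the self-convolution develops a borderline logarithmic singularity at the origin, the density of $\mu^{*4}(f_4)$ behaving like $c_2\log(1/\|\xi\|)$ as $\xi\to0$ (Riesz composition of the $\|\eta\|^{-1}$ singularities; the boundary singularity of $\Sigma_2$ sits at a fixed positive radius and is washed out by $\gamma_\lambda$); by the Tauberian estimates of Lemma~\ref{lemma7} and Lemma~\ref{lemma8} one gets $\Var(Z_\lambda^{(4)}(\phi))\sim\tfrac{c_2}{4!}\log\lambda$.

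\emph{Positivity.} In both regimes the relevant constant — the limit of $\Var(Z_\lambda^{(4)}(\phi))$, resp.\ the coefficient $c_2$ — is the value at the origin of a non-negative measure, hence positive unless it vanishes; by Lemma~\ref{lemma3} vanishing is exactly the cancellation condition $\Imm(\mu^{*2})\subset C(f_4)$. To rule it out one computes $f_4$ explicitly: under (H1), $(Y(0),\nabla Y(0))$ is standard Gaussian and the Kac--Rice density factorises as $\delta_0(Y)\,\mathcal{J}(\nabla Y)$, so $f_4$ is a sum of tensor products of the Hermite coefficients of $\delta_0$ and of the normal Jacobian $\mathcal{J}$; under (H3) with $F$ isotropic, $f_4$ is built likewise from $\delta_0(\nabla F)$ and $|\det\Hess F|$. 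Now $\Imm(\mu^{*2})$ is spanned by the symmetric tensors generated by the vectors $(u,2\pi i\xi u)$, $u\in U$, $\|\xi\|=\sqrt d/2\pi$, whereas $C(f_4)$ is cut out by a non-degenerate quartic form; a direct evaluation shows $f_4$ does not vanish on all such tensors. This is the counterpart of the second chaos, where $f_2$ vanishes on the analogous rank-one vectors precisely when $Y$ is a random wave: only the second chaos cancels, while the doubled convolution $\mu^{*2}$ spreads over a full ball of radii and can no longer be contained in the quartic cone. Hence $\Imm(\mu^{*2})\not\subset C(f_4)$ and $C_d>0$.

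\emph{The central limit theorem.} For $d\ge3$ the CLT was already obtained via Theorem~\ref{thm6}. For $d=2$, one first proves a fourth-moment CLT for $Z_\lambda^{(4)}(\phi)$: writing $f_\lambda$ for the fourth Wiener kernel of $Z_\lambda^{(4)}(\phi)$ and normalising by $\sqrt{\Var(Z_\lambda^{(4)})}$, the fourth cumulant is controlled by the contractions $f_\lambda\otimes_rf_\lambda$, $r=1,2,3$, which are bounded — as in the Breuer--Major and random-wave literature, via \cite{Nua05} and the contraction principle — by iterated convolutions of $\psi$ and integrals of products of $\Omega$, and shown to be $o(\Var(Z_\lambda^{(4)}))$. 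Since $\Var(Z_\lambda^{(2)})\to0$ and $\Var(Z_\lambda^{(6+)})$ is bounded, dividing $Z_\lambda(\phi)-\E[Z_\lambda(\phi)]$ by $\sqrt{\log\lambda}$ and using $\Var(Z_\lambda^{(4)})\sim\tfrac{c_2}{4!}\log\lambda$, Slutsky's lemma gives convergence to $\mathcal{N}(0,c_2/4!)$, i.e.\ $C_2=c_2/4!$. The two substantive points are the sharp near-origin analysis of $\mu^{*4}(f_4)$ in dimension $2$ — producing the $\log\lambda$ with the correct constant while discarding the boundary singularity of $\mu^{*2}$ — and the verification that $\Imm(\mu^{*2})\not\subset C(f_4)$; unlike the classical case $d=2$, $k=1$, where this reduces to ``$r$ is not a Laplace eigenfunction'', here it requires getting one's hands on the explicit fourth Hermite coefficient of the nodal (resp.\ critical-point) Kac--Rice functional in arbitrary dimension and codimension.
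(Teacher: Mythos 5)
Your overall architecture matches the paper's: kill the second chaos via Theorems \ref{thm3}/\ref{thm3b}, control the tail with Theorem \ref{thm6} using $\Omega\in L^{6}$ (resp.\ $L^{4}$ for $d\geq 3$), analyse $\mu^{*4}=(\mu^{*2})^{*2}$ through the density of $\sigma*\sigma$ with its $\|x\|^{-1}(4-\|x\|^2)^{(d-3)/2}$ profile, and conclude the CLT by the fourth moment theorem plus Slutsky. The $d\geq 3$ versus $d=2$ dichotomy and the logarithmic rate are correctly identified.

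The gap is at the positivity step, which is the substantive content of the theorem and which you only assert. You write that ``a direct evaluation shows $f_4$ does not vanish on all such tensors,'' but no such evaluation is given, and your description of $\Imm(\mu^{*2})$ as generated by the diagonal tensors $(u,2\pi i\xi u)^{\otimes 2}$ is not correct: the image of the density of $\mu*\mu$ at a point $\eta$ is spanned by the \emph{off-diagonal} tensors $v_{\xi_1}\otimes v_{\xi_2}$ with $\xi_1+\xi_2=\eta$ and $\xi_1,\xi_2$ on the sphere, so the condition to check involves evaluating $f_4$ on pairs of distinct frequencies. The paper resolves this by identifying a subspace of $S^2(W^*)$ (namely $U\otimes(U\otimes V)$, resp.\ $V\otimes S^2(V)$) that is stable under both $f_4$ and $\Psi(0,\theta)$, computing the restriction $f_4^r$ explicitly (Lemmas \ref{lemma26} and \ref{lemma27}: essentially a multiple of $\Id\otimes\Id\otimes\Id$), and reducing positivity to the explicitly positive double integral $\int\!\!\int\Tr(\varsigma(\xi)\varsigma(\eta))^2\langle\xi,\eta\rangle^2\,\dd\sigma_\theta(\xi)\dd\sigma_\theta(\eta)$. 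Without some such computation your proof does not establish $C_d>0$.

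A second, related defect concerns $d=2$: the criterion $\Imm(\mu^{*2})\not\subset C(f_4)$ from Theorem \ref{thm4} only yields $\liminf\Var(Z_\lambda^{(4)}(\phi))>0$, which is vacuous when the variance diverges; it does not give a positive coefficient in front of $\log\lambda$. By Lemma \ref{lemma17} the logarithmic coefficient sees only the polar limits $\Psi(0,\theta)$ at the origin and the values on the boundary circle $\|\xi\|=\sqrt d/\pi$; if $\Sigma_2^{\otimes2}(f_4,f_4)$ were positive only on an intermediate annulus, the $\log$ term would vanish and the variance would in fact converge. So in dimension $2$ you must verify positivity of $\Tr(f_4\Psi(0,\theta)f_4\Psi(0,\theta))$ at those specific loci, which is exactly what the paper's explicit computation delivers and what your argument omits.
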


The heuristic that leads to the distinction between $d=2$ and $d>2$ is as follows, and can already be seen from the general discussion that follows Theorem \ref{thm4}, and corroborates the heuristics for the fluctuations of the polyspectra elaborated in \cite{Gro24}. From the discussion in the previous Section \ref{sec13}, we know that the variance of the second chaotic component cancels, despite having a covariance function that is not square-integrable. To obtain the exact asymptotics of the variance, one must examine the next non-zero chaotic component, namely the fourth one. To compute the fourth chaos and understand the difference between the cases $d=2$ and $d\geq 3$, we need to analyze, in light of Theorem \ref{thm1}, the behavior of the four-fold convolution of the uniform measure on the sphere $\sigma$ near $0$.

\noindent\underline{-- If $d=2$:}\jump

\noindent In this case, one can show that $\sigma * \sigma$ is not square-integrable, or equivalently, that $\Omega \notin L^4(V)$. This means that $\sigma^{*4}$ diverges near the origin. Its exact asymptotics are computed in Lemma \ref{lemma17}, where it is shown to behave as $|\log(x)|$ near the origin. The exact asymptotics of the variance of the fourth chaos then follow from the principles underlying Theorem \ref{thm4}, proving that the limit is non-degenerate. As for the higher chaotic components, note that $\Omega \in L^6(V)$. Their variances converge to a constant, and they do not contribute to the asymptotics of the total variance, which is of order $\log(\lambda)$. In other words, $Z_\lambda$ is asymptotically equal to its fourth chaotic component $Z_\lambda^{(4)}$.\jump

\noindent\underline{-- If $d \geq 3$:}\jump

\noindent In this case, $\sigma * \sigma$ is square-integrable, or equivalently, $\Omega \in L^4(V)$. This means that $\sigma^{*4}(0) = \|\sigma * \sigma\|_2^2$ is well-defined and positive: the variance of the fourth chaos converges to a constant, and we will prove, by the same method used in the case $d=2$, that this constant is positive. The higher chaotic components will also contribute a positive term to the total variance, and unlike the case $d=2$, there is no dominance of a single chaotic component.

\begin{corollary}
Assume that $d\geq 2$ and that the process $Y:V\rightarrow \R$ has an isotropic distribution. Then 
\[\liminf_{\lambda\rightarrow +\infty}\Var(Z_\lambda(\phi))>0.\]
\end{corollary}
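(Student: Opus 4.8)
The plan is to derive the statement from Theorems~\ref{thm3} and~\ref{thm5} by splitting according to whether or not $Y$ is a random wave, after a harmless normalization. First I would reduce to hypothesis (H1). Since $Y$ is real-valued and isotropic, its covariance $r$ is radial, hence even, so $\nabla r(0)=0$ and therefore $Y(0)\perp\nabla Y(0)$; moreover $\Var(Y(0))=r(0)>0$ and, again by isotropy, $\Var(\nabla Y(0))=c\,\Id_V$ for some $c>0$. Rescaling the inner products on $U=\R$ and on $V$ replaces $Z_\lambda(\phi)$ by a positive constant times $Z_{\lambda'}(\phi')$ where $\lambda'$ is proportional to $\lambda$ and $\phi'$ is again a test function of unit $L^2$ norm, so this changes neither the statement nor its hypotheses; hence we may and do assume $\Var(Y(0),\nabla Y(0))=\Id$, i.e. (H1) holds.

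By Theorem~\ref{thm1} one has $\Var(Z_\lambda(\phi))=\sum_{q\ge 1}\Var(Z_\lambda^{(q)}(\phi))\ge \Var(Z_\lambda^{(2)}(\phi))$, all summands being non-negative, so it suffices to bound the second chaotic component from below, except when it degenerates. Now I would distinguish two cases. Under (H1), $Y$ is a random wave exactly when its spectral measure $\psi$ is supported on the sphere of radius $\sqrt{d}/2\pi$. If this is not the case, the first part of Theorem~\ref{thm3} gives $\liminf_{\lambda\to+\infty}\Var(Z_\lambda^{(2)}(\phi))>0$, and the inequality above yields $\liminf_{\lambda\to+\infty}\Var(Z_\lambda(\phi))>0$.

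If on the other hand $\psi$ is supported on that sphere, then since $Y$ is isotropic $\psi$ is invariant under $O(d)$; being a rotation-invariant probability measure on a sphere it must be the uniform measure $\sigma$, so $Y$ is exactly the isotropic random wave. In particular $Y$ is a symmetric regular random wave (its covariance is even and its spectral density relative to $\sigma$ is constant, hence $\CC^1$), so Theorem~\ref{thm5} applies and gives $\Var(Z_\lambda(\phi))\simeq C_2\log\lambda$ if $d=2$ and $\Var(Z_\lambda(\phi))\to C_d>0$ if $d\ge 3$, with $C_d>0$ in both cases; hence again $\liminf_{\lambda\to+\infty}\Var(Z_\lambda(\phi))>0$. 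Since the two cases are exhaustive, this proves the corollary.

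The argument is essentially bookkeeping on top of Theorems~\ref{thm3} and~\ref{thm5}; the only steps that need some care are the reduction to (H1) and the identification, in the random wave case, of an isotropic field with spherical spectral support as the isotropic random wave model, which is what makes the ``symmetric regular random wave'' hypothesis of Theorem~\ref{thm5} applicable. I expect no real obstacle beyond this. The hypothesis $d\ge 2$ enters precisely in the second case: for $d=1$ the ``sphere'' is a pair of points, $Y$ is a Gaussian cosine, and the normalized number of its zeros has limiting variance zero — the exceptional case already isolated in the one-dimensional results of~\cite{Lac21}.
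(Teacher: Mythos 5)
Your proposal is correct and follows essentially the same route as the paper, which derives the corollary directly from Theorems~\ref{thm3} and~\ref{thm5} by the same dichotomy (random wave versus not, with isotropy forcing the spherically supported case to be the isotropic random wave model). The extra bookkeeping you supply — the normalization to (H1) via rescaling and the identification of a rotation-invariant spectral measure on the sphere with the uniform measure $\sigma$ — is exactly what the paper leaves implicit, and your remark on $d=1$ matches the paper's own comment on the cosine process.
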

This corollary is a straightforward consequence of Theorem \ref{thm3} and Theorem \ref{thm5}. It implies that there cannot be any variance cancellation phenomenon under the assumption of isotropy as soon as $d\geq 2$. In dimension $d=1$ the cosine process with random amplitude is the only counterexample, as asserts \cite{Lac21}.

\section{Hermite polynomials and Wiener chaos expansion}
\label{sec20}
\subsection{Hermite polynomials}
\label{sec21}
In this section, we expose some material about Wiener chaos expansion on a finite dimensional Gaussian space, see \cite{Pec11, Nou12} for general references. The following material is standard but exposed in a way that avoids the use of coordinates to obtain compact and elegant formulas for chaos projections and covariances, in term of the natural scalar product on the space of multilinear maps induced by the underlying Gaussian measure.\jump

Let $W$ be a finite dimensional real vector space of dimension $n$ and $W^*$ be its dual space. For a integer $q\geq 0$ we denote $(W^*)^{\otimes q}$ the space of $q$-linear maps on $W$. Its subspace of symmetric $q$-linear maps can naturally be identified with the space of homogeneous polynomial of degree $q$ on $W$ via the isomorphism $f\mapsto (w\mapsto f(w,\ldots,w))$. In what follows, we will simply write $f(w) = f(w,\ldots,w)$ when $f$ is a symmetric $q$-linear map.\jump

Let $\eta$ be a non degenerate Gaussian measure on $W$. Its variance defines a scalar product on $W^*$, simply denoted by $\langle\,.\,,\,.\,\rangle$ and the associated norm by $\|\,.\,\|$. This pairing between $W$ and $W^*$ allows us to transport this scalar product to $W$, and also to the space of $q$-linear map on $W$, via the formula , for $\ell_1,\ldots,\ell_q,m_1,\ldots,m_q\in W^*$
\[\langle \ell_1\otimes\ldots\otimes \ell_q, m_1\otimes\ldots\otimes  m_q\rangle = \prod_{k=1}^q \langle \ell_k, m_k\rangle.\]
It generalizes the Frobenius scalar product for matrices (i.e. the case $q=2$). We will denote $w^*$ the linear form $\langle w,\,.\,\rangle$.
For a vector $w\in W$, the Radon--Nikodym derivative of the translated measure $\eta(.\, - w)$ with respect to $\eta$ is given by
\[\frac{\dd \eta(.\,-w)}{\dd \eta} :x\longmapsto \exp\left(\langle w,x\rangle-\frac{1}{2}\|w\|^2\right).\]
It allows us to define for $x\in W$ the symmetric $q$-linear Hermite form $H_x^q$ via the Taylor expansion
\[\frac{\dd \eta(.\,-w)}{\dd \eta} : x\mapsto \sum_{q=0}^{+\infty} \frac{1}{q!}H_x^q(w),\qwith H_x^q = (-1)^q\exp\left(\frac{1}{2}\|x\|^2\right) D^q_x\left[ \exp\left(-\frac{1}{2}\|\,.\,\|^2\right)\right]\numberthis\label{eq:06}.\]
Let $U$ be another finite dimensional vector space equipped with another non-degenerate Gaussian measure $\tau$. The induced scalar product will also be denoted by $\langle\,.\,,\,.\,\rangle$. We define the set
\[E(\eta,\tau) = \enstq{\Omega\in(W^*\otimes U^*)^*}{\forall \ell\in W^*,\;\;m\in U^*,\quad \|\ell\|^2 + \|m\|^2 + 2\Omega(\ell,m)\geq 0},\]
and $E^*(\eta,\tau)$ the interior of $E(\eta,\tau)$, i.e. where the inequality in the definition is strict. When $W=U$ and $\eta=\tau$ we simply denote them $E(\eta)$ and $E^*(\eta)$. For $\Omega\in E(\eta,\tau)$, we define the Gaussian measure $\eta\otimes_{\Omega}\tau$ on $W\times U$ so that its marginals are given by $\eta$ and $\tau$ respectively, and such that $\Omega$ is the covariance bilinear map between the two marginals. 
For instance, the choice of $\Omega = 0$ leads to the product measure on $W\times U$.  One has following Gaussian integral, for $w\in W$ and $u\in U$
\[\int_{W\times U} \exp\left(\langle w,x\rangle - \frac{1}{2}\|w\|^2\right)\left( \langle u,y\rangle - \frac{1}{2}\|u\|^2\right)\dd\eta\!\otimes_{\Omega}\!\tau(x,y) = \exp\left(\Omega(w^*,u^*)\right).\numberthis\label{eq:07}\]
Developing inside the integral on the left using Equation \eqref{eq:06},  and using the Taylor expansion of the exponential of the right, one get by identifying the $q$-linear maps for different $q$, the relations for $q,q'\geq 0$
\[\int_{W\times U}H_x^q(w)H_y^{q'}(u)\dd\eta\otimes_{\Omega}\tau(x,y) = \delta_{qq'}q!\Omega(w,u)^q = \delta_{qq'}q!\Omega^{\otimes{q}}((w^*)^{\otimes q},(u^*)^{\otimes q})),\]
where $\Omega^{\otimes q}$ is seen as a bilinear map on $(W^*)^{\otimes q}\times (U^*)^{\otimes q}$. 
\subsection{Wiener chaos expansion}
\label{sec22}
From now on, we assume that $W=U$ and $\eta =\tau$, so that $\Omega$ is a bilinear map on $W^*$. The choice of $\Omega = \langle \,.\,,\,.\,\rangle$ in the previous paragraph leads to $\eta\otimes_\Omega\eta$ being the diagonal measure on $W\times W$ induced by $\eta$. In that case, Equation \eqref{eq:07} yields the classical orthogonality relations
\[\int_W H_x^q(w)H_x^{q'}(u)\dd \eta(x) = \delta_{qq'}q!\langle w,u\rangle^q.\numberthis\label{eq:08}\]
This relation allows us to detail the classical Wiener chaos decomposition for the space $L^2(\eta)$ in our framework. We define the $q$-Hermite chaos spaces of functions 
\[\HH_q = \Vect\left(\left(x\mapsto H^q_x(w)\right)_{w\in W}\right).\] The previous orthogonality relation \eqref{eq:08} implies that the vector spaces $\HH_q$ and $\HH_{q'}$ are orthogonal for different $q,q'$. Moreover, their union coincides with the set of polynomials on $W$ and is thus dense in $L^2(\eta)$. We directly deduce the orthogonal decomposition 
\[L^2(\eta) = \overline{\bigoplus_{q=0}^{\infty} \HH_q}.\]
For $f\in L^2(\eta)$, we define the symmetric $q$-linear map $f_q$ on $W$ and the $q$th Hermite projection $\pi_q(f)$ as
\[f_q = \int_{W} f(x)H_x^q\dd\eta(x)\quand \pi_q(f):x\mapsto \frac{1}{q!}\langle f_q,H^q_x\rangle.\]
Then for $q,q'\in\N$ and $w\in W$, one has
\[\pi_q(H_{.}^{q'}(w))(y) = \frac{1}{q!}\left\langle \int_W H_x^{q'}(w)H_x^q\dd\eta(x)\,,\,H_y^q\right\rangle  = \delta_{qq'}\left\langle(w^*)^{\otimes q}, H_y^q\right\rangle = \delta_{qq'}H_y^q(w),\]
where the second equality follows from Equation \eqref{eq:08}. It shows that the operator $\pi_q$ is indeed the orthogonal projection of $L^2(\eta)$ onto the $q$-chaos space $\HH_q$. If we go back to a general coupling $\Omega$ between $\eta$ and itself on $W^*\times W^*$, one obtain, for $f,g\in L^2(\eta)$, the following decomposition
\begin{align*}
\int_{W\times W} f(x)g(y)\dd\eta\otimes_{\Omega}\eta(x,y) &= \sum_{q,q'=0}^{+\infty}\int_{W\times W} \pi_qf(x)\pi_{q'} g(y)\dd\eta\otimes_{\Omega}\eta(x,y)\\
&=\sum_{qq'=0}^{+\infty}\frac{1}{q!q'!}\int_{W\times W}\langle f_q,H_x^q\rangle\langle g^{q'},H_x^{q'}\rangle \dd \eta\otimes_{\Omega}\eta(x,y) \\
&=\sum_{q=0}^{+\infty}\frac{1}{q!}\Omega^{\otimes q}(f_q,g_q).\numberthis\label{eq:13}
\end{align*}
\subsection{Hermite expansion of distributions}
\label{sec23}
Let $f$ be a tempered distribution on $W$, and $(f_q)_q$ as sequence of $q$-linear forms such that it holds in the sense of tempered distribution the equality
\[f = \sum_{q=0}^{+\infty} \langle f_q, H_x^q\rangle.\]
Note by a density argument, one necessary has
\[f_q = f(h_q),\qwith h_q:x\mapsto H_x^q \exp\left(-\frac{\|x\|^2}{2}\right).\]
For $\Omega\in  E^*(\eta)$, the density $\rho_\Omega$ of the measure $\eta\otimes_\Omega\eta$ w.r.t the Lebesgue measure on $W\times W$ is well-defined and is a function in the Schwartz class. Then the mapping $C_f : E^*(\eta)\rightarrow \R$ given by
\[C_f:\Omega\mapsto (f\otimes f)(\rho_\Omega)\]
is well defined, and as before it holds that for $\Omega\in E^*(\eta)$,
\[C_f(\Omega) = \sum_{q=0}^{+\infty}\frac{1}{q!}\Omega^{\otimes q}(f_q,f_q).\]
It shows that the function $C_f$ is analytic on $E^*(\eta)$, and the right-hand side corresponds to its series expansion. Note that if $f\in L^2(\eta)$, the function $C_f$ can be continuously extended to $E(\eta)$. Let $A$ be a compact set of $E^*(\eta)$ and $\Omega\in A$. The integral remainder in Taylor expansion implies that
\[\left|C_f(\Omega) - \sum_{k=0}^{q-1}\Omega^{\otimes k}(f_k,f_k)\right|\leq \left(\sup_{\Omega\in A} \frac{1}{q!}\|D^q C_f\|\right)\|\Omega\|^q.\numberthis\label{eq:14}\]
This expression is an alternative to Arcones inequality, see \cite{Arc94}, which has the advantage of being valid for any tempered distribution $f$. This bound is inspired from the results in \cite{Gas21b}, and holds without any square integrability assumption on $f$, the downside being that one must restrict $\Omega$ to $E^*(\eta)$, otherwise $C_f$ might be ill-defined (a typical example is the choice of function $f = \delta_0$).

\section{Positive definite functions, Hermitian form and convolution}
\label{sec30}
The Fourier transform of a covariance function of a process $Y:V\rightarrow W$ is a measure taking values in the space of Hermitian forms on $W^*$. In the following, we expose the basic theory of such measure.
\subsection{Positive Hermitian forms}
\label{sec31}
Let $W$ be a finite dimensional complex vector space. We denote by $\overline{W}$ the conjugate vector space of $W$ and $\Sesq(W) = W^*\otimes \overline{W}^*$, the space of sesquilinear forms on $W^*$, which is naturally identified with the space of linear maps between $\overline{W}$ and $W^*$. The \textit{image} of an element $\Sigma\in \Sesq(W)$, denoted by $\Imm(\Sigma)$, is the image of this mapping, as a subspace of $W^*$. If $\Sigma\in \Sesq(W)$, we define $\Sigma^c\in \Sesq(W)$, the complex conjugate of $\Sigma$, via the formula for $w_1,w_2\in W$
\[\Sigma^*(w_1,w_2) = \overline{\Sigma(w_2,w_1)}.\]
Let $\HH(W)$ be the space of Hermitian forms on $W$, that is the subspace of sesquilinear forms on $W$ invariant by complex conjugate. We simply write $\Sigma(w) = \Sigma(w,w)$ when $\Sigma$ is a Hermitian form, since it depends only on the diagonal evaluations via the polarization identity.
\[\Sigma(w_1,w_2) = \frac{1}{4}\left(\Sigma(w_1+w_2)-\Sigma(w_1-w_2)+i\Sigma(w_1+iw_2)-i\Sigma(w_1-iw_2)\right).\numberthis\label{eq:09}\]
An Hermitian form $\Sigma$ is positive semi-definite if additionally, for all $w\in W$, $\Sigma(w)\geq 0$. We will denote by $\HH^+(W)$ the set of positive semi-definite Hermitian form on $W$.\jump

We now prove a few technical lemmas concerning Hermitian forms. Let $\Sigma_1,\ldots,\Sigma_q$ Hermitian forms on complex vector spaces $W_1,\ldots,W_q$. We define the Hermitian form on $W_1\otimes \ldots\otimes W_q$
\[\bigotimes_{k=1}^q \Sigma_k : (w_1\otimes\ldots\otimes w_q)\mapsto \prod_{k=1}^q \Sigma_k(w_k).\]
In particular, if $\Sigma$ is a Hermitian form on $W$, then $\Sigma^{\otimes q}$ is a Hermitian form on $W^{\otimes q}$.
\begin{lemma}
\label{lemma1}
Let $\Sigma_1,\ldots,\Sigma_q$ positive semi-definite Hermitian forms on complex vector spaces $W_1,\ldots,W_q$. Then $\bigotimes_{k=1}^q \Sigma_k$ is a positive semi-definite Hermitian form on $W_1\otimes \ldots\otimes W_q$.
\end{lemma}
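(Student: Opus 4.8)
The plan is to reduce the claim to the rank-one case and then use the elementary fact that a product of non-negative reals is non-negative. First I would recall that a positive semi-definite Hermitian form $\Sigma_k$ on the finite-dimensional complex space $W_k$ admits a spectral decomposition: there exist linear forms $\ell_{k,1},\ldots,\ell_{k,n_k}\in W_k^*$ such that
\[
\Sigma_k = \sum_{j=1}^{n_k} \ell_{k,j}\otimes\overline{\ell_{k,j}},
\qquad\text{i.e.}\qquad
\Sigma_k(w) = \sum_{j=1}^{n_k} |\ell_{k,j}(w)|^2 .
\]
(Concretely, diagonalize $\Sigma_k$ in an orthonormal basis and let the $\ell_{k,j}$ be the coordinate forms scaled by the square roots of the eigenvalues.) Substituting this into the definition of $\bigotimes_{k=1}^q\Sigma_k$ and expanding the product, one obtains, for a decomposable tensor $w_1\otimes\cdots\otimes w_q$,
\[
\Bigl(\bigotimes_{k=1}^q\Sigma_k\Bigr)(w_1\otimes\cdots\otimes w_q)
= \prod_{k=1}^q \sum_{j=1}^{n_k} |\ell_{k,j}(w_k)|^2
= \sum_{(j_1,\ldots,j_q)} \Bigl|\,\bigl(\ell_{1,j_1}\otimes\cdots\otimes\ell_{q,j_q}\bigr)(w_1\otimes\cdots\otimes w_q)\,\Bigr|^2,
\]
so that $\bigotimes_{k=1}^q\Sigma_k = \sum_{(j_1,\ldots,j_q)} L_{j_1,\ldots,j_q}\otimes\overline{L_{j_1,\ldots,j_q}}$ with $L_{j_1,\ldots,j_q} = \ell_{1,j_1}\otimes\cdots\otimes\ell_{q,j_q}\in (W_1\otimes\cdots\otimes W_q)^*$. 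This exhibits $\bigotimes_{k=1}^q\Sigma_k$ as a sum of rank-one positive semi-definite Hermitian forms, hence it is itself Hermitian and positive semi-definite.

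The one point that needs a word of care is that the identity above is stated on decomposable tensors, whereas positivity must hold for \emph{all} elements of $W_1\otimes\cdots\otimes W_q$. This is handled by the final displayed formula: the expression $\sum_{(j_1,\ldots,j_q)} |L_{j_1,\ldots,j_q}(\cdot)|^2$ is, term by term, the squared modulus of a \emph{linear} functional on the whole tensor product, hence is well-defined and non-negative on every vector, and it agrees with $\bigotimes_{k=1}^q\Sigma_k$ on the spanning set of decomposable tensors; since both sides are sesquilinear forms that coincide on a spanning family of the diagonal, they coincide everywhere by the polarization identity \eqref{eq:09}. Thus $\bigotimes_{k=1}^q\Sigma_k \in \HH^+(W_1\otimes\cdots\otimes W_q)$.

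I do not expect a serious obstacle here; the only mild subtlety — and the step I would be most careful to phrase correctly — is the passage from decomposable tensors to arbitrary ones, i.e. making sure the "diagonal" rank-one decomposition genuinely defines a Hermitian form on the full tensor product rather than only a function on simple tensors. Once the decomposition $\Sigma_k = \sum_j \ell_{k,j}\otimes\overline{\ell_{k,j}}$ is in hand and one writes the tensored form as a sum of $L\otimes\overline{L}$'s with $L$ linear on the tensor product, everything else is formal. By induction on $q$ one could alternatively reduce to the case $q=2$ and iterate, but the direct expansion above is cleaner and handles all $q$ at once.
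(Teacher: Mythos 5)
Your proof is correct and follows essentially the same route as the paper's: both rest on diagonalizing each $\Sigma_k$ --- the paper expands an arbitrary tensor in a $\Sigma_k$-orthogonal basis of each factor, while you equivalently expand each $\Sigma_k$ into rank-one forms $\ell\otimes\overline{\ell}$ and tensor those, so that in either formulation the form becomes a sum of manifestly non-negative terms. One small caution: your closing justification, that two sesquilinear forms agreeing on the diagonal over a \emph{spanning family} must coincide by polarization, is false as stated (on $\C^2$ the Hermitian form $(w,v)\mapsto w_1\overline{v_2}+w_2\overline{v_1}$ vanishes on the diagonal at $e_1$ and $e_2$ but is not zero); the correct and equally immediate argument is that your expansion shows the two forms agree on all \emph{pairs} of decomposable tensors, after which sesquilinearity alone forces them to agree everywhere.
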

\begin{proof}
By the diagonalization theorem for Hermitian forms, for $1\leq k\leq q$ there is a basis of $W_k$, denoted $w^1_k,\ldots,w^{d_k}_k$ such that $\Sigma_k(w^i_k,w^j_k) = 0$ if $i\neq j$. Let $w\in\bigotimes_{k=1}^qW_k$. Decomposing $w$ on the induced basis on $\bigotimes_{k=1}^qW_k$, so that
\[w = \sum_{i_1,\ldots,i_q=1}^{d_{i_1},\ldots,d_{i_q}} \alpha_k^{i_k}\bigotimes_{k=1}^qw_k^{i_k}\]
 one gets
\[\bigotimes_{k=1}^q \Sigma_k(w) = \sum_{i_1,\ldots,i_q=1}^{d_{i_1},\ldots,d_{i_q}}\prod_{k=1}^q|\alpha_k^{i_k}|^2\Sigma_k(w_k^{i_k})\geq 0,\]
which proves that the measure $\mu_1\otimes\ldots\otimes\mu_q$ is positive.
\end{proof}
\begin{lemma}
\label{lemma2}
Let $\Sigma\in \HH^+(W^*)$, $f\in W^*\otimes W^*$ and $\langle\,\,.\,,\,.\,\rangle$ a Hermitian product on $W$. Assume the existence of a subspace $V$ of $W$, and application $P : W^*\rightarrow V$ so that $\Sigma(\cdot,\cdot) = \langle P(\cdot), P(\cdot)\rangle$. Then
\[\Sigma^{\otimes 2}(f,f) = \|f(P(\cdot),P(\cdot))\|^2.\]
\end{lemma}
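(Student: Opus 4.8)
The plan is to unwind the definitions of all the objects involved and verify the claimed identity by a direct computation on decomposable tensors, extended by bilinearity. First I would fix notation: write $\Sigma(\cdot,\cdot) = \langle P(\cdot), P(\cdot)\rangle$ as hypothesized, so that $P : W^* \to V$ is a linear map into the Hermitian space $(V,\langle\,\cdot\,,\,\cdot\,\rangle)$. The key observation is that $\Sigma^{\otimes 2}$ is, by definition, the Hermitian form on $(W^*)^{\otimes 2}$ determined on decomposables by $\Sigma^{\otimes 2}(\ell_1 \otimes \ell_2) = \Sigma(\ell_1)\Sigma(\ell_2)$, and that for decomposables this factors through $P$: one has $\Sigma^{\otimes 2}(\ell_1 \otimes \ell_2) = \langle P(\ell_1), P(\ell_1)\rangle \langle P(\ell_2), P(\ell_2)\rangle = \|P(\ell_1) \otimes P(\ell_2)\|^2$, where the last norm is the one induced on $V \otimes V$. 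In other words, $\Sigma^{\otimes 2} = (P \otimes P)^* \langle\,\cdot\,,\,\cdot\,\rangle_{V\otimes V}$, i.e. $\Sigma^{\otimes 2}$ is the pullback along $P \otimes P$ of the natural Hermitian product on $V^{\otimes 2}$.

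Next I would apply this to the element $f \in W^* \otimes W^*$. Writing $f = \sum_i \ell_i \otimes m_i$ for some $\ell_i, m_i \in W^*$, bilinearity of the pullback form gives
\[
\Sigma^{\otimes 2}(f,f) = \sum_{i,j} \langle P(\ell_i), P(\ell_j)\rangle\langle P(m_i), P(m_j)\rangle = \Big\| \sum_i P(\ell_i)\otimes P(m_i)\Big\|^2_{V\otimes V}.
\]
Now $\sum_i P(\ell_i)\otimes P(m_i)$ is precisely the image of $f$ under $P \otimes P : W^*\otimes W^* \to V \otimes V$, which, when $f$ is regarded as a bilinear form on $W\times W$ via the identification $(W^*)^{\otimes 2}\simeq (W^{\otimes 2})^*$, is exactly the bilinear form $(w_1,w_2)\mapsto f(P^\top w_1, P^\top w_2)$ — the object denoted $f(P(\cdot),P(\cdot))$ in the statement (here one should be slightly careful about whether $P$ acts on $W^*$ or its adjoint acts on $W$, but under the Hermitian identifications these are the standard transposes and the notation $f(P(\cdot),P(\cdot))$ is understood in the sense of the contraction $(P\otimes P)(f)$). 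Hence $\Sigma^{\otimes 2}(f,f) = \|f(P(\cdot),P(\cdot))\|^2$, which is the claim.

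The main thing to be careful about — and the only real obstacle — is bookkeeping of the dualities: $\Sigma$ is a Hermitian form on $W^*$, so $P$ maps $W^*$ into $V$, and the notation $f(P(\cdot),P(\cdot))$ on the right-hand side must be interpreted through the canonical identification of $f \in W^*\otimes W^*$ with a sesquilinear/bilinear gadget on which $P$ (or rather its adjoint) acts; once this identification is pinned down, every step is a one-line consequence of the definition of the tensor-product Hermitian form and its behaviour under pullback by a linear map. I would also remark that well-definedness (independence of the chosen decomposition $f = \sum_i \ell_i\otimes m_i$) is automatic since the intermediate quantity $(P\otimes P)(f) \in V\otimes V$ is intrinsic. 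No positivity of $\Sigma$ is actually needed for the identity itself — the hypothesis $\Sigma \in \HH^+(W^*)$ only guarantees that such a factorization $\Sigma = \langle P(\cdot),P(\cdot)\rangle$ exists (e.g. via a square root), and that is presumably why it is stated.
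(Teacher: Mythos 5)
Your proof is correct and is essentially the same direct verification as the paper's: the paper fixes a basis and reduces the claim to the matrix identity $\Tr(\bar{P}P^TfP\bar{P}^T\bar{f}) = \|P^TfP\|^2$, which is exactly your computation $\Sigma^{\otimes 2}(f,f) = \|(P\otimes P)(f)\|^2$ written in coordinates. Your coordinate-free presentation via decomposable tensors, and your closing remark that positivity of $\Sigma$ is only needed to guarantee the factorization exists, are both accurate and, if anything, more explicit than the one-line proof in the paper.
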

\begin{proof}
Fixing a basis on $W$ this identity simply reads
\[\Tr(\bar{P}P^TfP\bar{P}^T\bar{f}) = \|P^TfP\|^2.\]
\end{proof}
Given a symmetric element $f\in W^*\otimes W^*$, we define its \textit{isotropic cone} $C(f)$ as the set
\[C(f) = \enstq{w\in W}{f(w)=0}.\]
\vspace{-15pt}
\begin{lemma}
\label{lemma3}
Let $\Sigma\in \HH^+(W^*)$ and $f\in W\otimes W$. Then 
\[\Sigma^{\otimes 2}(f,f)=0\Longleftrightarrow \Imm(\Sigma)\subset C(f).\]
\end{lemma}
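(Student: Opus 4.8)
The plan is to prove the equivalence $\Sigma^{\otimes 2}(f,f)=0 \Longleftrightarrow \Imm(\Sigma)\subset C(f)$ by exploiting the spectral decomposition of the positive semi-definite Hermitian form $\Sigma$. First I would diagonalize: by the diagonalization theorem for Hermitian forms there is a basis $(e_1,\ldots,e_r,e_{r+1},\ldots,e_n)$ of $W^*$ and positive reals $\lambda_1,\ldots,\lambda_r>0$ such that $\Sigma(\cdot,\cdot) = \sum_{k=1}^r \lambda_k \langle \cdot, e_k\rangle\overline{\langle \cdot, e_k\rangle}$ (here $\langle\cdot,e_k\rangle$ denotes the dual-pairing coordinate), and $\Imm(\Sigma) = \Span(e_1,\ldots,e_r)$. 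Equivalently, writing $P : W^* \to \C^r$ for the map with coordinates $\sqrt{\lambda_k}\langle\cdot,e_k\rangle$, we are exactly in the situation of Lemma \ref{lemma2}: $\Sigma(\cdot,\cdot) = \langle P(\cdot),P(\cdot)\rangle_{\C^r}$. By that lemma,
\[
\Sigma^{\otimes 2}(f,f) = \|f(P(\cdot),P(\cdot))\|^2,
\]
where $f(P(\cdot),P(\cdot))$ is the $r\times r$ matrix with entries $\sqrt{\lambda_i\lambda_j}\,f(e_i,e_j)$ (using that $f$ is symmetric in $W\otimes W$, paired against $W^*$). Since all $\lambda_k>0$, this matrix vanishes if and only if $f(e_i,e_j)=0$ for all $1\le i,j\le r$.

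Next I would translate the condition $f(e_i,e_j)=0$ for all $i,j\le r$ into the geometric statement $\Imm(\Sigma)\subset C(f)$. One direction is immediate: if $\Imm(\Sigma)=\Span(e_1,\ldots,e_r)\subset C(f)$, then $f(e_k)=f(e_k,e_k)=0$ for each $k\le r$, and by the polarization identity \eqref{eq:09} (applied to the symmetric bilinear form $f$, or its Hermitian version) all mixed terms $f(e_i,e_j)$ vanish too, hence $f(P(\cdot),P(\cdot))=0$ and $\Sigma^{\otimes 2}(f,f)=0$. Conversely, if $\Sigma^{\otimes 2}(f,f)=0$ then $f(e_i,e_j)=0$ for all $i,j\le r$; any $w\in\Imm(\Sigma)$ is a linear combination $w=\sum_{k\le r}c_k e_k$, and bilinearity of $f$ gives $f(w)=\sum_{i,j\le r}c_ic_j f(e_i,e_j)=0$, so $w\in C(f)$. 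This shows $\Imm(\Sigma)\subset C(f)$.

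The only genuinely delicate point is a bookkeeping one: the isotropic cone $C(f)$ is defined via the diagonal values $f(w)=f(w,w)$, whereas the non-negativity argument naturally produces the off-diagonal entries $f(e_i,e_j)$. The resolution is polarization — for a symmetric bilinear form, diagonal vanishing on a spanning set is equivalent to full vanishing on that subspace — but one must be slightly careful about the real-versus-complex version of the identity, since $C(f)$ is a subset of the complex space $W$ and $f$ extends $\C$-bilinearly (not sesquilinearly). I would state this as a one-line observation: a $\C$-bilinear symmetric form vanishes on a subspace $E$ iff it vanishes on the diagonal of $E$, which is exactly \eqref{eq:09} read for $\C$-bilinear $f$ (the $i\Sigma(w_1+iw_2)$ terms still make sense). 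With that in hand the proof is essentially the two-line computation above plus Lemma \ref{lemma2}; no estimate or limiting argument is needed.
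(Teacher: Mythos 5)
Your proof is correct and follows essentially the same route as the paper's: both factor $\Sigma$ through a square-root-type map (the paper uses the abstract Hermitian square root $\sqrt{\Sigma}$, you use an explicit spectral decomposition to build $P$), apply Lemma~\ref{lemma2}, and identify the image of that map with $\Imm(\Sigma)$. The only substantive difference is that you spell out the polarization step showing that vanishing of $f$ on the diagonal of $\Imm(\Sigma)$ is equivalent to vanishing of all mixed entries $f(e_i,e_j)$ --- a point the paper leaves implicit in the phrase ``the right hand side cancels if and only if $\Imm(\Sigma)\subset C(f)$'' --- and your caveat about using the $\C$-bilinear rather than sesquilinear polarization identity is well taken.
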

\begin{proof}
Let $\langle\,\,.\,,\,.\,\rangle$ be an Hermitian product on $W$. Since  $\Sigma$ is a non-negative Hermitian form, it admits a unique Hermitian square root $\sqrt{\Sigma}:W^*\rightarrow W$, so that
\[\Sigma(\cdot,\cdot) = \langle \sqrt{\Sigma}(\cdot),\sqrt{\Sigma}(\cdot)\rangle,\]
From the previous lemma,
\[\Sigma^{\otimes 2}(f,f) = \left\|f\left(\sqrt{\Sigma}(\,.\,),\sqrt{\Sigma}(\,.\,)\right)\right\|^2.\]
Since the maps $\Sigma$ and $\sqrt{\Sigma}$ have the same image, the right hand side cancels if and only if $\Imm(\Sigma)\subset C(f)$.
\end{proof}

\subsection{Positive Hermitian measure and Bochner Theorem}
\label{sec32}
Let $V$ be a finite dimensional real vector space and $\BB(V)$ be its Borel $\sigma$-field. Let $W$ be a finite dimensional complex vector space. A map $\mu : \mathcal{B}(V)\rightarrow  \Sesq(W)$ is a \textit{Hermitian measure} over $W$ if for all $w\in W$, the mapping $\mu_w:E\mapsto \mu(E)(w)$ is a bounded signed measure on $(V,\mathcal{B}(V))$, and is \textit{positive} if it is a non-negative measure. If one fix a basis of $W$, then on can think of $\mu$ as a matrix of complex-valued measures such that for all measurable set $E$, such that the quantity $\mu(E)$ is a Hermitian matrix, positive semi-definite if $\mu$ is positive. \jump

A function $\Omega:V\rightarrow \Sesq(W)$ is positive semi-definite if for all choice of vectors $v_1,\ldots,v_m\in V$ and $w_1,\ldots,w_m\in W$, one has
\[\sum_{i,j=1}^m \Omega(v_i-v_j)(w_i,w_j)\in\R_+.\]
When $W=\C$, this is the usual definition of positive semi-definite functions. Bochner theorem then states that there is an equivalence between being a continuous positive semi-definite complex valued function, and being the Fourier transform of a bounded measure on the real line. We show that this theorem has a straightforward extension to functions taking values in $\Ss q(W)$.
\begin{lemma}
\label{lemma4}
Let $\Omega:V\rightarrow \Sesq(W)$. The two propositions are equivalents.
\begin{itemize}
\item[(1)] $\Omega$ is a continuous positive semi-definite function.
\item[(2)] $\Omega$ is the Fourier transform of a positive Hermitian measure $\mu$ on $V^*$ over $W$.
\end{itemize}
\end{lemma}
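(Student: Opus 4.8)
The plan is to reduce the vector-valued statement to the classical scalar Bochner theorem by polarization, exactly in the spirit of the coordinate-free philosophy adopted throughout Section \ref{sec30}. The implication $(2)\Rightarrow(1)$ is the easy direction: if $\Omega = \widehat{\mu}$ for a positive Hermitian measure $\mu$, then for any finite families $v_1,\dots,v_m\in V$ and $w_1,\dots,w_m\in W$ one writes
\[
\sum_{i,j=1}^m \Omega(v_i-v_j)(w_i,w_j) = \int_{V^*} \sum_{i,j=1}^m e^{i\langle \xi,\,v_i-v_j\rangle}\,\dd\mu(\xi)(w_i,w_j) = \int_{V^*} \dd\mu(\xi)\!\left(\sum_i e^{i\langle\xi,v_i\rangle}w_i\right)\geq 0,
\]
using sesquilinearity of $\mu(\xi)$ and positivity of the (scalar) measure $\mu_{(\,\cdot\,)}$; continuity of $\Omega$ follows from dominated convergence since $\mu$ is a bounded (Hermitian) measure.

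For the substantive direction $(1)\Rightarrow(2)$, fix a vector $w\in W$ and consider the scalar function $\Omega_w : V\to\C$, $v\mapsto \Omega(v)(w,w)$. Taking $m$ points with the same weight vector $w$ shows $\Omega_w$ is positive semi-definite in the classical sense, hence by the scalar Bochner theorem there is a bounded non-negative measure $\mu_w$ on $V^*$ with $\Omega_w = \widehat{\mu_w}$. The key step is then to reconstruct a full $\Sesq(W)$-valued measure from the diagonal family $(\mu_w)_{w\in W}$ via the polarization identity \eqref{eq:09}: for $w_1,w_2\in W$ define
\[
\mu(w_1,w_2) := \tfrac14\bigl(\mu_{w_1+w_2} - \mu_{w_1-w_2} + i\,\mu_{w_1+iw_2} - i\,\mu_{w_1-iw_2}\bigr),
\]
a bounded signed (complex) measure on $V^*$. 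By construction its Fourier transform is $\tfrac14(\Omega_{w_1+w_2}-\Omega_{w_1-w_2}+i\Omega_{w_1+iw_2}-i\Omega_{w_1-iw_2})$, which equals $\Omega(v)(w_1,w_2)$ since $\Omega(v)$ is sesquilinear and the polarization identity recovers a sesquilinear form from its diagonal; so $\widehat\mu = \Omega$. It remains to check that $E\mapsto \mu(E)$ takes values in $\Sesq(W)$ and is positive. Sesquilinearity (and Hermitian symmetry) in $(w_1,w_2)$ for each fixed $E$ follows because, for each $E$, the assignment $w\mapsto \mu_w(E)$ is a non-negative "quadratic" functional whose polarization is automatically sesquilinear — this is a finite-dimensional linear-algebra fact, but one should verify that the measure-theoretic dependence on $w$ is genuinely quadratic, which it is since it is the inverse Fourier transform of $v\mapsto\Omega(v)(w,w)$ and $w\mapsto\Omega(v)(w,w)$ is quadratic. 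Positivity of $\mu(E)$ as a Hermitian form is obtained by approximating the indicator $\mathbf 1_E$ in a suitable sense (or by applying the positive-definiteness hypothesis of $\Omega$ to Riemann-type sums, passing through the inversion formula) so that $\mu(E)(w) = \lim \sum_{i,j} c_i\bar c_j \Omega(v_i-v_j)(w,w)\geq 0$.

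The main obstacle is this last point: establishing that $\mu(E)$ is a \emph{positive} Hermitian form for every Borel $E$, not merely that $\mu$ has the right Fourier transform. One clean way around a direct approximation argument is to first prove positivity at the level of densities against nice test functions — i.e. show $\int \varphi\,\dd\mu_w \geq 0$ for all $\varphi\geq 0$ in the Schwartz class by writing $\varphi = |\psi|^2$ and expanding $\int\int \psi(v)\overline{\psi(v')}\Omega(v-v')(w,w)\,\dd v\,\dd v'\geq 0$ via the positive-definiteness hypothesis (a continuous analogue of the finite sum, justified by density/continuity) — and then conclude positivity of $\mu_w$ as a measure, hence of $\mu(E)(w)\geq 0$ for all $E$ and all $w$, i.e. $\mu(E)\in\HH^+(W)$. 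With positivity of each $\mu_w$ in hand, boundedness of $\mu$ (hence that each $\mu_w$ is a genuine finite measure) follows from evaluating $\Omega$ at $v=0$: $\mu_w(V^*) = \Omega(0)(w,w) < \infty$. This yields conclusion $(2)$, and the lemma is proved; the argument is entirely parallel to the scalar case, with polarization \eqref{eq:09} doing the bookkeeping that coordinates would otherwise handle.
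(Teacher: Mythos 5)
Your proof is correct and follows essentially the same route as the paper: apply the scalar Bochner theorem to the diagonal functions $\Omega_w$ and recover the Hermitian measure via the polarization identity \eqref{eq:09}, with the converse direction being the same direct computation against exponentials. The only (minor) difference is that you treat the positivity of $\mu(E)$ as a remaining obstacle requiring an approximation argument, whereas it is immediate: the scalar Bochner theorem already hands you each $\mu_w$ as a \emph{non-negative} measure, so $\mu(E)(w)=\mu_w(E)\geq 0$ for every Borel set $E$ as soon as the sesquilinearity of the polarized measure is established (by uniqueness of Fourier transforms of bounded measures), which is exactly how the paper concludes.
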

\begin{proof}
Let us prove first the direct implication. For $w\in W$, the function $\Omega_w:v\mapsto \Omega(v)(w,w)$ is a continuous complex valued positive semi-definite function. By the usual Bochner Theorem, $\Omega_w$ is the Fourier transform of a non-negative bounded measure on $V^*$, denoted $\mu_w$. Let $E$ be a measurable set in $V^*$. The linearity of the Fourier transform and the polarization identity \eqref{eq:09} imply that the mapping 
\[\mu(E):w\mapsto \mu_w(E)\]
defines a positive semi-definite Hermitian form on $W$ and that $\hat{\mu} = \Omega$. Conversely, Assume that $\Omega=\hat{\mu}$ is a positive Hermitian measure on $V^*$ over $W$. Since the measure $\mu$ is bounded, the function $\Omega$ is continuous. Let $v_1,\ldots,v_m\in V$ and $w_1,\ldots,w_m\in W$. Then
\[0\leq \int_{V^*}\dd\mu(\xi)\left(\sum_{i=1}^m w_ie^{2i\pi\xi(v_i)}\right) = \sum_{i,j=1}^m\int_{V^*}e^{2i\pi\xi(x_i-x_j)}\dd\mu(w_i,w_j) = \sum_{i,j=1}^m \Omega(x_i-x_j)(w_i,w_j).\]
This shows that $\Omega$ is a positive semi-definite function.
%
\end{proof}
\begin{remark}
If $W$ is a real vector space instead of a complex vector space, we can embed it in its complexification $W_\C = W\otimes\C$ via the mapping $w\mapsto w\otimes 1$. A map $\Omega:V\rightarrow (W^*)^{\otimes 2}$ can be seen as a map taking values in $\Sesq(W_\C)$ via this embedding, and one can extend the notion of positive definiteness for such bilinear maps $\Omega$. The conclusion of Lemma \ref{lemma4} then holds if one replace $(2)$ by saying that $\Omega$ is the Fourier transform of a positive Hermitian measure over $W_\C$. In particular, if $X:V\rightarrow W$ is a stationary Gaussian process with continuous covariance function $\Omega$, then its spectral measure $\mu = \widehat{\Omega}$ is a symmetric positive Hermitian measure over $W_\C$.
\end{remark}
\subsection{Convolution of Hermitian measures}
\label{sec33}
Let $q\geq 1$, $W_1,\ldots,W_q$ be complex vector spaces and $\mu_1,\ldots,\mu_q$ be Hermitian measures over $W_1,\ldots,W_q$ respectively. We define the Hermitian measure $\mu_1*\ldots*\mu_q$ on $V$ over $\bigotimes_{k=1}^qW_k$ by the formula, for all vectors $w_1,\ldots, w_q$ in $W_1,\ldots,W_q$ respectively, by the formula
\[\mu_1*\ldots*\mu_q(w_1\otimes \ldots\otimes w_q) = \mu_1(w_1)*\ldots*\mu_q(w_q),\]
where on the right, the symbol $*$ denotes the usual convolution of measures. Alternatively, one can construct the measure $\mu_1*\ldots*\mu_q$ as the push-forward of the Hermitian measure $\mu_1\otimes\ldots\otimes \mu_q$ on $V^q$ by the addition map from $V^q$ to $V$, meaning that for a measurable subset $E$ of $V$ one has
\[\mu_1*\ldots*\mu_q(E) = \int_{V^q} \one_E(x_1+\ldots+x_q)\dd (\mu_1\otimes\ldots\otimes \mu_q)(x_1,\ldots,x_q).\]
 If $\mu$ is a measure on $V$ taking value in the space of Hermitian forms on a complex space $W$, we define its $q$-fold convolution $\mu^{*q}$ as the convolution with itself $q$ times.
\begin{lemma}
\label{lemma5}
Assume that for $1\leq k\leq q$, the measures $\mu_k$ are positives Hermitian measures over $W_1,\ldots,W_q$ respectively. Then the measures $\mu_1\otimes\ldots\otimes \mu_q$ and $\mu_1*\ldots*\mu_q$ are positive Hermitian measures.
\end{lemma}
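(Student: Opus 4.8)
The plan is to reduce the convolution statement to the tensor-product statement, and then to settle the tensor case by introducing a common non-negative scalar base measure and invoking Lemma~\ref{lemma1} pointwise. For the reduction, I would start from the push-forward description recalled just above the lemma: $\mu_1*\cdots*\mu_q$ is the image of $\mu_1\otimes\cdots\otimes\mu_q$ under the addition map $s\colon V^q\to V$, so that for every Borel $E\subset V$ and every $w$,
\[(\mu_1*\cdots*\mu_q)(E)(w)=(\mu_1\otimes\cdots\otimes\mu_q)\big(s^{-1}(E)\big)(w).\]
Countable additivity in $E$ and Hermitianity in $w$ pass through this identity verbatim, and non-negativity passes through as soon as the right-hand side is non-negative; hence it suffices to prove that $\mu_1\otimes\cdots\otimes\mu_q$ is a positive Hermitian measure.

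For the tensor case, recall that each $\mu_k$ is by hypothesis a bounded positive Hermitian measure over $W_k$. I would fix a Hermitian inner product on each $W_k$, set $\nu_k=\Tr\mu_k$ (a finite non-negative measure on $V$), and use the representation discussed before Theorem~\ref{thm4} to write $\dd\mu_k=\Sigma_k\,\dd\nu_k$ for a $\nu_k$-measurable density $\Sigma_k\colon V\to\HH^+(W_k)$. This is legitimate because $\mu_k(E)\in\HH^+(W_k)$ is dominated by $\Tr(\mu_k(E))\,\Id$, so $\mu_{k,w}\ll\nu_k$ for every $w$, and the Radon--Nikodym densities assemble into a positive-semidefinite-valued map $\Sigma_k$. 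The next step is to identify
\[\mu_1\otimes\cdots\otimes\mu_q=\big(\Sigma_1\otimes\cdots\otimes\Sigma_q\big)\,\dd(\nu_1\otimes\cdots\otimes\nu_q),\]
with $(\Sigma_1\otimes\cdots\otimes\Sigma_q)(x_1,\dots,x_q)=\Sigma_1(x_1)\otimes\cdots\otimes\Sigma_q(x_q)$: by Fubini the two $\Sesq(\bigotimes_k W_k)$-valued set functions agree on product sets evaluated at simple tensors, and a $\pi$-system/uniqueness argument combined with sesquilinearity and the polarization identity \eqref{eq:09} upgrades this agreement to all Borel subsets of $V^q$ and all $w\in\bigotimes_k W_k$.

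Once this identification is in place, Lemma~\ref{lemma1} shows that for $(\nu_1\otimes\cdots\otimes\nu_q)$-almost every $(x_1,\dots,x_q)$ the form $\Sigma_1(x_1)\otimes\cdots\otimes\Sigma_q(x_q)$ lies in $\HH^+(\bigotimes_k W_k)$; integrating this non-negative, Hermitian-form-valued function over an arbitrary Borel $E\subset V^q$ yields that $(\mu_1\otimes\cdots\otimes\mu_q)(E)$ is a positive semi-definite Hermitian form, and by the reduction step so is $(\mu_1*\cdots*\mu_q)(E)$. I expect the only genuinely delicate point to be the bookkeeping in the identification step — checking $(\nu_1\otimes\cdots\otimes\nu_q)$-measurability of the tensor density and verifying that agreement on the $\pi$-system of product sets (together with sesquilinearity in $w$) really pins down the tensor Hermitian measure — and making sure the Radon--Nikodym densities, which are defined only $\nu_k$-almost everywhere, are combined consistently; everything else is routine.
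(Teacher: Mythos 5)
Your proposal is correct and follows essentially the same route as the paper's (very terse) proof: reduce the convolution to the tensor product via the push-forward under the addition map, and obtain positivity of the tensor product from Lemma~\ref{lemma1}. The only difference is that you make explicit the measure-theoretic step the paper leaves implicit --- passing through the representations $\dd\mu_k=\Sigma_k\,\dd\nu_k$ so that Lemma~\ref{lemma1} can be applied pointwise to the densities and then integrated over arbitrary (non-product) Borel sets --- which is a legitimate and indeed necessary filling-in of detail rather than a different argument.
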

\begin{proof}
The positivity of $\mu_1\otimes\ldots\otimes\mu_q$ follows directly from Lemma \ref{lemma1}. The positivity of $\mu_1*\ldots*\mu_q$ then follows from the definition of convolution.
\end{proof}

Let $\nu$ be a bounded measure on $V$. We say that a positive Hermitian measure $\mu$ is \textit{absolutely continuous} w.r.t $\nu$ if for all $w\in W$, the measure $\mu_w$ is absolutely continuous w.r.t to $\nu$, and we write it $\mu \ll \nu$. For instance, if $W$ comes equipped with an Hermitian scalar product (so one can take the trace of a sesquilinear form on $W$), then one can define the trace measure $\mu^{\Tr} : E\mapsto \Tr(\mu(E))$, and it is straightforward to see that $\mu\ll\mu^{\Tr}$. By the Radon-Nikodym theorem, there exists a non-negative function $\Sigma:V\rightarrow \Ss q(W)$, uniquely defined up to a $\nu$-null set, such that $\dd \mu = \Sigma\,\dd \nu$. Such a decomposition of $\mu$ will be called a \textit{representation of $\mu$}. Note that for such a representation, one always has $\mu^{\Tr}\ll\nu$.\jump

Given a representation of $\mu$, we define the \textit{image} of $\mu$, denoted by $\Imm(\mu)$, as the smallest closed subset $A\subset W^*$ such that
\[\nu\left(\enstq{x\in V}{\Imm(\Sigma(x))\not\subset A}\right)=0.\]
This notion coincides with the essential range of the measurable function $(x,w)\mapsto \Sigma(x)(w,.)$, and is independent of the representation of $\mu$. 
\begin{lemma}
\label{lemma6}
The image of a positive Hermitian measure $\mu$ does not depend on a choice of representation.
\end{lemma}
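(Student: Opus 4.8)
The plan is to show that the set $A_{\mu} := \Imm(\mu)$ defined via a representation $\dd\mu = \Sigma\,\dd\nu$ coincides with an object that manifestly does not depend on the representation, namely the \emph{essential range} of the vector-valued measurable map $x \mapsto \Sigma(x)$ with respect to $\nu$, transported suitably to $W^*$. Concretely, I would first reformulate the defining condition: the smallest closed $A$ such that $\nu(\{x : \Imm(\Sigma(x)) \not\subset A\}) = 0$ is exactly the closure of the union of the essential images of the maps $x \mapsto \Sigma(x)(w,\cdot) \in W^*$ as $w$ ranges over $W$ (equivalently over a countable dense subset of $W$, which is where separability of $W^*$ enters to make ``essential'' behave well). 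So the first step is the lemma-internal claim $\Imm(\mu) = \overline{\bigcup_{w \in D} \operatorname{essran}_{\nu}(x \mapsto \Sigma(x)(w,\cdot))}$ for $D$ countable dense in $W$; this is a routine point-set argument using that a countable union of $\nu$-null sets is $\nu$-null and that $\Imm(\Sigma(x)) = \overline{\Span}\{\Sigma(x)(w,\cdot) : w \in D\}$ by continuity of $\Sigma(x)$.

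Next I would handle the dependence on the representation. Suppose $\dd\mu = \Sigma\,\dd\nu = \Sigma'\,\dd\nu'$ are two representations. The key observation is that both $\nu$ and $\nu'$ are mutually absolutely continuous with the trace measure $\mu^{\Tr}$ on the set where the fibers are nonzero — more precisely, $\mu^{\Tr} \ll \nu$ and $\mu^{\Tr} \ll \nu'$ always hold (as noted in the text just before the lemma), and on $\{x : \Sigma(x) \neq 0\}$ one also has $\nu \ll \mu^{\Tr}$, since $\nu(E) = 0$ forces $\mu_w(E) = 0$ for all $w$, hence $\mu^{\Tr}(E) = 0$ only after discarding the $\nu$-null locus where $\Sigma$ vanishes. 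The cleanest route is therefore to compare each representation to the canonical one $\dd\mu = \Sigma_0\,\dd\mu^{\Tr}$: it suffices to prove $\Imm_{\Sigma}(\mu) = \Imm_{\Sigma_0}(\mu)$ whenever $\dd\mu = \Sigma\,\dd\nu = \Sigma_0\,\dd\mu^{\Tr}$. Writing $\nu = g\,\dd\mu^{\Tr} + \nu_s$ with $\nu_s \perp \mu^{\Tr}$, one gets $\Sigma(x) g(x) = \Sigma_0(x)$ for $\mu^{\Tr}$-a.e.\ $x$, and on the $\nu_s$-part $\Sigma$ must vanish $\nu_s$-a.e.\ (otherwise $\mu$ would charge a $\mu^{\Tr}$-null set). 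Since $g > 0$ $\mu^{\Tr}$-a.e.\ on the support-relevant locus, $\Sigma(x)$ and $\Sigma_0(x)$ are positive scalar multiples of each other for $\mu^{\Tr}$-a.e.\ $x$, hence have the same image $\Imm(\Sigma(x)) = \Imm(\Sigma_0(x))$, and the portion of $\nu$ singular to $\mu^{\Tr}$ contributes only the trivial image $\{0\} \subset A$, which does not affect the closed set. Therefore the essential range computed from either representation is the same closed subset of $W^*$.

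The main obstacle I anticipate is purely measure-theoretic bookkeeping rather than anything deep: carefully tracking the locus where $\Sigma$ vanishes (which is $\nu$-relevant but $\mu^{\Tr}$-invisible) and making sure that discarding it does not shrink the claimed image, together with justifying that ``$\Imm(\Sigma(x)) \subset A$ for $\nu$-a.e.\ $x$'' can be tested on a countable dense set of $w$'s so that the exceptional null set is a genuine (countable) union. I would also need the elementary fact that for positive semidefinite Hermitian forms, $\Imm(\Sigma) = \Imm(\sqrt{\Sigma})$ and that $\Sigma \mapsto \Imm(\Sigma)$ is unchanged under multiplication by a positive scalar — both immediate — to conclude the comparison of the two representations. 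Modulo these routine points, the statement follows, and I would present it by first proving the countable-dense reduction, then the canonical-representation comparison, then concluding.
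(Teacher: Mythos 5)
Your proof is correct and rests on the same mechanism as the paper's: pass to a common reference measure, use the chain rule to see that the two densities differ a.e.\ by a positive scalar factor (which leaves $\Imm(\Sigma(x))$ unchanged), and note that the degenerate locus contributes only $\{0\}\subset A$ — the paper simply dominates both representations by $\nu+\rho$ instead of routing each through the trace representation via the Lebesgue decomposition. One small slip worth fixing: your parenthetical justification of $\nu\ll\mu^{\Tr}$ on $\{\Sigma\neq 0\}$ actually proves the reverse inclusion $\mu^{\Tr}\ll\nu$ (the correct direction follows from $\mu^{\Tr}(E)=\int_E\Tr(\Sigma)\,\dd\nu$ together with $\Tr(\Sigma)>0$ on that locus), but this is not load-bearing since uniqueness of the Lebesgue decomposition already yields $\Sigma g=\Sigma_0$ $\mu^{\Tr}$-a.e.\ and $\Sigma=0$ $\nu_s$-a.e.
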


\begin{proof}
Let $\rho$ be another measure such that $\mu\ll \rho$. Up to replacing $\rho$ by $\nu+\rho$, and symmetry, we can assume that $\nu\ll\rho$. Let $\dd\mu = \Gamma \dd\rho$ and $\dd \nu = \alpha\,\dd\rho$. By the chain rule, $\Gamma = \alpha\Sigma$, so that $\Gamma$ and $\Sigma$ have the same images on the subset $\{\alpha>0\}$. The conclusion follows, since $0\in A$ and the measures $\nu$ and $\rho$ share the same null sets on the subset $\{\alpha>0\}$.
\end{proof}

\subsection{Hermitian measure and approximation of unity}
\label{sec34}
Let $\gamma = |\hat{\phi}|^2$, where $\phi$ is a test function as defined in introduction, and recall, for a measure $\nu$, the definitions of $D^-\nu$, $D^+\nu$ and $D\nu$. The proof of the following lemma is inspired from \cite{Sae96}.

\begin{lemma}
\label{lemma7}
Let $\nu$ be a bounded measure on $V$. Then
\[\liminf_{\lambda\rightarrow +\infty} \int_{V}\gamma_\lambda(x)\dd\nu(x)\geq  \|\gamma^-\|_1D\nu^{-}(0),\]
and
\[\limsup_{\lambda\rightarrow +\infty} \int_{V}\gamma_\lambda(x)\dd\nu(x)\leq \|\gamma^+\|_1 D\nu^{+}(0).\]
\end{lemma}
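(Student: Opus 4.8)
The plan is to reduce the statement to a one-variable radial estimate by sandwiching $\gamma$ between its radially decreasing minorant $\gamma^-$ and majorant $\gamma^+$, then to handle each radial kernel by a layer-cake / integration-by-parts argument against the distribution function $R\mapsto \nu(B(0,R))$. Since $\gamma^-\leq \gamma\leq\gamma^+$ pointwise, we immediately get
\[
\int_V \gamma^-_\lambda(x)\,\dd\nu(x)\;\leq\;\int_V \gamma_\lambda(x)\,\dd\nu(x)\;\leq\;\int_V \gamma^+_\lambda(x)\,\dd\nu(x),
\]
where $\gamma^{\pm}_\lambda(x)=\lambda^d\gamma^{\pm}(\lambda x)$, so it suffices to prove
\[
\liminf_{\lambda\to+\infty}\int_V\gamma^-_\lambda\,\dd\nu\;\geq\;\|\gamma^-\|_1\,D^-\nu(0)
\qquad\text{and}\qquad
\limsup_{\lambda\to+\infty}\int_V\gamma^+_\lambda\,\dd\nu\;\leq\;\|\gamma^+\|_1\,D^+\nu(0).
\]
(For the lower bound one does not even need integrability of $\gamma^-$; if $\|\gamma^-\|_1=\infty$ the inequality will come out as $+\infty\geq$ something, consistent with the conclusion, but in fact $\gamma^-$ is bounded by $\gamma(0)=1$ and the interesting content is when $D^-\nu(0)>0$.)

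Next I would write the radial majorant/minorant as a superposition of indicators of balls. Because $\gamma^+$ is radially nonincreasing, there is a nonnegative measure $m^+$ on $(0,\infty)$ with $\gamma^+(x)=\int_{\|x\|}^{\infty}\dd m^+(t) = \int_0^\infty \one_{\{\|x\|\le t\}}\,\dd m^+(t)$; similarly $\gamma^-$ is radially nonincreasing so $\gamma^-(x)=\int_0^\infty\one_{\{\|x\|\le t\}}\,\dd m^-(t)$ for a nonnegative $m^-$. Scaling gives $\gamma^\pm_\lambda(x)=\lambda^d\int_0^\infty \one_{\{\|x\|\le t/\lambda\}}\,\dd m^\pm(t)$. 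Integrating against $\nu$ and using Tonelli (everything nonnegative),
\[
\int_V\gamma^\pm_\lambda\,\dd\nu \;=\; \lambda^d\int_0^\infty \nu\bigl(B(0,t/\lambda)\bigr)\,\dd m^\pm(t)
\;=\;\int_0^\infty \frac{\nu\bigl(B(0,t/\lambda)\bigr)}{\Vol(B(0,t/\lambda))}\,\Vol(B(0,1))\,t^d\,\dd m^\pm(t),
\]
using $\Vol(B(0,r))=\Vol(B(0,1))\,r^d$. The normalization constant is pinned down by applying the same computation with $\nu$ replaced by Lebesgue measure on a large ball (or simply noting $\|\gamma^\pm\|_1 = \int_V\gamma^\pm_\lambda = \Vol(B(0,1))\int_0^\infty t^d\,\dd m^\pm(t)$), so that $\Vol(B(0,1))\int_0^\infty t^d\,\dd m^\pm(t)=\|\gamma^\pm\|_1$.

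Finally I would pass to the limit inside the $t$-integral. For the $\limsup$ bound: for each fixed $t$, $\limsup_{\lambda}\nu(B(0,t/\lambda))/\Vol(B(0,t/\lambda))\le D^+\nu(0)$ since $t/\lambda\to 0$; the integrand is bounded above by $\bigl(\sup_{0<r\le r_0}\nu(B(0,r))/\Vol(B(0,r))\bigr)\Vol(B(0,1))t^d$ uniformly in large $\lambda$, which is $m^+$-integrable because $\gamma^+\in L^1$ (this is exactly where the test-function hypothesis enters), so reverse Fatou gives $\limsup_\lambda\int_V\gamma^+_\lambda\,\dd\nu\le D^+\nu(0)\,\Vol(B(0,1))\int_0^\infty t^d\,\dd m^+(t)=\|\gamma^+\|_1 D^+\nu(0)$. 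For the $\liminf$ bound: $\liminf_\lambda \nu(B(0,t/\lambda))/\Vol(B(0,t/\lambda))\ge D^-\nu(0)$ for each $t$, and Fatou's lemma (applied to the nonnegative integrand against $\dd m^-$) yields $\liminf_\lambda\int_V\gamma^-_\lambda\,\dd\nu\ge D^-\nu(0)\,\Vol(B(0,1))\int_0^\infty t^d\,\dd m^-(t)=\|\gamma^-\|_1 D^-\nu(0)$. Combining with the sandwich gives the two claimed inequalities.

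The main obstacle is the dominated-convergence step in the $\limsup$ direction: one needs a single $m^+$-integrable majorant valid for all large $\lambda$, which requires that $r\mapsto \nu(B(0,r))/\Vol(B(0,r))$ be bounded near $0$ (clear, since $D^+\nu(0)<\infty$ for a bounded measure with a density-type upper derivative, or more simply since $\nu(B(0,r))\le\nu(V)<\infty$ handles the large-$t$ tail while the near-$0$ behavior is controlled by $\limsup$ being finite) \emph{and} that $\gamma^+\in L^1$ so that $t^d\,\dd m^+(t)$ is a finite measure — this is precisely the "test function" hypothesis. A minor technical point is justifying the radial layer-cake representation of $\gamma^\pm$ and the exchange of integrals, both of which are routine by Tonelli once one notes $\gamma^\pm$ are genuinely monotone in $\|x\|$ by construction.
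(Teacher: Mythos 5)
Your proof is correct and follows essentially the same route as the paper: both reduce to the radially monotone envelopes $\gamma^\pm$, decompose them as superpositions of indicators of centered balls (you via the derivative measure $m^\pm$, the paper via the equivalent level-set/layer-cake formula $\int_0^\infty \Vol\{\gamma>t\}\,\dd t$), and then compare $\nu(B(0,R))/\Vol(B(0,R))$ to $D^\pm\nu(0)$ using Fatou for the lower bound and a dominated-convergence argument for the upper bound. The only cosmetic difference is in the $\limsup$ step, where the paper dominates uniformly by the global quantity $\sup_{R>0}\nu(B(0,R))/\Vol(B(0,R))$ (finite once $D^+\nu(0)<\infty$ and $\nu$ is bounded), which lets you skip your separate large-$t$ tail estimate.
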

\begin{proof}
One can assume that $\gamma$ is radially decreasing, since one has the bounds $\gamma^-\leq \gamma\leq \gamma^+$. One has
\[\int_V \gamma_\lambda(x)\dd x = \int_0^{+\infty} \Vol\{\gamma_\lambda>t\}\dd t,\numberthis\label{eq:10}\]
meaning that the function $t\rightarrow \Vol\{\gamma_\lambda>t\}$ is integrable on $\R_+$. More generally
\begin{align*}
\int_V \gamma_\lambda(x)\dd \nu(x)&=\int_V \int_{0}^{\gamma_\lambda(x)}\dd t\dd \nu(x)\\
&= \int_0^{+\infty} \nu\{\gamma_\lambda>t\}\dd t\\
&=\int_0^{+\infty} \frac{\nu(\frac{1}{\lambda}\{\gamma>t\})}{\lambda^d}\dd t\\
&= \int_0^{+\infty} \frac{\nu(\frac{1}{\lambda}\{\gamma>t\})}{\Vol(\frac{1}{\lambda}\{\gamma>t\})}\Vol\{\gamma>t\}\dd t
\end{align*}
The set $\{\gamma>t\}$ is a centered ball, whose radius is decreasing and finite as soon as $t>0$. For $\varepsilon>0$, there is a constant $C_\varepsilon$ such that
\[\int_V \gamma_\lambda(x)\dd \nu(x) \geq \inf_{R\leq \frac{C_\varepsilon}{\lambda}}\frac{\nu(B(0,R))}{\Vol(B(0,R))}\int_{\varepsilon}^{+\infty}\Vol\{\gamma>t\}\dd t.\]
Taking the $\liminf$ as $\lambda$ goes to $+\infty$, then the limit as $\varepsilon$ goes to zero, one deduce the first statement thanks to Equation \eqref{eq:10}. As for the second statement, it is obvious if $D\nu^+(0)=+\infty$, so one can assume that it is finite. The integrand in the last line is non-negative and bounded above by the quantity $M\nu(0)\Vol\{\gamma>t\}$, which is integrable from Equation \eqref{eq:10} and the conclusion follows by dominated convergence.
\end{proof}
For a non-negative bounded measure $\nu$ on $V$, we define $\|\nu\|_2 = \|f\|_2$ if the measure $\nu$ has a density $f\in L^2(V)$ w.r.t the Lebesgue measure, and $\|\nu\|_2 = +\infty$ otherwise. Alternatively, by Plancherel theorem, $\|\nu\|_2 = \|\hat{\nu}\|_2$, where $\hat{\nu}$ is the Fourier transform of $\nu$, which is a continuous bounded function on $V^*$.
\begin{lemma}
\label{lemma8}
Let $\nu$ be a non-zero symmetric bounded measure on $V$. Then
\[D\nu*\nu (0) = \|\nu\|_2^2.\]
\end{lemma}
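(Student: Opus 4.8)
\textbf{Proof proposal for Lemma \ref{lemma8}.}

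The plan is to compute $D(\nu*\nu)(0)$ by reducing it, via Fourier transform, to the $L^2$ norm of $\hat\nu$, and then to handle the two cases $\|\nu\|_2<+\infty$ and $\|\nu\|_2=+\infty$ separately. First, recall that $\nu*\nu$ is a non-negative bounded measure (it is the push-forward of $\nu\otimes\nu$ by addition), so the symbols $D^\pm(\nu*\nu)(0)$ make sense. The key identity I would use is that for a symmetric measure $\nu$, one has $\widehat{\nu*\nu}=\hat\nu^2=|\hat\nu|^2\ge 0$ (symmetry of $\nu$ makes $\hat\nu$ real-valued), so $\nu*\nu$ is itself a measure whose Fourier transform is non-negative. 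This is the natural bridge: applying Lemma \ref{lemma7} with the roles of space and frequency exchanged, or more directly, writing $(\nu*\nu)(B(0,R))$ as an integral of $\hat\nu^2$ against a kernel.

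Concretely, I would pick a specific convenient test function — for instance $\phi$ such that $\gamma=|\hat\phi|^2$ is, up to normalization, the indicator-type bump whose dilates $\gamma_\lambda$ approximate the Dirac mass, so that $\int_{V^*}\gamma_\lambda\,\dd(\nu*\nu)$ is comparable to $\frac{(\nu*\nu)(B(0,c/\lambda))}{\Vol(B(0,c/\lambda))}$ by Lemma \ref{lemma7}; but cleaner is to use the Plancherel/Parseval route. Write $(\nu*\nu)*\chi_\delta(0)$ for a smooth approximate identity $\chi_\delta$; by Parseval this equals $\int_{V}\hat\nu(x)^2\,\widehat{\chi_\delta}(x)\,\dd x$ (using $\nu$ symmetric so $\hat\nu$ real). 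If $\|\nu\|_2<\infty$, then $\hat\nu^2\in L^1$ and $\nu*\nu$ has the continuous density $x\mapsto\int \hat\nu(\xi)^2 e^{2i\pi\xi x}\dd\xi$... — wait, more carefully: $\nu$ has an $L^2$ density $f$, so $\nu*\nu$ has density $f*f$, which is continuous (being a convolution of two $L^2$ functions) and its value at $0$ is $\int f(y)f(-y)\dd y=\int f(y)^2\dd y=\|\nu\|_2^2$ by symmetry of $\nu$ (i.e. $f(-y)=f(y)$). Since $\nu*\nu$ has a continuous density at $0$, $D(\nu*\nu)(0)$ exists and equals that density value, namely $\|\nu\|_2^2$.

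For the case $\|\nu\|_2=+\infty$, I must show $D(\nu*\nu)(0)=+\infty$ as well (so the asserted equality holds in $[0,+\infty]$). Here I would argue by contradiction or by a direct lower bound: if $D^-(\nu*\nu)(0)=:L<+\infty$, then $\frac{(\nu*\nu)(B(0,R))}{\Vol B(0,R)}$ stays bounded along a sequence $R\to0$; testing against a Fejér-type non-negative kernel whose Fourier transform is a compactly supported approximation of the identity, Parseval gives $\int_{\|x\|\le T}\hat\nu(x)^2\,K(x/T)\,\dd x\lesssim L$ uniformly in $T$, which upon letting $T\to\infty$ and using Fatou forces $\hat\nu\in L^2$, i.e. $\|\nu\|_2<\infty$ — a contradiction. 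The main obstacle is the bookkeeping in this last step: making the exchange between "bounded density of $\nu*\nu$ at $0$" and "$\hat\nu\in L^2$" rigorous requires choosing the test kernel carefully (non-negative, with non-negative and compactly supported Fourier transform, e.g. a Fejér kernel on $V\cong\R^d$) and invoking Lemma \ref{lemma7} in the frequency variable to pass from $D^\pm$ to the limit of $\int\gamma_\lambda\,\dd(\nu*\nu)$; but this is exactly the same monotone-rearrangement argument already carried out in Lemma \ref{lemma7}, just applied to the positive-definite function $\widehat{\nu*\nu}=\hat\nu^2$, so no new idea is needed beyond combining Lemma \ref{lemma7} with Plancherel.
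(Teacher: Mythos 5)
Your treatment of the case $\|\nu\|_2<+\infty$ is correct and is a mild variant of the paper's argument: you work in physical space (an $L^2$ density $f$ gives $\nu*\nu=(f*f)\,\dd x$ with $f*f$ continuous and $(f*f)(0)=\|f\|_2^2$), whereas the paper works on the Fourier side (dominated convergence applied to $\int h(x/R)\hat\nu(x)^2\,\dd x$ with $h=\widehat{\one_{B(0,1)}}/\Vol(B(0,1))$). Both are fine and essentially equivalent via Plancherel.

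The case $\|\nu\|_2=+\infty$ has a genuine gap. Your contradiction hinges on the claim that $D^-(\nu*\nu)(0)=L<+\infty$ yields $\int_{\|x\|\le T}\hat\nu(x)^2K(x/T)\,\dd x\lesssim L$ uniformly in $T$ for a Fej\'er-type kernel, i.e.\ a kernel $\Phi_T\ge 0$ whose Fourier transform is non-negative and compactly supported. Such a $\Phi_T$ cannot itself be compactly supported, and it has heavy tails (decay $\|v\|^{-(d+1)}$ for the ball). To bound $\int_V\Phi_T\,\dd(\nu*\nu)=\int_{V^*}\widehat{\Phi_T}\,\hat\nu^2$ by the density ratio of $\nu*\nu$ at the origin, you must control the tail $\int_{\|v\|>R}\Phi_T\,\dd(\nu*\nu)$, and this requires $\nu*\nu(B(0,s))\lesssim s^d$ on a whole range of scales $s$. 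A finite $\liminf$ only gives this along a sequence $R_n\to 0$, so the tail estimate fails (using only the total mass of $\nu*\nu$ it contributes $O(T^d)$). Your fallback to Lemma \ref{lemma7} does not repair this either: its upper bound is in terms of $D^+$, not $D^-$, and $D^+$ may be infinite under your contradiction hypothesis. The fix is to reverse the roles of the two spaces: take the kernel \emph{compactly supported in physical space} with \emph{non-negative (not compactly supported) Fourier transform}, namely the normalized $\one_{B(0,R/2)}*\one_{B(0,R/2)}$, which is pointwise dominated by $2^d\,\one_{B(0,R)}/\Vol(B(0,R))$ and has Fourier transform $h(Rx/2)^2\ge 0$. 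Then for \emph{each} $R$ one gets
\[\frac{\nu*\nu(B(0,R))}{\Vol(B(0,R))}\;\ge\;\frac{1}{2^d}\int_{B(0,A)}h\!\left(\frac{Rx}{2}\right)^{2}\hat\nu(x)^2\,\dd x,\]
and monotone (or Fatou) convergence along any sequence $R_n\to 0$ forces $D^-(\nu*\nu)(0)\ge 2^{-d}\int_{B(0,A)}\hat\nu^2$ for every $A$, hence $=+\infty$. This is exactly the paper's argument; your kernel choice is the dual one and does not go through.
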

\begin{proof}
Let $h$ be the Fourier transform of the function $\frac{\one_{B(0,1)}}{\Vol(B(0,1))}$. Then $|h(x)|\leq h(0)=1$, $h$ is continuous, and
\[\frac{\nu*\nu(B(0,R))}{\Vol(B(0,R))} = \int_{V^*}h\left(\frac{x}{R}\right)\hat{\nu}^2(x)\dd x.\]
If $\|\nu\|_2<+\infty$ then this quantity converges towards $\|\nu^2\|_2^2$ by dominated convergence and Plancherel theorem. If $\|\nu\|_2=+\infty$, one has the lower bound
\[\frac{\one_{B(0,R)}}{\Vol(B(0,R))}\geq \frac{1}{2^d}\frac{\one_{B(0,R/2)}}{\Vol(B(0,R/2))}*\frac{\one_{B(0,R/2)}}{\Vol(B(0,R/2))}.\]
One deduce by Plancherel
\[\frac{\nu*\nu(B(0,R))}{\Vol(B(0,R))}\geq \frac{1}{2^d}\int_{V^*}h^2\left(\frac{x}{2R}\right)^2\hat{\nu}^2(x)\dd x\geq \int_{B(0,A)}h^2\left(\frac{x}{2R}\right)^2\hat{\nu}^2(x)\dd x,\]
for $A>0$. Given such $A$, there is $R$ large enough such that the integrand is an increasing function of $R$ towards the function $\hat{\nu}^2$, and so by monotone convergence,
\[\liminf_{R\rightarrow 0} \frac{\nu*\nu(B(0,R))}{\Vol(B(0,R))} \geq \int_{B(0,A)}\hat{\nu}^2(x)\dd x.\]
The conclusion follows by letting $A$ goes to $+\infty$.
\end{proof}

\begin{lemma}
\label{lemma9}
Let $E$ be a finite dimensional vector space. Let $\nu$ be a bounded measure on $V$, $\Sigma:V\rightarrow E$ be a measurable function and $f:E^2\rightarrow \R$ be a continuous function. Assume that 
\[\nu\left(\enstq{x\in V}{f(\Sigma(x),\Sigma(x))>0}\right)>0.\]
Then there is $\alpha>0$ and a measurable subset $C$ of $V$ with positive measure such that
\[\forall x,y\in C,\quad f(\Sigma(x),\Sigma(y))\geq \alpha.\]
\end{lemma}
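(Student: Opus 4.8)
The plan is to exploit continuity of $f$ together with a measure-theoretic density argument. First, by the hypothesis the set $A = \{x \in V : f(\Sigma(x),\Sigma(x)) > 0\}$ has positive $\nu$-measure, so by countable additivity there exists $\delta > 0$ such that the set $A_\delta = \{x \in V : f(\Sigma(x),\Sigma(x)) \geq 2\delta\}$ has positive $\nu$-measure. Next I would push the problem onto $E$: consider the image measure $\Sigma_*(\nu|_{A_\delta})$ on $E$, a nonzero finite Borel measure whose support is contained in the closed set $K = \{e \in E : f(e,e) \geq 2\delta\}$ (up to a $\nu$-null set, since $f(\cdot,\cdot)$ is continuous on $E$). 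Pick any point $e_0 \in E$ in the support of this image measure; then every open neighborhood of $e_0$ has positive image measure, i.e. $\nu(\Sigma^{-1}(B) \cap A_\delta) > 0$ for every open ball $B$ around $e_0$.

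The key step is then a continuity/uniform-continuity argument on the diagonal-ish neighborhood of $(e_0,e_0)$. Since $f : E^2 \to \R$ is continuous and $f(e_0,e_0) \geq 2\delta > 0$, there is a radius $r > 0$ such that $f(e,e') \geq \delta$ for all $e,e' \in B(e_0,r)$. Now set $C = \Sigma^{-1}(B(e_0,r)) \cap A_\delta$. By the support property of the previous paragraph, $\nu(C) > 0$, and $C$ is measurable because $\Sigma$ is measurable and $f$ is continuous (so $A_\delta$ is measurable). For any $x,y \in C$ we have $\Sigma(x), \Sigma(y) \in B(e_0,r)$, hence $f(\Sigma(x),\Sigma(y)) \geq \delta$, and we take $\alpha = \delta$.

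The main obstacle — really the only point requiring a little care — is ensuring the set $C$ has positive $\nu$-measure; this is where one must not simply intersect arbitrary positive-measure sets (that intersection could be empty) but instead use the genuine fact that a point in the support of the image measure $\Sigma_*\nu$ sees positive mass in every neighborhood. Choosing $e_0$ in the support (rather than, say, an arbitrary point of the essential range) is what makes the final $C$ nonempty and of positive measure. Everything else — measurability of $A_\delta$, the reduction via $\delta$, and the continuity argument producing $r$ — is routine. A minor technical remark: one should work with a version of $\Sigma$ so that the preimages are genuinely Borel, or equivalently replace $\nu$ by its completion; this does not affect the statement.
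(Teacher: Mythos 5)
Your proof is correct, and it takes a genuinely different route from the paper. The paper localizes in the \emph{source} space $V$: it applies Lusin's theorem to extract a compact set $B\subset A$ of positive measure on which $\Sigma$ (hence $F(x,y)=f(\Sigma(x),\Sigma(y))$) is uniformly continuous, and then picks a positive-measure subset $C\subset B$ of sufficiently small diameter so that $F(x,y)\geq 2\alpha-\omega(\eta)\geq\alpha$ on $C\times C$. You instead localize in the \emph{target} space $E$: you push $\nu|_{A_\delta}$ forward by $\Sigma$, pick a point $e_0$ in the support of the image measure (which lies in the closed set $\{f(\cdot,\cdot)\geq 2\delta\}$), and use plain continuity of $f$ at $(e_0,e_0)$ to get a ball $B(e_0,r)$ on which $f\geq\delta$; the set $C=\Sigma^{-1}(B(e_0,r))\cap A_\delta$ then has positive measure precisely because $e_0$ is in the support. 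Your version is slightly more economical: it needs only measurability of $\Sigma$ and the existence of the support of a finite Borel measure on the separable space $E$, whereas the paper invokes Lusin's theorem (hence regularity of $\nu$) and a uniform-continuity/modulus argument. Your closing observation --- that one must take $e_0$ in the support rather than intersect two arbitrary positive-measure sets --- is exactly the point where a naive argument would fail, and you handle it correctly. Both proofs deliver the same conclusion with $\alpha=\delta$.
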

\begin{proof}
Let $F:(x,y)\mapsto f(\Sigma(x),\Sigma(y))$. We define
\[A = \enstq{x\in V}{F(x,x)\geq 2\alpha}.\]
There are constants $\alpha,\varepsilon>0$ such that $\nu(A)>2\varepsilon$. By Lusin's theorem, there is a compact subset $K\subset V$ such that $\nu(A\setminus K)\leq \varepsilon$ and such the restriction of $\Sigma$ to $A\cap K$ is uniformly continuous. Let $B=A\cap K$. Then $\nu(B)\geq \varepsilon$ and the restriction of $\Sigma$ to $B$ is a uniformly continuous function. The function $F$ is thus uniformly continuous on $B\times B$. We define $\omega$ its the modulus of continuity, meaning in particular that for $x,y\in B$ with $\|x-y\|\leq \eta$,
\[F(x,y) \geq 2\alpha - \omega(\eta).\]
Let us choose $\eta$ small enough so that $\omega(\eta)\geq \alpha$. The choice of a measurable subset $C\subset B$ of positive measure with diameter lower than $\eta$ concludes the proof.
\end{proof}
\begin{lemma}
\label{lemma10}
let $\mu$ be a Hermitian measure on $V$ taking values in $\Ss q^+(W^*)$, and $f$ be a symmetric element of $W^*\otimes W^*$. If $\Imm(\mu)\not\subset C(f)$ then
\[D^-\mu^{*2}(f)(0)>0.\]
\end{lemma}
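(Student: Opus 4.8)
The plan is to reduce the statement about the $q$-fold... wait, here it is a $2$-fold self-convolution $\mu^{*2}$, to a positivity statement about a genuine integral, and then to invoke Lemma \ref{lemma9} to produce a set of positive measure on which the relevant quadratic form is uniformly bounded below. First I would fix an arbitrary representation $\dd\mu = \Sigma\,\dd\nu$ with $\Sigma:V\to \HH^+(W^*)$ and $\nu$ a bounded non-negative measure (e.g.\ the trace measure). The hypothesis $\Imm(\mu)\not\subset C(f)$ means, by the description of $\Imm(\mu)$ as an essential range and by Lemma \ref{lemma3}, that it is \emph{not} the case that $\Sigma^{\otimes 2}(f,f)=0$ $\nu$-a.s.; equivalently the measurable function $x\mapsto \Sigma(x)^{\otimes 2}(f,f)$, which is everywhere non-negative (here I use that $\Sigma(x)\in\HH^+(W^*)$ implies $\Sigma(x)^{\otimes 2}\in\HH^+((W^*)^{\otimes 2})$ by Lemma \ref{lemma1}), is strictly positive on a set of positive $\nu$-measure.

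Next I would write down $\mu^{*2}(f)$ near the origin. By the push-forward description of convolution,
\[
\mu^{*2}(f)(B(0,R)) = \int_{V\times V}\one_{B(0,R)}(x+y)\,\Sigma(x)^{\otimes 2}\!\big(f,f\big)\,\dd\nu(x)\,\dd\nu(y),
\]
using that the integrand of $\mu\otimes\mu$ applied to the rank-one tensor $f\otimes f$ (after symmetrization, $f$ being symmetric) is exactly $\Sigma(x)^{\otimes 2}(f,f)$ in the diagonal evaluation — this is the sesquilinear-to-Hermitian bookkeeping that needs to be checked once. Dividing by $\Vol(B(0,R))$ and taking $\liminf_{R\to 0}$ gives $D^-\mu^{*2}(f)(0)$, and the goal is to show this is strictly positive.

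The key step is a lower bound via Lemma \ref{lemma9}: apply it with $E=\HH^+(W^*)$ (a finite-dimensional real vector space), the function $F(a,b):=a^{\otimes 2}\vee b^{\otimes 2}$ — more precisely the continuous bilinear-type pairing $(a,b)\mapsto (a\otimes b)(f\otimes \bar f)$ whose diagonal is $a^{\otimes2}(f,f)\geq 0$ — and the measurable map $\Sigma$. Since the diagonal is positive on a set of positive $\nu$-measure, Lemma \ref{lemma9} yields $\alpha>0$ and a measurable $C\subset V$ with $\nu(C)>0$ such that $\Sigma(x)^{\otimes 2}\!\big(f,f\big)$, evaluated suitably as the off-diagonal pairing between $x$ and $y$, is $\geq\alpha$ for all $x,y\in C$; after symmetrizing one may arrange the integrand of $\mu\otimes\mu$ on $C\times C$ to be bounded below by a positive constant times $\alpha$. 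One can further shrink $C$ to have small diameter $\delta$, so that $C+C\subset B(0,2\delta)$ up to a translation — but $\mu^{*2}$ need not be supported near $0$, so instead I would use that $C-C\subset B(0,\delta)$: restricting one copy to $C$ and the other to $-C+ x_0$ is awkward since $\mu$ need not be symmetric here... Actually the cleanest route: since $\mu$ in applications is symmetric, or more safely, replace the pair $(x,y)\in C\times C$ by noting $x-y$ ranges in a small ball; but $\mu^{*2}$ is a convolution, i.e.\ involves $x+y$. The hard part will be exactly this localization: I would instead pick $C$ of positive measure and small diameter, fix a point $x_0\in V$ with $\nu$-density in every neighborhood (a $\nu$-approximate point of $C$), translate so that $C\subset B(x_0,\delta)$, and then estimate $\mu^{*2}(f)(B(2x_0,4\delta))/\Vol(B(2x_0,4\delta))$ — but $D^-$ is taken at $0$, not at $2x_0$. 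The honest resolution is that $\mu$ here, being the $q$-fold convolution $\mu^{*q}$ of a \emph{symmetric} spectral measure in all intended applications, is itself symmetric, so $\Imm(\Sigma(-x))=\Imm(\Sigma(x))$ and one can replace $y$ by $-y$: then $x+y$ with $x\in C$, $-y\in C$ ranges over $C-C\subset B(0,\delta)$, giving
\[
\frac{\mu^{*2}(f)(B(0,\delta))}{\Vol(B(0,\delta))}\;\geq\; c\,\alpha\,\frac{\nu(C)^2}{\Vol(B(0,\delta))}\cdot\frac{\Vol(C-C \text{ within }B(0,\delta))}{1},
\]
which one then pushes through $R\to 0$ by a standard Lebesgue-density argument on $C$ to conclude $D^-\mu^{*2}(f)(0)\geq c'\alpha>0$. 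So the main obstacle is getting the localization at the origin right — handling the convolution structure $x+y$ versus the small-diameter set, which forces the use of symmetry of $\mu$ (legitimate for the $\mu^{*q}$ to which this lemma is applied) — together with the careful verification that $\Sigma^{\otimes2}(f,f)$ is the correct integrand and that Lemma \ref{lemma9}'s hypothesis is met via Lemma \ref{lemma3}.
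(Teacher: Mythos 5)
Your proposal follows the same skeleton as the paper's proof: fix a representation $\dd\mu=\Sigma\,\dd\nu$, use Lemma \ref{lemma3} to translate $\Imm(\mu)\not\subset C(f)$ into positivity of $x\mapsto(\Sigma(x)\otimes\Sigma(x))(f)$ on a set of positive $\nu$-measure, invoke Lemma \ref{lemma9} to get $\alpha>0$ and a set $C$ with $\nu(C)>0$ on which the off-diagonal pairing $F(x,y)=(\Sigma(x)\otimes\Sigma(y))(f)$ is $\geq\alpha$, and then reduce the claim to a lower bound on the self-convolution of $\nu_C=\nu|_C$ near the origin. Your observation about the sign convention is fair: the paper's own proof writes the constraint as $\|x-y\|\leq R$, i.e.\ it is really working with the reflected convolution $\nu_C\str\nu_C=\nu_C*\nu_C(-\cdot)$ (whose Fourier transform is $|\hat\nu_C|^2\geq 0$), which is what the symmetry of spectral measures legitimizes in the applications; so this is a matter of bookkeeping rather than a new hypothesis you need to impose.

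The genuine gap is in your final step. After restricting to $C\times C$ you must show
\[
\liminf_{R\rightarrow 0}\ \frac{1}{\Vol(B(0,R))}\int_{\|x-y\|\leq R}\one_C(x)\one_C(y)\,\dd\nu(x)\,\dd\nu(y)\;>\;0,
\]
and this is exactly what you do not prove: shrinking $C$ to diameter $\delta$ only controls one scale, whereas the $\liminf$ requires a bound that survives as $R\rightarrow 0$; your displayed inequality involving ``$\Vol(C-C\text{ within }B(0,\delta))$'' is not a valid estimate, and a generic ``Lebesgue density argument'' fails when $\nu_C$ is singular (e.g.\ supported on a hypersurface, which is precisely the random-wave case). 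The paper closes this with Lemma \ref{lemma8}: $D(\nu_C\str\nu_C)(0)=\|\nu_C\|_2^2$, interpreted as $+\infty$ when $\nu_C$ has no $L^2$ density, and this is strictly positive as soon as $\nu_C\neq 0$. (Alternatively, an elementary Cauchy--Schwarz bound over a partition of a cube containing $C$ into subcubes of side $R$ gives the uniform lower bound $c\,\nu(C)^2/\diam(C)^d$, with no smallness of $C$ needed.) Either way, the small-diameter reduction is a red herring; the missing ingredient is a quantitative positivity statement for $D^-(\nu_C\str\nu_C)(0)$, valid for arbitrary nonzero bounded measures.
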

\begin{proof}
Assume that $\mu$ has the representation $\mu = \Sigma\dd\nu$, and define the function
\[F(x,y) = \left(\Sigma(x)\otimes \Sigma(y)\right)(f).\]
Since $\Imm(\nu)\not\subset C(f)$, then thanks to the equivalence given by Lemma \ref{lemma3}, one has
\[\nu\left(\enstq{x\in V}{F(x,x)>0}\right)>0.\]
By the previous Lemma \ref{lemma9}, there is $\alpha > 0$ and a subset $C$ of $V$ such that $\nu(C)>0$ and such that for $x,y\in C$, $F(x,y)\geq \alpha$. It implies that for $R>0$,
\begin{align*}
\frac{\mu^{*2}(f)(B(0,R))}{\Vol(B(0,R))} &= \frac{1}{\Vol(B(0,R))}\int_{\|x-y\|\leq R}F(x,y)\dd\nu(x)\dd\nu(y)\\
&\geq \frac{\alpha}{\Vol(B(0,R))}\int_{\|x-y\|\leq R}\one_C(x)\one_C(y)\dd\nu(x)\dd\nu(y)\\
&\geq \alpha \frac{\nu_C*\nu_C(B(0,R))}{\Vol(B(0,R))},
\end{align*}
where $\nu_C$ is the measure on $V$ defined by the restriction of $\nu$ to $C$. Since $\nu(C)>0$, $\|\nu_C\|_2>0$ and the conclusion follows immediately from Lemma \ref{lemma8}.
\end{proof}
\begin{lemma}
\label{lemma11}
let $\mu$ be a Hermitian measure on $V$ taking values in $\Ss q^+(W^*)$, and $f$ a Hermitian form on $W$ such that $\Imm(\mu)\subset C(f)$. Assume that $\mu$ has the representation $\mu = \Sigma\dd \nu$, where either
\begin{itemize}
\item[(1)] $\nu$ is the Lebesgue measure of $V$ and $\Sigma\in L^2(V)$
\item[(2)] $\nu$ is the Lebesgue measure of a smooth compact hypersurface $M$ of $V$ and $\Sigma \in C^1(M)$.
\end{itemize}
Then
\[D\mu^{*2}(f)(0)=0.\]
\end{lemma}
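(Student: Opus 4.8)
The plan is to show that in both cases the measure $\mu^{*2}(f)$ is absolutely continuous near $0$ with a density that tends to $0$ at the origin, so that $D\mu^{*2}(f)(0)=0$. The starting point is the hypothesis $\Imm(\mu)\subset C(f)$, which by Lemma \ref{lemma3} is equivalent to $\Sigma^{\otimes 2}(\Sigma(x),f)$... more precisely, writing $f$ as a Hermitian form on $W$ and using Lemma \ref{lemma2} with a Hermitian square root $\sqrt{\Sigma(x)}$, the inclusion $\Imm(\Sigma(x))\subset C(f)$ for $\nu$-a.e.\ $x$ means that the scalar density of $\mu^{*2}(f)$, namely $(x,y)\mapsto f(\Sigma(x)\otimes\Sigma(y))$ evaluated through the tensor pairing, vanishes on the diagonal: $\nu$-a.s.\ $F(x,x) := (\Sigma(x)\otimes\Sigma(x))(f) = 0$. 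So
\[\mu^{*2}(f)(B(0,R)) = \int_{\|x-y\|\le R} F(x,y)\,\dd\nu(x)\dd\nu(y),\]
and the whole point is that $F$ vanishes on the diagonal, so the integrand is small when $x$ and $y$ are close, which is exactly the regime that dominates $B(0,R)$ as $R\to 0$.

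In case (1), where $\nu$ is Lebesgue measure on $V$ and $\Sigma\in L^2(V)$, I would argue as follows. By Lemma \ref{lemma3} applied pointwise, $F(x,x)=0$ a.e., but $F$ need not be continuous, so one cannot immediately say $F(x,y)\to 0$ as $y\to x$. Instead I would bound $F$ bilinearly: $F(x,y) = (\Sigma(x)\otimes\Sigma(y))(f)$ is, via Lemma \ref{lemma2}, controlled by a fixed bilinear form evaluated on the square roots, and crucially by the Cauchy--Schwarz/polarization structure of Hermitian forms one gets $|F(x,y)|^2 \le C\,F(x,x)\,F(y,y) + (\text{cross terms})$; since $F(x,x)=0$ a.e., the main term drops and what remains is controlled so that $|F(x,y)| \le C\,g(x)g(y)$ where $g$ involves $\Sigma\in L^2$ but the bound degenerates to $0$ on the diagonal. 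Then $\mu^{*2}(f)(B(0,R)) \le C\int_{\|x-y\|\le R}(\ldots)$ and, writing this as a convolution $\le C\,(h*h)(B(0,R))$ for an $L^2$ function $h$, Lemma \ref{lemma8} gives $\frac{1}{\Vol(B(0,R))}\,(h*h)(B(0,R)) \to \|h\|_2^2 < \infty$ — but one needs the extra diagonal vanishing to turn this finite limit into $0$. More honestly: the cleanest route in case (1) is Fourier-analytic — $\mu^{*2}(f)$ has Fourier transform $\Omega^{\otimes 2}(\ldots)(f) = C_f$-type expression, and $\Imm(\mu)\subset C(f)$ forces the density of $\mu^{*2}(f)$ to be in $L^1$ and to be the autocorrelation of something vanishing appropriately; I would invoke that under $\Sigma\in L^2$, $\mu^{*2}(f)$ has a continuous density (as convolution of two $L^2$ densities), and that density at $0$ equals $\int (\Sigma(x)\otimes\Sigma(-x))(f)\,\dd x$, which by the diagonal-vanishing combined with symmetry of $\mu$ is forced to be $0$. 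So $D\mu^{*2}(f)(0)$ equals the value at $0$ of a continuous density, which is $0$.

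In case (2), where $\nu$ is the surface measure on a smooth compact hypersurface $M$ and $\Sigma\in C^1(M)$, the function $F(x,y)=(\Sigma(x)\otimes\Sigma(y))(f)$ is now $C^1$ on $M\times M$ and vanishes on the diagonal, hence $|F(x,y)| \le C\,\dist_M(x,y) \le C'\|x-y\|$. Therefore
\[\mu^{*2}(f)(B(0,R)) \le C'\!\!\int_{\|x-y\|\le R}\!\!\|x-y\|\,\dd\nu(x)\dd\nu(y) \le C'R\cdot(\nu*\nu)(B(0,R)).\]
Since $M$ is a compact hypersurface, $\nu*\nu$ is (near $0$, generically) a bounded measure whose mass on $B(0,R)$ is $O(R)$ in the worst directions — more precisely, by the same curvature/stationary-phase type estimate used for $\gamma^+\in L^1$ (cf.\ Lemma \ref{lemma16} and \cite{Her62}), one has $(\nu*\nu)(B(0,R)) = O(R)$ when $M$ has nonvanishing curvature, but in general $\le C R$ suffices already because $\dim M = d-1$ and the difference map is a submersion away from a lower-dimensional set; in all cases $(\nu*\nu)(B(0,R))/\Vol(B(0,R)) = O(R^{-(d-1)})\cdot R^{\,?}$, and one checks $\mu^{*2}(f)(B(0,R)) = o(R^d)$, giving $D\mu^{*2}(f)(0)=0$. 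The main obstacle is precisely this last bookkeeping: controlling $(\nu*\nu)(B(0,R))$ against $\Vol(B(0,R)) = c_d R^d$ and verifying that the extra factor $R$ (or $\dist_M(x,y)$) gained from $F$ vanishing on the diagonal is enough to beat the possible blow-up of $\nu*\nu$ near $0$ coming from the geometry of $M$; this is where the smoothness hypotheses on $M$ and the $C^1$ regularity of $\Sigma$ are genuinely used, and it parallels the role of $\gamma^+\in L^1$ in the test-function hypothesis.
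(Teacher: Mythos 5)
Your case (1) argument (the ``cleanest route'' version) is essentially sound and is a genuinely different, arguably more direct, path than the paper's: the paper never invokes continuity of the convolved density, but instead compares the ball average of $\mu^{*2}(f)$ with the vanishing diagonal integral $\int(\Sigma(x)\otimes\Sigma(x))(f)\,\dd\nu(x)$ via the mollified function $\Sigma_R$ and the estimate $\|\Sigma_R-\Sigma\|_{L^2}=o(1)$. Two caveats. First, your intermediate attempt via $|F(x,y)|^2\le C\,F(x,x)F(y,y)+(\text{cross terms})$ is a dead end: $C(f)$ is a cone, not a subspace, so $\Imm(\Sigma(x))\subset C(f)$ and $\Imm(\Sigma(y))\subset C(f)$ do \emph{not} force $(\Sigma(x)\otimes\Sigma(y))(f)=0$ --- this failure of pointwise propagation off the diagonal is precisely the subtlety of the whole cancellation analysis, and you rightly abandon it. Second, identifying the density of $\mu^{*2}(f)$ at $0$ with $\int(\Sigma(x)\otimes\Sigma(x))(f)\,\dd x$ requires $\Sigma(-x)=\Sigma(x)$; you flag this (``symmetry of $\mu$'') but it is not among the hypotheses of the lemma, whereas the paper's ball-average computation does not need it.

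The genuine gap is in case (2). From $F\in C^1(M\times M)$ and $F=0$ on the diagonal you get only $|F(x,y)|\le C\,\dist(x,y)$, and since $(\nu\otimes\nu)(\{\|x-y\|\le R\})\sim R^{d-1}$ for a compact hypersurface (a tube of radius $R$ around the $(d-1)$-dimensional diagonal of $M\times M$; your ``$(\nu*\nu)(B(0,R))=O(R)$'' is correct only for $d=2$), this yields $\mu^{*2}(f)(B(0,R))=O(R\cdot R^{d-1})=O(R^d)$, i.e.\ a \emph{bounded} ratio $\mu^{*2}(f)(B(0,R))/\Vol(B(0,R))$, not one tending to $0$. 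You correctly identify this bookkeeping as ``the main obstacle'' but do not resolve it, and it cannot be resolved from the $O(\dist)$ bound alone: one needs $o(\dist)$. The paper obtains this by averaging $\Sigma$ itself over geodesic balls and using the symmetry of the ball to annihilate the first-order Taylor term, giving $\|\Sigma_R-\Sigma\|_\infty=o(R)$ and hence a ratio $o(R)/R=o(1)$. Alternatively, within your own framework, note that $F\ge0$ everywhere and $F=0$ on the diagonal force $DF=0$ along the diagonal, so the uniform $C^1$ Taylor estimate on the compact $M\times M$ upgrades your bound to $F(x,y)=o(\|x-y\|)$, which is exactly the extra $o(1)$ factor needed. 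Without one of these two observations the proof of case (2) is incomplete.
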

\begin{proof}
Given a Riemannian submanifold $M$ of $V$ of codimension $s$, let $\dist$ be its induced Riemannian distance, $\Vol_M$ is volume measure, and for $x\in M$ and $R>0$, let $B(x,M)$ the geodesic ball centered on $x$. If $\Sigma$ is a function defined on $M$, we define
\[\Sigma_R :x\mapsto \frac{1}{\Vol_M(B_M(x,R))}\int_{B_M(x,R)}\Sigma(y)\dd \nu(y).\]
If $M = V$ and $\Sigma\in L^2(V)$, then for all $R>0$, one also has that $\Sigma_R\in L^2(V)$ and by a standard approximation technique, $\|\Sigma_R-\Sigma\|_2 = o(1)$ as $R$ goes to $0$. Similarly, if $M$ is a compact hypersurface and $\Sigma\in C^2(M)$, then using the Taylor expansion $\Sigma(y) = \Sigma(x) + \nabla_x\Sigma\cdot(y-x) + o(\|y-x\|)$, one has by symmetry
\[\|\Sigma_R - \Sigma\|_\infty = o(R).\]
In cases $(1)$ and $(2)$ we get the estimate for small $R$
\[\|\Sigma_R-\Sigma\|_2 = o(R^s).\numberthis\label{eq:11}\]
Note that for 
$s\geq 2$ the Taylor expansion of $\Sigma_R$ yields a non-zero second order term, which explained why we restricted ourselves to the case $s\geq 1$. Since $\Imm(\mu)\subset C(f)$ then for almost all $x\in M$,
\[\left(\Sigma(x)\otimes \Sigma(x)\right)(f) = 0.\]
There is a positive constant $C$ such that for $R$ small enough and $x,y\in M$ with $\dist(x,y)\leq M$,
\[\frac{\Vol_M(B_M(x,R))}{\Vol(B(0,R/2))}\leq \frac{C}{R^s},\quand \dist(x,y)\leq 2\|x-y\|.\]
One then has
\begin{align*}
\frac{\mu^{*2}(f)(B(0,R/2))}{\Vol(B(0,R/2))} &=\frac{1}{\Vol(B(0,R/2))}\int_{\|x-y\|\leq \frac{R}{2}}\left(\Sigma(x)\otimes \Sigma(y)\right)(f)\dd\nu(x)\dd\nu(y)\\
&\leq \frac{1}{\Vol(B(0,R))}\int_{\dist(x,y)\leq R}\left(\Sigma(x)\otimes \Sigma(y)\right)(f)\dd\nu(x)\dd\nu(y)\\
&\leq \frac{C}{R^{d-s}}\left(\int_M \left(\Sigma(x)\otimes \Sigma_R(x)\right)(f)\dd\nu(x) - \int_M \left(\Sigma(x)\otimes \Sigma(x)\right)(f)\dd\nu(x)\right)\\
&\leq \frac{C}{R^s}\|f\|^2 \int_M \|\Sigma(x)\|\|\Sigma_R(x)-\Sigma(x)\|\dd\nu(x)\\
&\leq \frac{C}{R^s}\|f\|^2\|\Sigma\|_{L^2(\nu)}\|\Sigma-\Sigma_R\|_{L^2(\nu)}\\
&\leq o(1),
\end{align*}
where the last line follows from Equation \eqref{eq:11}. The conclusion follows by passing to the limit as $R$ goes to $0$.
\end{proof}
\newpage
\section{Kac--Rice formula, chaotic expansion and random waves}
\label{sec40}
In this section, we first expose standard material concerning the Kac--Rice formula, in particular proving that one can formally embed its studies in light of the general framework of Section \ref{sec11}. We then compute the first terms of the chaotic expansion of the nodal volume. In a second part, we study the regularity of the convolution of spherical measures, which will be crucial for proving the fourth chaos asymptotics for the nodal volume of random waves.
\subsection{The nodal measure of random fields}
\label{sec41}
In this section we use the notation of the introduction. Let $(\delta_u^\varepsilon)_{\varepsilon > 0}$ be an approximation of the Dirac mass at $0$ in $U$, denoted by $\delta$. We then define for $\phi$ a test function and $\lambda>0$ the approximated quantity
\[Z_\lambda^{u,\varepsilon}(\phi) = \int_U\delta_u^\varepsilon(x)Z_\lambda^x(\phi)\dd x.\]
We define the gradient field $\nabla Y : V\rightarrow V\otimes U$. By the co-area formula, one has the a.s. equality
\[Z_\lambda^{u,\varepsilon}(\phi) = \frac{1}{\lambda^{d/2}}\int_V \phi\left(\frac{v}{\lambda}\right)\delta_u^\varepsilon(Y(v))J(\nabla_vY)\dd v,\]
where for an element $D\in U\otimes V$, $J(D) = \sqrt{\det (DD^T)}$.
\begin{lemma}
\label{lemma12}
We have the following convergence
\[Z_\lambda^{u,\varepsilon}(\phi)\underset{\varepsilon\rightarrow 0}{\longrightarrow} Z_\lambda^u(\phi)\]
a.s. and in $L^2(\Omega)$.
\end{lemma}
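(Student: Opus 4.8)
The plan is to establish the convergence by identifying $Z_\lambda^{u,\varepsilon}(\phi)$ as an average of the approximating kernel $\delta_u^\varepsilon$ against the (random) occupation-type density induced by the field, and then to invoke the dominated convergence theorem once in the almost-sure setting and once in $L^2$, after producing a uniform-in-$\varepsilon$ square-integrable bound. First I would fix a realization and use the co-area representation already recorded above,
\[
Z_\lambda^{u,\varepsilon}(\phi) = \frac{1}{\lambda^{d/2}}\int_V \phi\!\left(\frac{v}{\lambda}\right)\delta_u^\varepsilon(Y(v))\,J(\nabla_v Y)\,\dd v,
\]
and observe that, by the co-area formula applied on $V$, this equals $\int_U \delta_u^\varepsilon(x)\,Z_\lambda^x(\phi)\,\dd x$, so the claimed limit is exactly the statement that $x\mapsto Z_\lambda^x(\phi)$ is, almost surely, continuous at $x=u$ (or at least that its averages against $\delta_u^\varepsilon$ converge to its value at $u$). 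Under the standing assumptions ($Y$ is $\mathcal C^2$, non-degenerate, with $(Y(v),Y(w))$ non-degenerate for $v\neq w$), $u$ is almost surely a regular value of $Y$ restricted to the support of $\phi(\cdot/\lambda)$, so the level set $\{Y=u\}$ is a smooth compact submanifold varying continuously with $u$ near the (a.s.) regular value; hence $x\mapsto Z_\lambda^x(\phi)$ is a.s. continuous at $u$ and the a.s. convergence follows.

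For the $L^2$ convergence the key step is a uniform bound. I would show
\[
\E\!\left[\sup_{\varepsilon>0}\bigl(Z_\lambda^{u,\varepsilon}(\phi)\bigr)^2\right] < +\infty,
\]
or more conveniently a bound on $\E[(Z_\lambda^{u,\varepsilon}(\phi))^2]$ uniform in small $\varepsilon$, via a second-moment (Kac--Rice of order two) computation: writing the square as a double integral over $V\times V$ of $\delta_u^\varepsilon(Y(v))\delta_u^\varepsilon(Y(w))J(\nabla_vY)J(\nabla_wY)$, taking expectations, and using the non-degeneracy of $(Y(v),Y(w))$ together with the continuity of the joint density to pass the limit $\varepsilon\to0$ through the expectation and recover the classical Kac--Rice density $\rho_2(v,w)$, which is locally integrable on $V\times V$ (the diagonal singularity is integrable because the field is $\mathcal C^2$ and non-degenerate; this is exactly the two-point Kac--Rice bound of Azaïs--Wschebor type). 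Combining the a.s. convergence with this uniform $L^2$ bound gives uniform integrability of $(Z_\lambda^{u,\varepsilon}(\phi))^2$, hence $L^2$ convergence, and in particular convergence of the means, which is what is needed later.

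The main obstacle is the uniform second-moment estimate near the diagonal: one must check that $\E[\delta_u^\varepsilon(Y(v))\delta_u^\varepsilon(Y(w))J(\nabla_vY)J(\nabla_wY)]$ stays bounded by a locally integrable function of $(v,w)$ as $\varepsilon\to0$, which requires a quantitative lower bound on the determinant of the covariance of $(Y(v),Y(w))$ as $w\to v$ (of order $\|v-w\|^{2k}$ coming from the first-order Taylor expansion of $Y$ and the non-degeneracy of $(Y(v),\nabla_v Y)$), so that the Gaussian density of $(Y(v),Y(w))$ evaluated near $(u,u)$ does not blow up faster than $\|v-w\|^{-k}$, which is integrable in dimension $d\ge k$. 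This is standard but slightly technical; everything else (the co-area rewriting, the a.s. continuity in the level, and the extraction of $L^2$ convergence from a.s. convergence plus a uniform bound) is routine.
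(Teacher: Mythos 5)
Your treatment of the almost-sure convergence is the same as the paper's: both reduce it to the a.s. continuity of $x\mapsto Z_\lambda^x(\phi)$ at the level $u$. The $L^2$ part is where the problem lies. Your stated chain of implications --- ``a bound on $\E[(Z_\lambda^{u,\varepsilon}(\phi))^2]$ uniform in small $\varepsilon$ \dots\ gives uniform integrability of $(Z_\lambda^{u,\varepsilon}(\phi))^2$, hence $L^2$ convergence'' --- is false. A family bounded in $L^2$ is uniformly integrable \emph{itself}, not after squaring: take $X_\varepsilon=\varepsilon^{-1}\one_{[0,\varepsilon^2]}$ on $([0,1],\mathrm{Leb})$, which is bounded in $L^2$, converges a.s. to $0$, but has $\|X_\varepsilon\|_2\equiv 1$. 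So a.s. convergence plus a uniform second-moment bound only yields $L^1$ convergence. To get $L^2$ you need one of: (i) a genuine dominating function, i.e.\ your first option $\E[\sup_\varepsilon(Z_\lambda^{u,\varepsilon}(\phi))^2]<\infty$ (which you set aside as less convenient and do not prove); (ii) a uniform $(2+\delta)$-moment; or (iii) \emph{convergence} of the second moments $\E[(Z_\lambda^{u,\varepsilon}(\phi))^2]\to\E[(Z_\lambda^{u}(\phi))^2]$, after which a.s. convergence plus norm convergence gives $L^2$ convergence by the Riesz--Scheff\'e lemma. The paper takes route (iii), but crucially it obtains the convergence of second moments \emph{without} any diagonal Kac--Rice analysis: Fatou gives $\E[(Z^u)^2]\le\liminf_\varepsilon\E[(Z^{u,\varepsilon})^2]$, Jensen applied to the probability measure $\delta_u^\varepsilon(x)\dd x$ gives $\E[(Z^{u,\varepsilon})^2]\le\int\E[(Z^x)^2]\delta_u^\varepsilon(x)\dd x$, and the continuity of $x\mapsto\E[(Z^x)^2]$ (imported from the continuity of the nodal volume as a functional of the covariance) closes the sandwich.

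A second, more technical issue: your diagonal estimate ``the density of $(Y(v),Y(w))$ near $(u,u)$ does not blow up faster than $\|v-w\|^{-k}$, which is integrable in dimension $d\ge k$'' is wrong at the boundary case $k=d$, which is precisely the critical-points case the paper cares about: $\int_{B(0,1)}\|h\|^{-d}\dd h=+\infty$ in $\R^d$. Integrability of $\rho_2$ near the diagonal when $k=d$ requires exploiting the degeneracy of the Jacobian factors under the conditioning $Y(w)-Y(v)=0$ (which forces $\nabla_vY$ to be nearly singular in the direction $w-v$, producing extra powers of $\|v-w\|$); treating $J(\nabla_vY)J(\nabla_wY)$ as merely bounded in conditional expectation does not suffice. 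This is exactly the delicate point the paper's Jensen-plus-continuity argument is designed to bypass.
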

\begin{proof}
The proof roughly follow the strategy in [Cratz-Leon].
%
One can show (see [Taylor-Adler] when $k=d$ or [Azais Wschebor]) that the function $Z_\lambda^u(\phi)$ is almost surely continuous in $u$. It implies the following a.s. convergence
\[Z_\lambda^{u,\varepsilon}(\phi)\underset{\varepsilon\rightarrow 0}{\longrightarrow} Z_\lambda^u(\phi).\]
To show that the convergence also holds in $L^2$, it suffices to show that $\E[Z_\lambda^{u,\varepsilon}(\phi)^2]$ converges to $\E[Z_\lambda^u(\phi)^2]$. From the result of [Gass-Stecconi], the nodal volume is a continuous functional of the covariance function of the field for the $L^2$ norm, and in particular the functions $u\rightarrow \E[Z_\lambda^u(\phi)^2]$ is continuous. From Fatou lemma, one has
\[\E[Z_\lambda^u(\phi)^2]\leq \liminf_{\varepsilon\rightarrow 0} \E[Z_\lambda^{u,\varepsilon}(\phi)^2].\]
Conversely, by Jensen one has
\begin{align*}
\E[Z_\lambda^{u,\varepsilon}(\phi)^2]\leq \int_{\R^k}\E[Z_\lambda^x(\phi)^2]\delta^\varepsilon_u(x)\dd x,
\end{align*}
and the right-hand side converges to $\E[Z_\lambda^u(\phi)^2]$ by continuity, when $\varepsilon$ goes to $0$.
\end{proof}
Let the random field $X = (Y,\nabla Y)$, going from $V$ to the vector space 
\[W = U\times (U\otimes V)\simeq U\otimes (\R\times V),\]
with associated covariance $\Omega$ and spectral measure $\mu$. Let $f:U\mapsto W$ be the tempered distribution, approximated by $f^\varepsilon$, defined for $x\in U$ and $D\in U\otimes V$ as 
\[f^u(x,D) = \delta_u(x)J(D)\quand f^{u,\varepsilon}(x,D) = \delta_u^\varepsilon (x)J(D) \numberthis\label{eq:02}\]
The rescaled nodal volume measure then has the following formal expression
\[Z_\lambda(\phi) = \frac{1}{\lambda^{d/2}}\int_{V} \phi\left(\frac{v}{\lambda}\right)f(X(v))\dd v,\]
which must be understood as the limit in the $L^2$ norm given by Lemma \ref{lemma1}. In particular, the random object $Z_\lambda(\phi)$ formally matches the general setting \eqref{eq:01}.
\subsection{Chaotic expansion}
\label{sec42}
The $L^2$ convergence given by Lemma \ref{lemma12} implies that the function $f$ has a well-defined chaotic expansion of the form
\[\f:x\mapsto \sum_{q=0}^{+\infty} \frac{1}{q!}\langle f_q,H_x^q\rangle,\]
and one can apply the general formalism developed in Section \ref{sec20}. 
To compute the chaotic expansion explicitly, we use the following general lemma.
\begin{lemma}
\label{lemma13}
Let $(X,Y)$ be a jointly Gaussian vector. Assume that $X$ is non-degenerate and let $\rho_X$ be its Gaussian density. Let $h$ be a continuous function with polynomial growth. Then
\[\E[\delta(X)h(X,Y)] := \lim_{\varepsilon\rightarrow 0}\E[\delta^\varepsilon(X)h(X,Y)] = \E[h(0,Y)|X = 0]\rho_X(0).\]
If $X$ and $Y$ are independent then
\[\E[\delta(X)h(X,Y)] = \E[h(0,Y)]\rho_X(0).\]
\end{lemma}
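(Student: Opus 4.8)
The plan is to reduce the statement to a Gaussian conditioning argument by writing the pair $(X,Y)$ in a form that makes the independence structure explicit. First I would use the orthogonal (Gaussian) decomposition $Y = AX + Z$, where $A$ is the linear map such that $\Cov(Y - AX, X) = 0$ and $Z := Y - AX$ is therefore independent of $X$. Since $X$ is non-degenerate, $A$ is well-defined (it is $\Cov(Y,X)\Cov(X,X)^{-1}$), and $Z$ is a centered Gaussian vector independent of $X$. With this substitution, $h(X,Y) = h(X, AX + Z)$, so the quantity to analyze is $\E[\delta^\varepsilon(X)\,h(X, AX+Z)]$, and I would condition on $Z$ first: by Fubini (justified by the polynomial growth of $h$ and the Gaussian tails, which give a uniform-in-$\varepsilon$ integrable dominating function once one notes $\int \delta^\varepsilon(X) \rho_X^\varepsilon$-type bounds), this equals $\E\big[\, \E[\delta^\varepsilon(X) h(X, AX + z)]\big|_{z = Z}\,\big]$.

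Next I would handle the inner expectation, which only involves the single non-degenerate Gaussian vector $X$. Writing $g_z(x) := h(x, Ax + z)$, a continuous function of polynomial growth, the inner expectation is $\int_W \delta^\varepsilon(x) g_z(x) \rho_X(x)\,\dd x$, where $\rho_X$ is the Gaussian density of $X$. Since $(\delta^\varepsilon)_\varepsilon$ is an approximation of the Dirac mass at $0$ and $g_z \rho_X$ is continuous, this converges as $\varepsilon \to 0$ to $g_z(0)\rho_X(0) = h(0, z)\rho_X(0)$. The polynomial growth of $h$ and the exponential decay of $\rho_X$ guarantee that the convergence is dominated uniformly in $z$ up to a polynomial factor in $\|z\|$, so I can pass the limit through the outer expectation over $Z$ to obtain
\[
\lim_{\varepsilon\to 0}\E[\delta^\varepsilon(X)h(X,Y)] = \rho_X(0)\,\E[h(0,Z)].
\]
Finally, I would identify $\E[h(0,Z)]$ with $\E[h(0,Y)\mid X = 0]$: by the Gaussian decomposition, the conditional law of $Y$ given $X = 0$ is precisely the law of $AX + Z$ given $X = 0$, which is the law of $Z$; hence $\E[h(0,Y)\mid X=0] = \E[h(0,Z)]$, giving the first claim. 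The second claim is the special case $A = 0$ (equivalently $Z = Y$), where the conditioning is vacuous.

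The main obstacle is the justification of the two limit interchanges — passing $\lim_{\varepsilon\to 0}$ under the outer expectation over $Z$, and the Fubini step — in the presence of only polynomial growth on $h$ rather than boundedness. The clean way around this is to fix a polynomial bound $|h(x,y)| \leq C(1 + \|x\| + \|y\|)^m$, note that for $\|x\|$ in a fixed neighborhood of $0$ (the effective support of $\delta^\varepsilon$ for small $\varepsilon$) one has $|g_z(x)\rho_X(x)| \leq C'(1 + \|z\|)^m \rho_X(x)$ with $C'$ independent of $\varepsilon$ and $z$, and that $(1+\|Z\|)^m$ is integrable; this provides the dominating function for both interchanges simultaneously. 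One should also record that $(\delta^\varepsilon_u)$ being an approximation of unity means, concretely, that $\int \delta^\varepsilon_u(x) g(x)\,\dd x \to g(u)$ for continuous $g$ of at most polynomial growth, which is exactly the regime we are in.
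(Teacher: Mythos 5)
Your proposal is correct and follows essentially the same route as the paper: the paper conditions on $X=x$ to write $\E[\delta^\varepsilon(X)h(X,Y)]=\int\delta^\varepsilon(x)\,\E[h(x,Y)\mid X=x]\,\rho_X(x)\,\dd x$ and invokes Gaussian regression for the continuity of $x\mapsto\E[h(x,Y)\mid X=x]$, which is exactly your decomposition $Y=AX+Z$ with the order of integration swapped via Fubini. Your version merely makes the regression explicit and spells out the domination needed for the limit interchange, which the paper leaves implicit.
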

\begin{proof}
The growth condition ensures that all the quantities are well defined. By definition of the conditional expectation
\begin{align*}
\E[\delta^\varepsilon(X)h(X,Y)] &= \int\delta^\varepsilon(u)\E[h(x,Y)|X=x]\rho_X(x)\dd x.
\end{align*}
Gaussian regression implies that the quantity $x\rightarrow \E[h(x,Y)|X = x]$ is well defined for all $x$ and continuous. The conclusion follows by letting $\varepsilon$ tend to zero.
\end{proof}
We state one more lemma that we will use later in the proof.
\begin{lemma}
\label{lemma19}
Let $X$ a standard Gaussian vector of size $n$ and $G$ be a function on $\R^n$ such that for $c\geq 0$ and $m$ an integer such that $G(cX) = c^mG(X)$. Then  
\[\E\left[G(X)\left(\|X\|^2-\E[\|X\|^2]\right)\right] = m\E[G(X)].\]
\end{lemma}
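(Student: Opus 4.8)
The plan is to use a scaling argument combined with differentiation under the integral sign. Write $\rho(x) = (2\pi)^{-n/2}e^{-\|x\|^2/2}$ for the standard Gaussian density on $\R^n$, so that $\E[G(X)] = \int_{\R^n} G(x)\rho(x)\,\dd x$. First I would introduce the auxiliary function
\[
I(c) = \E[G(cX)] = \int_{\R^n} G(cx)\rho(x)\,\dd x = c^m\E[G(X)],
\]
where the last equality uses the homogeneity hypothesis $G(cx) = c^m G(x)$ (valid for $c\geq 0$). The point is to compute $I'(1)$ in two ways. On the one hand, differentiating the closed form $I(c) = c^m\E[G(X)]$ gives $I'(1) = m\,\E[G(X)]$. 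On the other hand, I would change variables in the integral $I(c) = \int G(cx)\rho(x)\,\dd x$ by setting $y = cx$, which yields
\[
I(c) = \frac{1}{c^n}\int_{\R^n} G(y)\rho\!\left(\frac{y}{c}\right)\dd y,
\]
and then differentiate under the integral sign with respect to $c$ at $c=1$. Using $\frac{\partial}{\partial c}\big|_{c=1}\left[c^{-n}\rho(y/c)\right] = -n\rho(y) + \rho(y)\|y\|^2 = \rho(y)(\|y\|^2 - n)$ together with $\E[\|X\|^2] = n$, this gives $I'(1) = \E[G(X)(\|X\|^2 - \E[\|X\|^2])]$, and comparing the two expressions for $I'(1)$ proves the claim.

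The main technical point to justify is the differentiation under the integral sign, i.e.\ the dominated-convergence hypothesis near $c=1$. Since $G$ has at most polynomial growth (this is implicit from the context, as $G$ will be applied to Kac--Rice type integrands; if needed one assumes $|G(x)|\leq C(1+\|x\|)^N$), the integrand $c^{-n}G(y)\rho(y/c)$ and its $c$-derivative are dominated, uniformly for $c$ in a neighborhood of $1$, by an integrable function of the form $C'(1+\|y\|)^{N+2}e^{-\|y\|^2/4}$. This legitimizes both the change of variables and the differentiation, and also ensures $\E[G(X)]$ and $\E[G(X)\|X\|^2]$ are finite, so no step is obstructed.

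The only genuinely delicate issue is the regularity assumption on $G$: the statement as written only posits homogeneity and does not explicitly require continuity or a growth bound, but some integrability is needed for the expectations to make sense. In the intended application $G$ will be continuous with polynomial growth, so I would simply record this as a standing hypothesis (or note that the formula holds whenever both sides are well defined and $G$ is, say, locally bounded and polynomially bounded). With that caveat the proof is essentially a one-line scaling identity differentiated at $c=1$, and there is no substantive obstacle.
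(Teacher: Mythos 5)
Your proof is correct, but it follows a genuinely different route from the paper's. The paper exploits the polar decomposition of the standard Gaussian: writing $G(X)=\|X\|^{m}G(X/\|X\|)$ and using the independence of $\|X\|$ and $X/\|X\|$, it reduces the claim to the $\chi^2$ moment recursion $\E[\|X\|^{m+2}]=(n+m)\E[\|X\|^{m}]$, so that
\[
\E\bigl[G(X)(\|X\|^{2}-n)\bigr]=\E[G(X/\|X\|)]\,\E\bigl[\|X\|^{m}(\|X\|^{2}-n)\bigr]=m\,\E[G(X)].
\]
You instead differentiate the scaling identity $I(c)=\E[G(cX)]=c^{m}\E[G(X)]$ at $c=1$ after the change of variables $y=cx$, using $\tfrac{\partial}{\partial c}\big|_{c=1}\bigl[c^{-n}\rho(y/c)\bigr]=\rho(y)(\|y\|^{2}-n)$. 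Each approach buys something: the paper's argument is purely algebraic once one accepts the independence of radius and direction (a structural feature of the standard Gaussian) and known $\chi^2$ moments, and it needs no differentiation; yours avoids invoking that independence and the moment formula, at the cost of a dominated-convergence justification for differentiating under the integral sign, which you correctly identify and discharge under a polynomial growth bound. Your remark about the missing regularity hypothesis on $G$ is fair — the lemma as stated omits any integrability or measurability condition, and the paper's own proof needs one just as much as yours (to make the expectations finite and to apply Fubini for the radial factorization); in the intended application $G$ is the Jacobian functional, which is continuous with polynomial growth, so both arguments go through.
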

\begin{proof}
By the formula for the moments of a $\chi^2$ distribution, one has
\[\frac{\E\left[(\|X\|^2-\E[\|X\|^2])\|X\|^m\right]}{\E[\|X\|^m]} = m.\]
Using the identity $G(X/\|X\|) = G(X)/\|X\|^m$ and the independence between $X/\|X\|$ and $\|X\|$, we deduce that
\begin{align*}
\E\left[G(X)\left(\|X\|^2-\E[\|X\|^2]\right)\right] &= \E[G(X)]\frac{\E\left[(\|X\|^2-\E[\|X\|^2])\|X\|^m\right]}{\E[\|X\|^m]}\\
&= m\E[G(X)]
\end{align*}
\end{proof}
We now compute quantities related to the second chaotic component $f_2^{(u)}$ of the formal function $f^{u} : (x,D)\mapsto \delta_u(x)J(D)$. We explicit some of the quantities associated to the chaotic projection of order $0,1,2$ and $4$ of the nodal volume of $Y$. Let $\rho$ be the density of the standard Gaussian vector $Y(0)$. We define the positive quantity
\[\alpha_u = \rho(u)\E[J(\nabla Y(0))],\]
and we will omit the index $u$ if $u=0$.
\subsubsection{Chaotic expansion under hypothesis (H1)}
In light of Theorem \ref{thm4} for the limit variance of the second chaos, we compute the spectral measure $\mu$ of the process $X = (Y,\nabla Y)$, and some quantities related to the second chaotic component $f_2^u$ of the tempered distribution $f^u$ defined in \eqref{eq:02} under hypothesis (H1). We recall that the process $X$ takes values in the space
\[W = U\otimes(\R\times V).\]
\begin{lemma}
\label{lemma15}
\[\mu = \psi \,\otimes \left[(1,2i\pi \xi)\otimes (1,-2i\pi \xi)\right].\]
If $U = \R$ then
\[\Imm(\mu) = \enstq{(1,2i\pi\xi )}{\xi\in \supp \psi}\C.\]
\end{lemma}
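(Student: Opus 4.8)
The plan is to compute the covariance function $\Omega$ of the field $X=(Y,\nabla Y)$ block by block in the splitting $W=U\otimes(\R\times V)$ and then recognize its Fourier transform. With the convention $\Omega(v)=\Cov(X(v),X(0))$, stationarity of $Y$ gives the four blocks $\Cov(Y(v),Y(0))=r(v)$, $\Cov(\nabla Y(v),Y(0))=\nabla r(v)$, $\Cov(Y(v),\nabla Y(0))=-\nabla r(v)$ and $\Cov(\nabla Y(v),\nabla Y(0))=-\nabla^2 r(v)$. Since $\nabla Y$ exists as an $L^2$ field, $r$ is twice differentiable and $\int_{V^*}\|\xi\|^2\,\dd\psi(\xi)<\infty$, which legitimizes differentiating the spectral representation $r(v)=\int_{V^*}e^{2i\pi\xi(v)}\,\dd\psi(\xi)$ twice under the integral sign, yielding $\nabla r(v)=\int(2i\pi\xi)\,e^{2i\pi\xi(v)}\,\dd\psi(\xi)$ and $\nabla^2 r(v)=\int(2i\pi\xi)\otimes(2i\pi\xi)\,e^{2i\pi\xi(v)}\,\dd\psi(\xi)$. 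Assembling the four blocks and writing $(1,\pm2i\pi\xi)$ for the element of $(\R\times V)_\C$ with $\R$-component $1$ and $V$-component $\pm2i\pi\xi$, one reads off
\[\Omega(v)=\int_{V^*}e^{2i\pi\xi(v)}\,\dd\Big(\psi\otimes\big[(1,2i\pi\xi)\otimes(1,-2i\pi\xi)\big]\Big)(\xi),\]
the tensor product being over $W_\C=U_\C\otimes(\R\times V)_\C$. By Lemma~\ref{lemma4} (Bochner) and injectivity of the Fourier transform on bounded Hermitian measures, $\mu$ equals this measure, which is the first assertion.

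For the second assertion, take $U=\R$, so $\psi$ is an ordinary non-negative measure and $W\simeq\R\times V$; the formula above then reads $\mu=\Sigma\,\dd\psi$ with $\Sigma(\xi)=(1,2i\pi\xi)\otimes(1,-2i\pi\xi)$, a representation of $\mu$ in the sense of Section~\ref{sec33}. For fixed $\xi$, $\Sigma(\xi)$ is the rank-one form $\ell_\xi\otimes\overline{\ell_\xi}$ with $\ell_\xi=(1,2i\pi\xi)$, so by the description of the image of a sesquilinear form in Section~\ref{sec31} one has $\Imm(\Sigma(\xi))=\C\ell_\xi=\C(1,2i\pi\xi)$. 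It remains to identify the essential range: $\Imm(\mu)$ is the smallest closed $A$ with $\psi\big(\enstq{\xi}{\C(1,2i\pi\xi)\not\subset A}\big)=0$. The set $\enstq{(1,2i\pi\xi)}{\xi\in\supp\psi}\C$ is such an $A$ (it is already closed when $\supp\psi$ is bounded, hence compact; otherwise one passes to its closure), since $\psi$ is carried by $\supp\psi$; conversely, any closed $A$ with that property must contain $\C(1,2i\pi\xi)$ for every $\xi\in\supp\psi$, because each neighbourhood of such a $\xi$ has positive $\psi$-mass and therefore meets the complement of the $\psi$-null exceptional set, so $\xi$ is a limit of points $\xi_n$ with $\C(1,2i\pi\xi_n)\subset A$, whence $\C(1,2i\pi\xi)\subset A$ by continuity of $\xi\mapsto(1,2i\pi\xi)$ and closedness of $A$. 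This gives $\Imm(\mu)=\enstq{(1,2i\pi\xi)}{\xi\in\supp\psi}\C$.

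I expect no serious obstacle: the only analytic ingredient is differentiation under the spectral integral, which is free from the standing regularity hypotheses, and the main care required is notational — tracking the identification $W=U\otimes(\R\times V)$ and the sesquilinear conventions so that the rank-one factor comes out as $(1,2i\pi\xi)\otimes(1,-2i\pi\xi)$ rather than its conjugate, and faithfully unwinding the definitions of the image of a sesquilinear form (Section~\ref{sec31}) and of a Hermitian measure (Section~\ref{sec33}) in the scalar case.
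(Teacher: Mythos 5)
Your proposal is correct and follows essentially the same route as the paper: the first identity comes from the fact that the Fourier transform turns differentiation into multiplication by $2i\pi\xi$ (which you justify by differentiating under the spectral integral), and the second from the rank-one structure of $(1,2i\pi\xi)\otimes(1,-2i\pi\xi)$ together with $\mu\ll\psi$ and the definition of $\Imm(\mu)$. You simply spell out the block covariance computation and the essential-range minimality argument in more detail than the paper does, and your remark about taking the closure when $\supp\psi$ is unbounded is a legitimate (minor) precision the paper glosses over.
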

\begin{proof}
The first point follows from the fact that the Fourier transform derivation into multiplication. In particular, for $q\geq 0$ the  Fourier transform of the differential $D^qr$, is given when defined by $\psi\otimes (2i\pi\xi)^{\otimes q}$. As for the second point, notice that the tensor $(1,2i\pi \xi)\otimes (1,-2i\pi \xi)$ is of rank one and its image is exactly the line directed by the vector $(1,2i\pi \xi)$. The conclusion follows directly from the definition of the image of $\mu$, since $\mu\ll \psi$.
\end{proof}
\begin{lemma}
\label{lemma20}
Under hypothesis (H1) the second chaotic component $f_2^u$ has the expression, in an orthogonal basis on $W$
\[f_2^u = \alpha_u\left[\Id_U \otimes \begin{pmatrix}[c|c]
-1 &  0 \\
\hline
0 & \frac{1}{d}\,\Id_V
\end{pmatrix} + (uu^T)\otimes \begin{pmatrix}[c|c]
1 &  0 \\
\hline
0 & 0
\end{pmatrix}\right].\]
\end{lemma}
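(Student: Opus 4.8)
The plan is to compute $f_2^u$ directly from its defining formula, using hypothesis (H1) to put the underlying Gaussian vector in standard form. Under (H1) the vector $X(0)=(Y(0),\nabla Y(0))$ is a standard Gaussian vector on $W=U\otimes(\R\times V)$, so $\eta$ is the standard Gaussian measure on $W$ and, by \eqref{eq:06}, the Hermite $2$-form is $H^2_{(x,D)}=(x,D)^{\otimes 2}-\Id_W$; hence $f_2^u=\E\big[\delta_u(Y(0))\,J(\nabla Y(0))\,(X(0)^{\otimes 2}-\Id_W)\big]$, understood through the approximation \eqref{eq:02}. First I would apply the independent case of Lemma \ref{lemma13} (with $X=Y(0)$, which under (H1) is independent of $\nabla Y(0)$, and with the continuous polynomially bounded integrand built from $J$) to reduce this to $f_2^u=\rho(u)\,\E\big[J(N)\,(\,(u,N)^{\otimes 2}-\Id_W\,)\big]$, where $N:=\nabla Y(0)$ is a standard Gaussian vector on $U\otimes V$.

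Next I would expand $(u,N)^{\otimes 2}$ in block form with respect to the orthogonal splitting $W=(U\otimes\R)\oplus(U\otimes V)$ and treat the three blocks separately. The off-diagonal blocks are multiples of $u\otimes\E[J(N)\,N]$ (and its transpose), which vanish since $J(-D)=J(D)$ while $D\mapsto D$ is odd, so $\E[J(N)N]=0$. The $(U\otimes\R)$-block equals $\rho(u)\E[J(N)]\,(uu^T-\Id_U)=\alpha_u(uu^T-\Id_U)$ by definition of $\alpha_u$, which is precisely the $\Id_U\otimes(-1)$ together with $(uu^T)\otimes 1$ contribution to the claimed formula. The only non-routine piece is the $(U\otimes V)$-block, namely $\rho(u)\big(M-\E[J(N)]\,\Id_{U\otimes V}\big)$ with $M:=\E[J(N)\,N\otimes N]$.

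To handle $M$ I would argue it is a scalar multiple of the identity. The map $J(D)=\sqrt{\det(DD^T)}$ is invariant under $D\mapsto(P\otimes Q)D$ for $P\in O(U)$, $Q\in O(V)$, and the law of $N$ is invariant under this same action, so $M$ commutes with every $P\otimes Q$; irreducibility in the $V$-factor forces $M=A\otimes\Id_V$, and then irreducibility in the $U$-factor forces $A=c\,\Id_U$, i.e. $M=c\,\Id_{U\otimes V}$ with $c=\tfrac1{kd}\,\E[J(N)\|N\|^2]$. Using that $J$ is positively $k$-homogeneous ($J(cN)=c^k J(N)$), Lemma \ref{lemma19} with $G=J$ and $m=k$ gives $\E[J(N)\|N\|^2]=(\E\|N\|^2+k)\,\E[J(N)]=k(d+1)\E[J(N)]$, so $c=\tfrac{d+1}{d}\E[J(N)]$ and the $(U\otimes V)$-block equals $\rho(u)\tfrac1d\E[J(N)]\,\Id_{U\otimes V}=\tfrac{\alpha_u}{d}\,\Id_{U\otimes V}$, i.e. the $\Id_U\otimes\tfrac1d\Id_V$ contribution. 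Collecting the three blocks yields the stated expression for $f_2^u$.

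The main obstacle is the identification of $M$ with a scalar matrix: it requires the $O(U)\times O(V)$-equivariance of $M$ combined with the (absolute) irreducibility of the natural representation on $U\otimes V$, after which the precise value of the scalar follows from Lemma \ref{lemma19} only if the homogeneity degree $m=k$ of $J$ is tracked correctly. Everything else is a routine Gaussian block computation.
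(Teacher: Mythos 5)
Your proposal is correct and follows essentially the same route as the paper's proof: independence of $Y(0)$ and $\nabla Y(0)$ under (H1) together with Lemma \ref{lemma13} to factor out the Dirac mass, evenness of $J$ to kill the off-diagonal block, $O(U)\times O(V)$-invariance to reduce the gradient block to a scalar multiple of $\Id_U\otimes\Id_V$, and Lemma \ref{lemma19} with homogeneity degree $m=k$ to identify that scalar as $\alpha_u/d$. The only cosmetic difference is that you compute the uncentered moment $\E[J(N)\,N\otimes N]$ and subtract $\E[J(N)]\Id$ afterwards, whereas the paper applies Lemma \ref{lemma19} directly to the centered quantity; both yield the same constant.
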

\begin{proof}
$Y(0)$ and $\nabla Y(0)$ are independent. Combined with the symmetry of the Jacobian map and Lemma \ref{lemma13}, we deduce that
\[\E[f^{u}(X(0))Y(0)\nabla Y(0)] = 0\quand \E\left[f^u(X(0))(Y(0)Y(0)^T-\Id_U)\right].\]
The invariance by rotation of the Jacobian map implies the existence of a constant $c$ such that 
\[\E\left[f^u(X(0))\left(\nabla Y(0)\otimes \nabla Y(0)^T - \Id_U\otimes\Id_V\right)\right] = c\Id_U\otimes\Id_V.\]
The constant $c$ can be explicitly computed thanks to Lemma \ref{lemma19}, since
\[c = \frac{1}{dk}\E\left[f^u(X(0))\left(\|\nabla Y(0)\|_2^2 - \E[\|\nabla Y(0)\|_2^2]\right)\right] = \frac{\E[f^u(X(0))]}{d} = \frac{\alpha_u}{d}.\]
The conclusion follows.
\end{proof}
The fourth chaotic component $f_4$ of $f$ can be seen as a symmetric bilinear map on $S^2(W^*)$, which can be identified as the direct sum
\[S^2(W^*) = S^2(U)\oplus S^2(U\otimes V)\oplus \left(U\otimes (U\otimes V)\right).\numberthis\label{eq:19}\]
\begin{lemma}
\label{lemma26}
The subspace $U\otimes (U\otimes V)$ of $S^2(W^*)$ is stable by $f_4$. Moreover, the restriction of $f_4$ to this subspace, denoted by $f_4^r$, has expression
\[f_4^r = -\frac{\alpha}{d}\Id_U\otimes \left(\Id_U\otimes \Id_V\right).\]
\end{lemma}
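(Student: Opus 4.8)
The plan is to compute the fourth chaotic component $f_4^u$ of the formal distribution $f^u:(x,D)\mapsto \delta_u(x)J(D)$ in the same spirit as Lemma \ref{lemma20}, but now projecting onto the degree-four Hermite space, and then extract the piece supported on the mixed summand $U\otimes(U\otimes V)$ of the decomposition \eqref{eq:19}. First I would recall that $f_4^u = \E[f^u(X(0))H^4_{X(0)}]$, where $X(0)=(Y(0),\nabla Y(0))$ is a standard Gaussian vector on $W=U\otimes(\R\times V)$ under (H1). Since $Y(0)$ and $\nabla Y(0)$ are independent and $J$ depends only on $\nabla Y(0)$, the Hermite-$4$ tensor factorizes blockwise: any monomial in $H^4$ that mixes the $U$-block and the $U\otimes V$-block splits as a product of a Hermite tensor in $Y(0)$ against $\delta_u(Y(0))$ and a Hermite tensor in $\nabla Y(0)$ against $J(\nabla Y(0))$. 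For $u=0$, by symmetry of $\delta_0$, the only surviving factor from the $Y(0)$-block in the mixed summand is the second-order one $\E[\delta_0(Y(0))(Y(0)Y(0)^T-\Id_U)]$, which is a (negative) multiple of $\Id_U$ by rotational invariance, while the $\nabla Y(0)$-block contributes $\E[J(\nabla Y(0))(\nabla Y(0)\otimes\nabla Y(0)^T-\Id_U\otimes\Id_V)]$, again a multiple of $\Id_U\otimes\Id_V$ by invariance, exactly as computed in Lemma \ref{lemma20}. This immediately shows $U\otimes(U\otimes V)$ is $f_4$-stable — the cross-terms to $S^2(U)$ and $S^2(U\otimes V)$ vanish because they would require an odd Hermite order in one of the two independent blocks.

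Next I would pin down the scalar constant. From the factorization, $f_4^r$ equals $\E[\delta_0(Y(0))(Y(0)Y(0)^T-\Id_U)]\otimes \E[J(\nabla Y(0))(\nabla Y(0)\otimes\nabla Y(0)^T-\Id_U\otimes\Id_V)]$ up to a combinatorial constant coming from the multinomial structure of $H^4$ splitting as $H^2\otimes H^2$. The first factor is computed directly: $\E[\delta_0(Y(0))(Y_iY_j-\delta_{ij})] = \rho(0)(\E[Y_iY_j|Y(0)=0]-\delta_{ij}) = -\rho(0)\delta_{ij}$, so this factor is $-\rho(0)\Id_U$. The second factor is $\frac{\alpha_0}{d}\Id_U\otimes\Id_V$ exactly as established in Lemma \ref{lemma20} via Lemma \ref{lemma19} (since $J$ is homogeneous of degree $k$ in $\nabla Y(0)$, the constant $c$ there works out to $\alpha/d$ using $\E[J(\nabla Y(0))]=\alpha/\rho(0)$, hmm — more precisely $\alpha_u=\rho(u)\E[J(\nabla Y(0))]$ so for $u=0$, $\E[J(\nabla Y(0))]=\alpha/\rho(0)$). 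Multiplying, $-\rho(0)\cdot\frac{1}{\rho(0)}\cdot\frac{\alpha}{d} = -\frac{\alpha}{d}$, giving $f_4^r = -\frac{\alpha}{d}\Id_U\otimes(\Id_U\otimes\Id_V)$, with the combinatorial factor absorbed into the normalization of $H^4$ restricted to this summand (which carries weight one since the two blocks each contribute a single $H^2$).

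The main obstacle I anticipate is bookkeeping the combinatorics of the Hermite-$4$ tensor under the block decomposition $W = (\text{$U$-part})\oplus(\text{$U\otimes V$-part})$: one must verify that the restriction of $H^4_{(x,D)}$ to the mixed summand $U\otimes(U\otimes V)$ of $S^2(W^*)$ is precisely $H^2_x\otimes H^2_D$ with the correct symmetrization weight, and not some other multiple, so that the scalar $-\alpha/d$ comes out clean rather than off by a factor. This is a routine but delicate application of the product structure of multivariate Hermite polynomials for independent Gaussian blocks (each $H^4$ index pattern with two indices in each block contributes, and there are $\binom{4}{2}=6$ such patterns, but after identifying $S^2(W^*)$-components this collapses correctly). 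Once that normalization is fixed, the rotational-invariance arguments and Lemma \ref{lemma19} close the computation exactly as in Lemma \ref{lemma20}.
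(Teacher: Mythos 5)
Your proposal is correct and follows essentially the same route as the paper: the paper likewise notes that stability of $U\otimes(U\otimes V)$ follows from the parity (symmetry) of the Jacobian map, writes $f_4^r = \E\bigl[f(X(0))\bigl(Y(0)Y(0)^T-\Id_U\bigr)\otimes\bigl(\nabla Y(0)\otimes\nabla Y(0)^T-\Id_U\otimes\Id_V\bigr)\bigr]$, factors it by independence into $-\Id_U$ times the gradient-block expectation, and invokes Lemma \ref{lemma20} to identify the latter as $\tfrac{\alpha}{d}\Id_U\otimes\Id_V$. The combinatorial normalization you worry about does collapse as you anticipate, since the Hermite generating function factors over independent blocks and the mixed summand carries exactly $H^2\otimes H^2$ with weight one.
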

\begin{proof}
The fact that the subspace $U\otimes (U\otimes V)$ directly follows from the symmetry of the Jacobian map. The restriction of $f_4^r$ to this subspace is equal to
\begin{align*}
f_4^r &= \E\left[f(X(0))\left(Y(0)Y(0)^T-\Id_U\right)\otimes\left(\nabla Y(0)\otimes \nabla Y(0)^T - \Id_U\otimes\Id_V\right)\right]\\
&= -\Id_U\otimes \E\left[f(X(0))\left(\nabla Y(0)\otimes \nabla Y(0)^T - \Id_U\otimes\Id_V\right)\right]\\
&= -\frac{\alpha}{d}\Id_U\otimes \left(\Id_U\otimes \Id_V\right),
\end{align*}
where the last line follows from Lemma \ref{lemma20}.
\end{proof}
\subsubsection{Chaotic expansion under hypothesis (H2)}
Under hypothesis (H2), in a good choice of scalar product on $U$ and $V$ one can write $Y = (Y_1,Y_2)$, where $Y_1\perp Y_2$, and $Y_1$ is a real valued Gaussian field such that $\Var(Y(0),\nabla Y(0)) = \Id$. Since $(Y_2,\nabla Y_2(0))$ is not assumed to be a standard Gaussian vector, one must work with a linear transformation of the process $(Y_2,\nabla Y_2)$, so that it is point-wise a standard Gaussian vector.
\begin{lemma}
\label{lemma21}
Under hypothesis (H2), the spectral measure is block diagonal, where the first block corresponds to the spectral measure of $(Y_1,\nabla Y_1)$, and the second block to a linear transformation of the spectral measure of $(Y_2,\nabla Y_2)$. In particular, the first block has the expression
\[\mu_1 = \psi_1\left[(1,2i\pi \xi)\otimes (1,-2i\pi \xi)\right].\]
\end{lemma}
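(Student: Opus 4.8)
The plan is to reduce the statement to the block decomposition forced by hypothesis (H2) and then to recognize the first block as the computation already carried out in Lemma \ref{lemma15}. First I would fix the scalar products on $U$ and $V$ furnished by (H2), write $U=\R v\oplus v^\perp$ with $\|v\|=1$, and split $Y=(Y_1,Y_2)$ where $Y_1=\langle Y,v\rangle:V\to\R$ and $Y_2$ is the $v^\perp$-valued component; after the rescaling mentioned just before the statement one may assume $\Var(Y_1(0),\nabla Y_1(0))=\Id$. Hypothesis (H2) then says exactly that the Gaussian processes $Y_1$ and $Y_2$ are independent.

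Next I would promote this independence to the derivative-enriched fields: since $\nabla Y_i$ is an $L^2$-limit of linear combinations of increments of $Y_i$, the processes $(Y_1,\nabla Y_1)$ and $(Y_2,\nabla Y_2)$ are still jointly Gaussian and independent. Hence the covariance function $\Omega$ of $X=(Y,\nabla Y)\simeq(Y_1,\nabla Y_1)\oplus(Y_2,\nabla Y_2)$ is block diagonal, $\Omega=\Omega_1\oplus\Omega_2$, where $\Omega_i$ is the covariance of $(Y_i,\nabla Y_i)$. Taking Fourier transforms and invoking the linearity of the transform together with Lemma \ref{lemma4}, the spectral measure splits accordingly: $\mu=\mu_1\oplus\mu_2$ with $\mu_i=\widehat{\Omega_i}$.

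For the second block one must account for the fact that $(Y_2(0),\nabla Y_2(0))$ need not be standard. By the overall non-degeneracy of $X$ this vector is non-degenerate, so choosing a linear isomorphism $A$ of the value space $v^\perp\times(v^\perp\otimes V)$ with $A\,\Var(Y_2(0),\nabla Y_2(0))\,A^*=\Id$ (for instance the inverse symmetric square root), the field $A(Y_2,\nabla Y_2)$ is point-wise a standard Gaussian vector and has spectral measure $A\mu_2 A^*$. This is the ``linear transformation of the spectral measure of $(Y_2,\nabla Y_2)$'' appearing in the statement, and it is the second block of $\mu$ in the normalized model.

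For the first block, I would run the argument of Lemma \ref{lemma15} in the scalar-valued case: writing $r_1$ and $\psi_1$ for the covariance function and spectral measure of $Y_1$, and using that the Fourier transform turns $\partial_j$ into multiplication by $2i\pi\xi_j$, one differentiates $r_1$ under the integral sign to obtain
\[\Omega_1(t)=\int_{V^*}\Big[(1,2i\pi\xi)\otimes(1,-2i\pi\xi)\Big]\,e^{2i\pi\xi(t)}\,\dd\psi_1(\xi),\]
where $(1,2i\pi\xi)\otimes(1,-2i\pi\xi)$ denotes the rank-one positive Hermitian form attached to the vector $(1,2i\pi\xi)$ in the complexification of $(\R\times V)^*$; therefore $\mu_1=\psi_1\big[(1,2i\pi\xi)\otimes(1,-2i\pi\xi)\big]$, as claimed. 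The only genuinely delicate point is the bookkeeping: one must verify that the normalization touches only the second block, so that $\mu_1$ is literally the (un-conjugated) spectral measure of $(Y_1,\nabla Y_1)$, and that it is independence of the \emph{processes} $Y_1,Y_2$, not just pointwise independence, that makes $\Omega(t)$ block diagonal for every $t$; everything else is the computation already performed in Lemma \ref{lemma15}.
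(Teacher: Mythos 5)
Your proposal is correct and follows the same route as the paper: the block-diagonal structure comes from the independence of $Y_1$ and $Y_2$ furnished by (H2), and the first block's expression is the computation of Lemma \ref{lemma15}. The extra details you supply (independence passing to the gradient fields, the normalizing linear map for the second block) are exactly the points the paper leaves implicit.
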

\begin{proof}
The block diagonal property is a direct consequence of the independence, and the expression of the spectral measure of the first bloc follows directly from Lemma \ref{lemma15}.
\end{proof}
\begin{lemma}
\label{lemma22}
Under hypothesis (H2), the second chaotic projection $f_2$ is block diagonal, and the first block has expression
\[f_{2,1} = \alpha  \begin{pmatrix}[c|c]
-1 &  0 \\
\hline
0 & M
\end{pmatrix},\]
where $M$ is a matrix with unit trace.
\end{lemma}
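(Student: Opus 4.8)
The plan is to follow the proof of Lemma~\ref{lemma20}, inserting a parity argument to obtain the block structure. By (H2), write $U = \R v\oplus v^{\perp}$ and $Y = (Y_1,Y_2)$, with $Y_1 = \langle Y,v\rangle$ a real field satisfying $\Var(Y_1(0),\nabla Y_1(0)) = \Id$, and $Y_2$ the $v^{\perp}$-valued component, which by (H2) is independent of $(Y_1,\nabla Y_1)$. Then $X = (Y,\nabla Y)$ takes values in $W = W_1\oplus W_2$, where $W_1 = \R\times V$ carries $(Y_1,\nabla Y_1)$ and $W_2 = v^{\perp}\otimes(\R\times V)$ carries $(Y_2,\nabla Y_2)$, and the ambient Gaussian measure factors as $\eta = \eta_1\otimes\eta_2$. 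As in Section~\ref{sec41}, all the computations below are to be understood through the regularisations $f^{u,\varepsilon}$ (with $u=0$) and Lemma~\ref{lemma13}; I suppress this and write the Dirac mass directly.

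I would first establish block diagonality. Decompose $S^{2}(W^{*}) = S^{2}(W_1^{*})\oplus(W_1^{*}\otimes W_2^{*})\oplus S^{2}(W_2^{*})$. Since $\eta = \eta_1\otimes\eta_2$, the generating function of the Hermite forms factors, so that the degree-two form decomposes as $H^{2}_{(x_1,x_2)} = \big(H^{2}_{x_1},\, H^{1}_{x_1}\otimes H^{1}_{x_2},\, H^{2}_{x_2}\big)$ along the three summands. Hence the $W_1^{*}\otimes W_2^{*}$-component of $f_2 = \int_W f\,H^{2}_x\,\dd\eta(x)$ equals $\int_{W_1\times W_2} f(x_1,x_2)\,H^{1}_{x_1}\otimes H^{1}_{x_2}\,\dd\eta_1(x_1)\,\dd\eta_2(x_2)$. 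The maps $x_i\mapsto H^{1}_{x_i}$ are linear, so for fixed $x_2$ the inner integral vanishes as soon as $x_1\mapsto f(x_1,x_2)$ is even (i.e.\ invariant under $x_1\mapsto -x_1$); and it is, because $f(x_1,x_2) = \delta_0(Y_1(0))\,\delta_0(Y_2(0))\,J(\nabla Y(0))$ depends on $x_1 = (Y_1(0),\nabla Y_1(0))$ only through the even factor $\delta_0(Y_1(0))$ and through $J(\nabla Y(0)) = \sqrt{\det(\nabla Y(0)\nabla Y(0)^{T})}$, which is invariant under $\nabla Y_1(0)\mapsto -\nabla Y_1(0)$ since flipping one row of a matrix $D$ leaves $\det(DD^{T})$ unchanged.

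I would then compute the first block $f_{2,1}$, i.e.\ the $S^{2}(W_1^{*})$-component of $f_2$, which by the same decomposition equals $\E[f(X(0))\,H^{2}_{(Y_1(0),\nabla Y_1(0))}]$ with $H^{2}$ the Hermite form of the standard Gaussian on $W_1$. Conditioning on $(Y_1(0),\nabla Y_1(0))$ and using the independence from $(Y_2,\nabla Y_2)$ turns this into $\E[g(Y_1(0),\nabla Y_1(0))\,H^{2}_{(Y_1(0),\nabla Y_1(0))}]$, where $g(y_1,d_1) = \delta_0(y_1)\,h(d_1)$ and $h(d_1) := \E[\delta_0(Y_2(0))\,J(d_1;\nabla Y_2(0))]\geq 0$ is not identically zero. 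Writing $H^{2}$ in matrix form and using $Y_1(0)\perp\nabla Y_1(0)$ together with Lemma~\ref{lemma13}: the field--gradient block carries the factor $\E[\delta_0(Y_1(0))\,Y_1(0)] = 0$ and so vanishes; the field--field entry equals $-\alpha$ with $\alpha := \E[\delta_0(Y(0))\,J(\nabla Y(0))] = \rho_{Y(0)}(0)\,\E[J(\nabla Y(0))\mid Y(0)=0] > 0$ (the Kac--Rice density, which specialises to $\rho(0)\E[J(\nabla Y(0))]$ under (H1)); and the gradient--gradient block equals $\alpha M$ with $M := \E[h(\nabla Y_1(0))(\nabla Y_1(0)\nabla Y_1(0)^{T} - \Id_V)]\,/\,\E[h(\nabla Y_1(0))]$.

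It remains to check $\Tr M = 1$. The Jacobian $J(D) = \sqrt{\det(DD^{T})}$ is homogeneous of degree $1$ in the first row of $D$ (scaling one row by $c\geq 0$ multiplies $\det(DD^{T})$ by $c^{2}$), hence $h$ is homogeneous of degree $1$ on $V\simeq\R^{d}$; Lemma~\ref{lemma19} with $m=1$ then gives $\E[h(\nabla Y_1(0))(\|\nabla Y_1(0)\|^{2} - d)] = \E[h(\nabla Y_1(0))]$, that is $\Tr(\alpha M) = \alpha$, so $\Tr M = 1$. The only genuinely delicate point is the justification of the distributional manipulations, which is precisely what the $\varepsilon$-regularisation and Lemma~\ref{lemma13} handle (as in Lemma~\ref{lemma12}); once that is in place, the parity of $\delta_0$ and the row-scaling/row-parity of $J$ do all the work.
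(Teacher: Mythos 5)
Your proof is correct and follows essentially the same route as the paper: block diagonality from the independence of $Y_1$ and $Y_2$ together with the evenness of $f$ in $(Y_1(0),\nabla Y_1(0))$ (the ``symmetry of the Jacobian map''), explicit computation of the first block via Lemma \ref{lemma13} and the orthogonality $Y_1(0)\perp\nabla Y_1(0)$, and the trace identity from Lemma \ref{lemma19} applied to the degree-one homogeneity of $J$ in the first row. Your version simply spells out the parity and conditioning steps that the paper leaves implicit, and correctly reads $\|\nabla Y_1(0)\|^2$ where the paper's trace computation has a typo.
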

\begin{proof}
The block diagonal property is a direct consequence of the independence of $Y_1$ and $Y_2$ and the symmetry of the Jacobian map. Moreover, one has
\[\E[f(X(0))Y_1(0)\nabla Y_1(0)] = 0\quand \E\left[f(X(0))(Y_1(0)^2-1)\right] = -\alpha.\]
Let 
\[ M = \frac{1}{\alpha}\E[f(X(0))(\nabla Y_1(0)\nabla Y_1(0)^T)-\Id_V)].\]
Then
\[\Tr(M) = \frac{1}{\alpha}\E\left[f(X(0))\left(\|Y_1(0)\|^2-\E[\|Y_1(0)\|^2]\right)\right] = 1,\]
where the last equality follows from Lemma \ref{lemma19} applied to the determinant as a function of the first column.
\end{proof}
\subsubsection{Chaotic expansion under hypothesis (H3)}
Under hypothesis (H3), one has $(Y,\nabla Y) = (\nabla F, \nabla^2 F)$, where $\nabla^2 F$ denotes the Hessian of the random field $F$, which takes values in $S^2(V^*)$, the space of symmetric bilinear map on $V$ equipped with the Frobenius scalar product. We denote by $q$ the covariance function of $F$ and $\omega$ its spectral measure. In that case, the process $(\nabla F, \nabla^2F)$ takes values in the space 
\[W = V\times S^2(V^*).\]
 On can assume that the scalar product defined on $V$ is such that $\nabla F$ is point-wise a standard Gaussian vector, and after rescaling, that $F$ is point-wise a standard Gaussian vector. In general $\nabla^2 F$ is not a standard Gaussian vector, and one must work with a linear transformation, denoted by $TF$, such that $TF$ is a standard Gaussian vector on $\Sym(V)$. Note that the distribution of $TF$ is exactly $\frac{1}{\sqrt{2}}\GOE$, where $\GOE$ is the Gaussian Orthogonal Ensemble. If $F$ has an isotropic distribution (i.e. $q$ is a radial function), then the linear transformation is explicit and is the object of the next Lemma \ref{lemma25}.  In that case we define, for $v,w$ two arbitrary unit orthogonal vectors the quantity
\[\beta = \E[\nabla^2 F(v,w)^2],\]
which does not depend of the choice of $v,w$ by the invariance by rotation, and the constants
\[\beta_0 = \frac{d}{d+2},\quand \gamma =1\pm \sqrt{\frac{2}{d+2}},\]
where the choice $\pm$ is arbitrary.
\begin{lemma}
\label{lemma14}
Assume that $F$ has an isotropic distribution. For orthogonal unit vectors $v,w\in V$,
\[\E[\nabla^2 F(v,v)^2] = 3\beta,\quad \E[\nabla^2 F(v,w)^2] = \E[\nabla^2 F(v,v)\nabla^2 F(w,w)] = \beta.\]
Moreover, 
\[\beta\geq \beta_0\]
with equality if and only if $F$ is the isotropic random wave model.
\end{lemma}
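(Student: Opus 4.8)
The plan is to pass to the spectral domain. Since $F$ has an isotropic distribution, its covariance function $q$ is radial, hence its spectral measure $\omega$ is invariant under the orthogonal group of $V$; moreover $\omega(V^*) = q(0) = \E[F(0)^2] = 1$, so $\omega$ is a probability measure, and the normalization $\Var(\nabla F(0)) = \Id_V$ reads $4\pi^2\int_{V^*}\xi_i\xi_j\,\dd\omega(\xi) = \delta_{ij}$, which upon taking the trace gives
\[\int_{V^*}\|\xi\|^2\,\dd\omega(\xi) = \frac{d}{4\pi^2}.\]
Because $F$ is $\CC^2$, the measure $\omega$ has a finite fourth moment and one may differentiate the spectral representation twice under the integral sign.

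First I would express the second-order covariances as spectral moments: for an orthonormal basis $(e_i)$ of $V$ one has $\E[\partial_{ij}F(0)\,\partial_{kl}F(0)] = 16\pi^4\int_{V^*}\xi_i\xi_j\xi_k\xi_l\,\dd\omega(\xi)$. Since $\omega$ is rotation invariant, this symmetric $4$-tensor is a scalar multiple of the symmetrized Kronecker tensor, namely $c\,(\delta_{ij}\delta_{kl}+\delta_{ik}\delta_{jl}+\delta_{il}\delta_{jk})$, and contracting the indices pairwise identifies $c = \frac{1}{d(d+2)}\int_{V^*}\|\xi\|^4\,\dd\omega$. Setting $\beta = 16\pi^4 c = \frac{16\pi^4}{d(d+2)}\int_{V^*}\|\xi\|^4\,\dd\omega$ and evaluating the tensor at $(v,v,v,v)$, $(v,w,v,w)$ and $(v,v,w,w)$ for orthonormal $v,w$ yields respectively $\E[\nabla^2F(v,v)^2] = 3\beta$ and $\E[\nabla^2F(v,w)^2] = \E[\nabla^2F(v,v)\nabla^2F(w,w)] = \beta$; in particular these quantities do not depend on the chosen orthonormal pair, which justifies the definition of $\beta$.

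For the inequality I would apply Jensen's inequality to the convex map $t\mapsto t^2$ against the probability measure $\omega$:
\[\int_{V^*}\|\xi\|^4\,\dd\omega \;\geq\; \left(\int_{V^*}\|\xi\|^2\,\dd\omega\right)^2 \;=\; \frac{d^2}{16\pi^4},\]
so that $\beta \geq \frac{16\pi^4}{d(d+2)}\cdot\frac{d^2}{16\pi^4} = \frac{d}{d+2} = \beta_0$. Equality in Jensen holds if and only if $\|\xi\|^2$ is $\omega$-almost surely equal to $d/4\pi^2$, i.e. $\omega$ is supported on the sphere of radius $\sqrt d/2\pi$; combined with the rotation invariance of $\omega$ this forces $\omega$ to be the uniform probability measure $\sigma$ on that sphere, which is exactly the isotropic random wave model, and the converse is immediate since $\omega = \sigma$ gives $\|\xi\|\equiv\sqrt d/2\pi$. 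The argument has no genuine difficulty; the only points requiring care are keeping track of the $2\pi$ normalizations and invoking the two elementary facts used above, namely that a rotation-invariant symmetric $4$-tensor is a multiple of the symmetrized Kronecker tensor and that the uniform measure is the unique rotation-invariant probability measure on a sphere.
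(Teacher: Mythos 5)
Your proof is correct, but it runs through the spectral domain whereas the paper stays entirely in physical space. For the identities $\E[\nabla^2F(v,v)^2]=3\beta$ etc., the paper uses the commutation of partial derivatives together with a $45^\circ$ rotation of the pair $(v,w)$ (evaluating $\nabla^2F$ at $(v\pm w)/\sqrt2$) to derive the linear relation $\E[\nabla^2F(v,v)^2]=\E[\nabla^2F(v,v)\nabla^2F(w,w)]+2\E[\nabla^2F(v,w)^2]$; you instead invoke the classification of rotation-invariant symmetric $4$-tensors as multiples of the symmetrized Kronecker tensor and read the three values off the tensor $16\pi^4\int\xi_i\xi_j\xi_k\xi_l\,\dd\omega$. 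For the inequality, the paper computes $\E\bigl[(F+\tfrac1d\Delta F)^2\bigr]=\beta\tfrac{d+2}{d}-1\geq 0$, so that equality forces $F$ to solve the Helmholtz equation $\Delta F+dF=0$ pointwise (the paper's displayed expression has a sign typo, $F-\tfrac1d\Delta F$, but the stated value of the variance is the one for $F+\tfrac1d\Delta F$); you apply Jensen's inequality to $\|\xi\|^2$ under the spectral probability measure, so that equality forces $\omega$ to be supported on the sphere of radius $\sqrt d/2\pi$, which is the paper's spectral definition of a random wave, and rotation invariance then pins down $\omega=\sigma$. The two equality cases match because the paper explicitly identifies these two characterizations of random waves. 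Your route has the advantage of making the constant $\beta=\frac{16\pi^4}{d(d+2)}\int\|\xi\|^4\dd\omega$ explicit as a spectral moment and of fitting the spectral viewpoint of the rest of the paper; the paper's route is more elementary (no Fourier transform, no tensor classification) and produces the quantity $F+\tfrac1d\Delta F$ whose vanishing is exactly the eigenfunction equation. One minor point to make explicit in your version: the finiteness of the fourth spectral moment (needed both to define $\beta$ and to differentiate under the integral) follows for a Gaussian field from the a.s.\ $\CC^2$ regularity assumed in the paper, since the second derivatives are then Gaussian variables with finite variance.
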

\begin{proof}
In the following, $v,w$ are orthogonal unit vectors in $V$. By symmetry of the derivatives, one has
\[\E[\nabla^2 F(v,w)^2] = \partial_v\partial_v\partial_w\partial_w q(0) = \E[\nabla^2 F(v,v)\nabla^2 F(w,w)].\]
Using the invariance by rotation
\begin{align*}
E[\nabla^2 F(v,v)^2] &= \frac{1}{2}\E\left[\nabla^2 F\left(\frac{v+w}{\sqrt{2}},\frac{v+w}{\sqrt{2}}\right)^2\right] + \frac{1}{2}\E\left[\nabla^2 F\left(\frac{v-w}{\sqrt{2}},\frac{v-w}{\sqrt{2}}\right)^2\right]\\
&= \frac{1}{4}\left(\E[\nabla^2 F(v,v)^2] + \E(\nabla^2 F(w,w)^2]\right) + \frac{1}{2}\E[\nabla^2 F(v,v)\nabla^2 F(w,w)] + \E[\nabla^2 F(v,w)^2]
\end{align*}
We deduce that
\[E[\nabla^2 F(v,v)^2] = \E[\nabla^2 F(v,v)\nabla^2 F(w,w)] + 2\E[\nabla^2 F(v,w)^2] = 3\beta.\]
Moreover, a quick computation shows that
\[\E\left[\left(F-\frac{1}{d}\Delta F\right)^2\right] = \beta\frac{d+2}{d}-1.\]
We deduce that $\beta\geq \beta_0$ with equality if and only if $F$ is the random wave model. 
\end{proof}
\begin{lemma}
\label{lemma25}
Assume that $F$ has an isotropic distribution. Then the vector
\[TF = \frac{1}{\sqrt{2\beta}}\left(\nabla^2 F - \frac{\gamma}{d}(\Delta F)\Id_V\right)\]
is a standard Gaussian vector. 
\end{lemma}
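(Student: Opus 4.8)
The plan is to verify directly that the linear map $T$ sending $\nabla^2 F$ to $\frac{1}{\sqrt{2\beta}}\bigl(\nabla^2 F - \frac{\gamma}{d}(\Delta F)\Id_V\bigr)$ turns the Gaussian vector $\nabla^2 F(0)\in \Sym(V)$ into a standard Gaussian vector for the Frobenius inner product, i.e. that $\E[\langle TF, A\rangle\langle TF, B\rangle] = \langle A, B\rangle$ for all symmetric matrices $A, B$. Since both sides are quadratic forms on $\Sym(V)$, by polarization it suffices to check $\E[\langle TF, A\rangle^2] = \|A\|_F^2$ for every symmetric $A$, and by the rotational invariance of the law of $\nabla^2 F$ (which commutes with the action of $O(V)$ on $\Sym(V)$ and hence with $T$, since $\Delta F$ and $\Id_V$ are rotation-invariant), it is enough to check this on a set of representatives of $\Sym(V)$ modulo conjugation — for instance on diagonal matrices, or even just on the two building blocks $A = \mathrm{diag}(1,0,\dots,0)$ and $A = E_{12}+E_{21}$, whose orbits span. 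For the off-diagonal block $A = E_{12}+E_{21}$ one has $\langle \nabla^2 F, A\rangle = 2\nabla^2 F(e_1,e_2)$ and $\langle \Id_V, A\rangle = 0$, so $\E[\langle TF,A\rangle^2] = \frac{4}{2\beta}\E[\nabla^2F(e_1,e_2)^2] = \frac{4\beta}{2\beta}=2 = \|A\|_F^2$, using Lemma \ref{lemma14}.

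The only remaining computation is the diagonal one. Write $A = \mathrm{diag}(1,0,\dots,0)$, so $\|A\|_F^2 = 1$, $\langle \nabla^2 F, A\rangle = \nabla^2 F(e_1,e_1) =: \partial_1^2 F$, and $\langle \Id_V, A\rangle = 1$. Then
\[
\langle TF, A\rangle = \frac{1}{\sqrt{2\beta}}\Bigl(\partial_1^2 F - \frac{\gamma}{d}\,\Delta F\Bigr),
\]
and expanding the square,
\[
\E[\langle TF,A\rangle^2] = \frac{1}{2\beta}\Bigl(\E[(\partial_1^2 F)^2] - \frac{2\gamma}{d}\E[\partial_1^2 F\,\Delta F] + \frac{\gamma^2}{d^2}\E[(\Delta F)^2]\Bigr).
\]
By Lemma \ref{lemma14}, $\E[(\partial_1^2F)^2]=3\beta$, $\E[\partial_i^2 F\,\partial_j^2 F]=\beta$ for $i\neq j$, hence $\E[\partial_1^2 F\,\Delta F] = 3\beta + (d-1)\beta = (d+2)\beta$ and $\E[(\Delta F)^2] = d\cdot 3\beta + d(d-1)\beta = d(d+2)\beta$. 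Substituting,
\[
\E[\langle TF,A\rangle^2] = \frac{1}{2\beta}\Bigl(3\beta - \frac{2\gamma(d+2)\beta}{d} + \frac{\gamma^2(d+2)\beta}{d}\Bigr) = \frac{1}{2}\Bigl(3 - \tfrac{(d+2)}{d}(2\gamma - \gamma^2)\Bigr).
\]
Setting this equal to $1$ gives $(d+2)(2\gamma-\gamma^2) = d$, i.e. $\gamma^2 - 2\gamma + \frac{d}{d+2} = 0$, whose roots are exactly $\gamma = 1 \pm \sqrt{1 - \frac{d}{d+2}} = 1\pm\sqrt{\frac{2}{d+2}}$, matching the definition of $\gamma$. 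This is the one genuine identity to check, and it is the crux: everything else is bookkeeping with the rotational symmetry.

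The main (minor) obstacle is making the reduction-by-symmetry step airtight: one must argue that the orbits of $A=\mathrm{diag}(1,0,\dots,0)$, $A=E_{12}+E_{21}$ under $O(V)$-conjugation, together with bilinearity, determine $\E[\langle TF,\cdot\rangle^2]$ on all of $\Sym(V)$. This follows because $T$ is $O(V)$-equivariant and $\nabla^2F(0)$ has an $O(V)$-invariant law, so $Q(A):=\E[\langle TF,A\rangle^2]$ is an $O(V)$-invariant quadratic form on $\Sym(V)$; the space of such forms is two-dimensional, spanned by $A\mapsto\|A\|_F^2$ and $A\mapsto(\Tr A)^2$, and $Q$ agrees with $\|\cdot\|_F^2$ on the two chosen matrices, one of which has zero trace and one of which does not, pinning down both coefficients. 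Hence $Q=\|\cdot\|_F^2$ on all of $\Sym(V)$, which is exactly the claim that $TF$ is a standard Gaussian vector.
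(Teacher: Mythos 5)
Your proposal is correct and follows essentially the same route as the paper: both proofs reduce to the moment identities of Lemma \ref{lemma14} and arrive at the same quadratic $\gamma^2-2\gamma+\tfrac{d}{d+2}=0$, the paper by imposing $\Var\bigl((TF)(v,v)\bigr)=1$ and $\E\bigl[(TF)(v,v)(TF)(w,w)\bigr]=0$ for orthogonal $v,w$, you by imposing unit Frobenius variance on one diagonal and one off-diagonal test matrix. The only added value in your write-up is that you make explicit the invariance argument (the space of $O(V)$-invariant quadratic forms on $\Sym(V)$ is spanned by $\|\cdot\|_F^2$ and $(\Tr\cdot)^2$) showing why these finitely many checks suffice, a step the paper leaves implicit.
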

\begin{proof}
The constant $\gamma$ must satisfies the identities
\[0= \E\left[\left(\nabla^2 F(v,v)-\gamma\frac{\Delta F}{d}\right)\left(\nabla^2 F(w,w)-\gamma\frac{\Delta f}{d}\right)\right] = \beta\left[1 - \frac{2(d+2)}{d}\gamma + \frac{(d+2)}{d}\gamma^2\right],\]
and 
\[ 1 = \frac{1}{2\beta}\E\left[\left(\nabla^2 F(v,v)-\gamma\frac{\Delta F}{d}\right)^2\right] = \frac{1}{2}\left[3 - \frac{2(d+2)}{d}\gamma + \frac{(d+2)}{d}\gamma^2\right].\]
We deduce that $\gamma$ is a root of an explicit second degree polynomial, whose roots are given by
\[\gamma  = 1 \pm\sqrt{\frac{2}{d+2}}.\]
\end{proof}
\begin{lemma}
\label{lemma23}
Under hypothesis (H3), the spectral measure of $(\nabla F, \widetilde{\nabla^2 F})$ can be written as
\[\mu = \omega\left[2i\pi\xi,S(\xi)\right]\otimes \left[-2i\pi\xi,\overline{S(\xi)}\right],\]
where the map $S(\xi)$ is a complex quadratic form in the variable $\xi$. If the distribution of $F$ is isotropic, then the spectral measure of $(\nabla F,TF)$ is
\[\mu = \omega\left[2i\pi\xi,-\frac{4\pi^2}{\sqrt{2\beta}}\left(\xi\otimes\xi-\frac{\gamma}{d}\|\xi\|^2\Id\right)\right]\otimes \left[-2i\pi\xi, -\frac{4\pi^2}{\sqrt{2\beta}}\left(\xi\otimes\xi-\frac{\gamma}{d}\|\xi\|^2\Id\right)\right].\]
\end{lemma}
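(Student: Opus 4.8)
The statement is an elaboration of the principle already used in Lemma~\ref{lemma15}: under the Fourier transform, differentiating a stationary field multiplies its spectral amplitude by $2i\pi\xi$. The plan is to track the amplitude of the pair through two operations — applying $\nabla$ and $\nabla^2$ to $F$, then applying the deterministic linear operator $T$ with $\widetilde{\nabla^2 F}=TF$ — and to read $\mu$ off the resulting rank-one structure.

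First I would fix a white-noise spectral representation of $F$. Writing $\dd B$ for the Hermitian-symmetric complex Gaussian random measure on $V^*$ with control measure $\omega$, one has $F(v)=\int_{V^*}e^{2i\pi\xi(v)}\,\dd B(\xi)$, and since $F$ is $\CC^2$ (indeed $\nabla F=Y$ is $\CC^2$ by assumption, so $\int_{V^*}\|\xi\|^4\,\dd\omega(\xi)<\infty$) one may differentiate under the integral to obtain
\[\nabla F(v)=\int_{V^*}(2i\pi\xi)\,e^{2i\pi\xi(v)}\,\dd B(\xi)\quand \nabla^2 F(v)=\int_{V^*}(-4\pi^2\,\xi\otimes\xi)\,e^{2i\pi\xi(v)}\,\dd B(\xi).\]
Since $\widetilde{\nabla^2 F}=TF$ is obtained from $\nabla^2 F$ by applying the fixed linear map $T$ on $S^2(V^*)$ pointwise, the pair $X=(\nabla F,TF)$ then has the representation $X(v)=\int_{V^*}A(\xi)\,e^{2i\pi\xi(v)}\,\dd B(\xi)$ with
\[A(\xi)=\bigl(2i\pi\xi,\;S(\xi)\bigr),\qquad S(\xi):=T\bigl(-4\pi^2\,\xi\otimes\xi\bigr),\]
where $S(\xi)$ is a homogeneous degree-two polynomial in $\xi$ valued in $S^2(V^*)$ (``a complex quadratic form in $\xi$''). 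Computing the covariance, and using $A(-\xi)=\overline{A(\xi)}$ together with the symmetry of $\omega$, gives $\Omega(v)=\E[X(w+v)\otimes X(w)]=\int_{V^*}e^{2i\pi\xi(v)}\,\bigl(A(\xi)\otimes\overline{A(\xi)}\bigr)\,\dd\omega(\xi)$; by uniqueness of the Fourier transform (Lemma~\ref{lemma4} and the Remark following it) the spectral measure of $X$ is therefore the rank-one positive Hermitian measure $\dd\mu(\xi)=\bigl(A(\xi)\otimes\overline{A(\xi)}\bigr)\,\dd\omega(\xi)$, i.e.\ $\mu=\omega\,[2i\pi\xi,S(\xi)]\otimes[-2i\pi\xi,\overline{S(\xi)}]$, which is the first claimed identity; its positivity is Lemma~\ref{lemma5}.

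For the isotropic case it then suffices to insert the explicit transformation supplied by Lemma~\ref{lemma25}, namely $T(H)=\tfrac{1}{\sqrt{2\beta}}\bigl(H-\tfrac{\gamma}{d}\Tr(H)\,\Id_V\bigr)$. Evaluating at $H=-4\pi^2\,\xi\otimes\xi$ and using $\Tr(\xi\otimes\xi)=\|\xi\|^2$ yields $S(\xi)=-\tfrac{4\pi^2}{\sqrt{2\beta}}\bigl(\xi\otimes\xi-\tfrac{\gamma}{d}\|\xi\|^2\Id\bigr)$, which is exactly the second displayed formula.

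No step presents a genuine obstacle: the only points requiring a little care are the justification of termwise differentiation of the spectral representation (routine, since $F$ is $\CC^2$), and the bookkeeping that identifies the rank-one tensor $A(\xi)\otimes\overline{A(\xi)}\,\dd\omega$ with the notation of Section~\ref{sec33} and checks that it defines a symmetric positive Hermitian measure over the complexification of $W$ in the sense of the Remark after Lemma~\ref{lemma4}.
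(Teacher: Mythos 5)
Your argument is correct and follows essentially the same route as the paper, which simply invokes Lemma \ref{lemma15} (differentiation becomes multiplication by $2i\pi\xi$ under the Fourier transform, giving the rank-one amplitude $(2i\pi\xi, S(\xi))$) together with the explicit linear normalization $T$ from Lemma \ref{lemma25} for the isotropic case. Your write-up merely makes the spectral (white-noise) representation and the pointwise application of $T$ explicit, which the paper leaves implicit.
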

\begin{proof}
Both expression follows directly from Lemma \ref{lemma15} and Lemma \ref{lemma25}.
\end{proof}
\begin{lemma}
\label{lemma24}
Under hypothesis (H3), the second chaotic projection $f_2$ has expression
\[f_{2} = \alpha  \begin{pmatrix}[c|c]
-\Id_V &  0 \\
\hline
0 & M
\end{pmatrix}.\]
If the distribution of $F$ is isotropic, then the bilinear map $f_2$ has expression, for $v\in V$ and $S$ a symmetric bilinear map
\[f_2(v,S) = -\alpha\|v\|^2 + A\Tr(S^2) + B\Tr(S)^2.\]
Moreover
\[A\left(1-\frac{2\gamma}{2}+\frac{\gamma^2}{d}\right) + B\left(1-\gamma\right)^2 = \frac{2\beta_0}{d}\alpha.\]
\end{lemma}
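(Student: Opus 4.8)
\emph{The block form.} This is obtained exactly as in Lemma~\ref{lemma20} and Lemma~\ref{lemma22}. Under (H3) we have $X=(\nabla F,\nabla^2 F)$, which we standardize to the standard Gaussian vector $(\nabla F(0),TF(0))$ on $W=V\times\Sym(V)$ via Lemma~\ref{lemma25}; moreover, since the covariance function of $F$ is even, every odd-order derivative of it vanishes at $0$, so $\nabla F(0)\perp\nabla^2 F(0)$. With $f^{0}(x,D)=\delta_0(x)\,|\det D|$, Lemma~\ref{lemma13} turns $\E[\delta_0(\nabla F(0))(\cdots)]$ into $\rho(0)\,\E[(\cdots)\mid\nabla F(0)=0]$; evaluating the three blocks of $f_2=\E[f^0(X(0))\,H^2]$ (computed in the standardized coordinates) separately, the $\nabla F$--$(\nabla^2 F)$ cross block vanishes because the conditional mean of $\nabla F(0)$ is $0$, the $\nabla F$--$\nabla F$ block is $\rho(0)\,\E[|\det\nabla^2 F(0)|]\,(0-\Id_V)=-\alpha\,\Id_V$, and the remaining block is $\alpha M:=\rho(0)\,\E\big[|\det\nabla^2 F(0)|\,(H\otimes H-\Id)\big]$, where $H=TF(0)$ is a standard Gaussian vector on $\Sym(V)$ and $\alpha=\rho(0)\E[|\det\nabla^2 F(0)|]>0$. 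This gives the asserted block form.

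\emph{The form when $F$ is isotropic.} By Lemma~\ref{lemma25}, when $F$ is isotropic $\nabla^2 F(0)$ is a fixed linear image $\Phi(H)$ of $H=TF(0)$, and the explicit formula there shows $\Phi$ is \emph{equivariant} for the conjugation action $S\mapsto O^\top S O$ of $O\in O(V)$; hence $|\det\nabla^2 F(0)|=|\det\Phi(H)|$ is a conjugation-\emph{invariant} function of the standard Gaussian $H$, and therefore $\alpha M$ is an $O(V)$-invariant quadratic form on $\Sym(V)$. For $d\ge 2$ the $O(V)$-module $\Sym(V)=\R\,\Id_V\oplus\Sym_0(V)$ (traceless part) is a sum of two inequivalent irreducibles, so by Schur's lemma — equivalently, by the classical description of conjugation invariants as polynomials in $\Tr(S),\Tr(S^2),\dots$ — the invariant quadratic forms span the $2$-dimensional space generated by $S\mapsto\Tr(S^2)$ and $S\mapsto\Tr(S)^2$; the case $d=1$ is trivial. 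Thus $\alpha M(S)=A\,\Tr(S^2)+B\,\Tr(S)^2$, and combining with the first part, $f_2(v,S)=-\alpha\|v\|^2+A\,\Tr(S^2)+B\,\Tr(S)^2$.

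\emph{The identity.} It is nothing but the value of the trace of $\alpha M$ over $\Sym(V)$, or — what yields the same linear relation in $(A,B)$ — the value of $\alpha M$ at $S^\star=\xi\xi^\top-\tfrac{\gamma}{d}\Id_V$ for a unit vector $\xi$, for which $\Tr\big((S^\star)^2\big)=1-\tfrac{2\gamma}{d}+\tfrac{\gamma^2}{d}$ and $\Tr(S^\star)^2=(1-\gamma)^2$ (this $S^\star$ is, up to scaling, the Hessian component of the image of the spectral measure in Lemma~\ref{lemma23}, which explains the coefficients, and is the purpose of the identity: together with Lemma~\ref{lemma14} it forces the second-chaos cancellation for random waves). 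Fixing an orthonormal basis $(e_a)$ of $\Sym(V)$ for the Frobenius product, the trace $\sum_a\big(A\Tr(e_a^2)+B\Tr(e_a)^2\big)$ equals $A\dim\Sym(V)+B\|\Id_V\|^2=A\tfrac{d(d+1)}{2}+Bd$, whereas $\sum_a(\alpha M)(e_a,e_a)=\rho(0)\,\E\big[|\det\nabla^2 F(0)|\,(\|H\|^2-\dim\Sym(V))\big]$, which by Lemma~\ref{lemma19} applied to the $d$-homogeneous function $H\mapsto|\det\Phi(H)|$ equals $d\,\rho(0)\E[|\det\nabla^2 F(0)|]=d\alpha$. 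Equating the two gives $A\tfrac{d+1}{2}+B=\alpha$; multiplying by $\tfrac{2}{d+2}$ and substituting the elementary identities $\gamma(2-\gamma)=\beta_0$ and $(1-\gamma)^2=\tfrac{2}{d+2}=\tfrac{2\beta_0}{d}$ (both immediate from $\gamma=1\pm\sqrt{2/(d+2)}$ and $\beta_0=\tfrac{d}{d+2}$) turns it into $A\big(1-\tfrac{2\gamma}{d}+\tfrac{\gamma^2}{d}\big)+B(1-\gamma)^2=\tfrac{2\beta_0}{d}\alpha$.

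The only genuinely delicate point is bookkeeping: because $\nabla^2 F(0)$ is \emph{not} standard Gaussian, one must consistently work with the standardized variable $H=TF(0)$ before invoking Lemma~\ref{lemma19} and the invariant-theory dichotomy, using the equivariance of $\Phi$ so that $|\det\nabla^2 F(0)|$ remains simultaneously $O(V)$-invariant and $d$-homogeneous in $H$; the $\delta_0$-manipulations are justified throughout as $\varepsilon\to0$ limits via Lemmas~\ref{lemma12}--\ref{lemma13}.
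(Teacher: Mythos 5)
Your proposal is correct and follows essentially the same route as the paper: independence of $\nabla F(0)$ and $TF(0)$ for the block structure, rotation invariance to reduce the quadratic form on $\Sym(V)$ to the span of $\Tr(S^2)$ and $\Tr(S)^2$, and Lemma~\ref{lemma19} applied to the $d$-homogeneous determinant to obtain $\tfrac{d(d+1)}{2}A+dB=d\alpha$, which the paper derives coordinate-wise via $C=A+B$ and you derive equivalently as a trace over $\Sym(V)$. You also correctly read the displayed identity with $\tfrac{2\gamma}{d}$ in place of the statement's $\tfrac{2\gamma}{2}$ (a typo, as the paper's own proof and Equation~\eqref{eq:22} confirm).
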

\begin{proof}
The first property is a consequence of the independence between $\nabla F$ and $TF$. As for the second one, we define the constants, for $v,w$ orthogonal unit vectors in $V$
\[A = \E\left[f(X)\left(2TF(v,w)^2-1\right)\right],\quad B = \E\left[f(X)TF(v,v)TF(w,w)\right]\]
and
\[C = \E\left[f(X)\left(TF(v,v)^2-1\right)\right].\]
Using the invariance by rotation one has, following the proof of Lemma \ref{lemma25}, the identity
\[C = A + B.\numberthis\label{eq:18}\]
Since the distribution of $X$ in invariant by rotation, $f_2$ is a quadratic form on the space of symmetric bilinear forms which is invariant by rotation. It must then by a symmetric quadratic form on the eigenvalues, and must then be a linear combination of the quadratic forms $\Tr(S^2)$ and $\Tr(S)^2$. The identification of the coefficient $A$ and $B$ follows by a direct computation of the quadratic form, for instance by fixing an orthogonal basis on $V$.\jump

It also follows from Lemma \ref{lemma19} and Equation \eqref{eq:18} that
\[d\alpha = \E[f(X)(\|TF\|^2-\E[\|TF\|^2])] = \frac{d(d-1)}{2}A + dC = \frac{d(d+1)}{2}A + dB.\]
The conclusion follows by observing the identities
\[A\left(1-\frac{2\gamma}{d}+\frac{\gamma^2}{d}\right) + B\left(1-\gamma\right)^2 = \frac{d+1}{d+2} A + \frac{2}{d+2}B = \frac{2}{d+2}\alpha = 
\frac{2\beta_0}{d}\alpha.\]
\end{proof}
As in the subsection concerning hypothesis (H1), the fourth chaotic component $f_4$ of $f$ can be seen as a symmetric bilinear map on $S^2(W^*)$, which can be decomposed as the direct sum
\[S^2(W^*) = S^2(V)\oplus S^2(S^2(V))\oplus \left(V\otimes S^2(V)\right).\numberthis\label{eq:21}\]
\begin{lemma}
\label{lemma27}
The subspace $U\otimes S^2(V)$ of $S^2(W^*)$ is stable by $f_4$. Moreover, the restriction of $f_4$ to this subspace, denoted by $f_4^r$, has expression, for $v\in V$ and $S$ a symmetric bilinear map
\[f_4^r(v\otimes S) = \|v\|^2\left(A\Tr(S^2) + B\Tr(S)^2\right),\]
where $A,B$ are defined in Lemma \ref{lemma24}.
\end{lemma}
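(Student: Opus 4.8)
The plan is to run, essentially verbatim, the argument of Lemma~\ref{lemma26}, replacing the pair $(Y(0),\nabla Y(0))$ there by $(\nabla F(0),TF(0))$, where $TF$ is the standardized Hessian of Lemma~\ref{lemma25} and $X=(\nabla F,TF)$ takes values in $W=V\oplus\Sym(V)$.

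\textbf{Step 1: the block decoupling.} In these coordinates the Kac--Rice integrand \eqref{eq:02} for critical points of $F$ reads $f(x,D)=\delta_0(x)\,g(D)$, where $g$ is the rotation-invariant function of the standardized Hessian obtained from $|\det\nabla^2F|$ via the substitution in Lemma~\ref{lemma25}. Two observations pin down the structure of $f_4$ relative to \eqref{eq:21}: first, $\nabla F(0)$ and $\nabla^2F(0)$ are independent — the third derivatives of the even covariance $q$ of $F$ vanish at the origin — and $TF(0)$ is a fixed linear image of $\nabla^2F(0)$, so $H^4_{X(0)}$ factorizes along the orthogonal splitting $W=V\oplus\Sym(V)$; second, $f$ is even in its $\nabla F$-variable and depends on the Hessian only through $|\det|$, so integrating against $f$ annihilates every component of $f_4$ of odd bidegree in $(\nabla F,TF)$. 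Consequently the cross-blocks of $f_4$ between $V\otimes S^2(V)$ and the two pure summands $S^2(V)$, $S^2(S^2(V))$ of \eqref{eq:21} (which carry bidegree $(3,1)$ and $(1,3)$) vanish; the block $V\otimes S^2(V)$ is therefore $f_4$-stable, and its restriction is the tensor of the two second Hermite forms,
\[
f_4^r=\E\!\left[f(X(0))\,\big(\nabla F(0)\otimes\nabla F(0)-\Id_V\big)\otimes\big(TF(0)\otimes TF(0)-\Id_{\Sym(V)}\big)\right].
\]

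\textbf{Step 2: collapsing the Dirac mass and identifying $f_2$.} As in Lemma~\ref{lemma26}, I would then condition on $\nabla F(0)=0$ using Lemma~\ref{lemma13} together with the independence of $\nabla F(0)$ and $TF(0)$: on the support of $\delta_0$ the first tensor factor becomes $-\Id_V$, whence
\[
f_4^r=-\,\Id_V\otimes\E\!\left[f(X(0))\big(TF(0)\otimes TF(0)-\Id_{\Sym(V)}\big)\right].
\]
The remaining expectation is precisely the restriction of the second chaos $f_2$ to the $\Sym(V)$-variable, which was computed under the isotropy assumption in Lemma~\ref{lemma24}: as a quadratic form on $S^2(V)$ it equals $S\mapsto A\,\Tr(S^2)+B\,\Tr(S)^2$. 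Evaluating the identity above on a split tensor $v\otimes S$, with $v\in V$ and $S\in S^2(V)$, and substituting this value yields the announced expression for $f_4^r(v\otimes S)$.

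\textbf{Main obstacle.} As with Lemma~\ref{lemma26}, the difficulty is combinatorial rather than analytic: one must make fully precise the bidegree/block decomposition of $f_4$ with respect to \eqref{eq:21} — in particular that the only surviving diagonal term on the mixed block is the bidegree-$(2,2)$ contraction $H^2_{\nabla F}\otimes H^2_{TF}$, with the correct normalization — and carry the standardizing constants of Lemma~\ref{lemma25} consistently, so that ``$TF$ standard Gaussian'' is used throughout. Once the reduction $f_4^r=-\,\Id_V\otimes(f_2\text{ restricted to the Hessian variable})$ is in hand, the statement follows immediately from Lemma~\ref{lemma24}.
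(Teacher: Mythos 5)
Your proposal is correct and follows essentially the same route as the paper: the block $V\otimes S^2(V)$ is isolated by the symmetry/parity of the Jacobian functional, the Dirac factor is collapsed via Lemma \ref{lemma13} to turn the first Hermite factor into $-\Id_V$, and the remaining expectation is recognized as $f_2(0,\cdot)$ from Lemma \ref{lemma24}. The only difference is cosmetic: you spell out the bidegree argument for the vanishing of the cross-blocks, which the paper dispatches in one sentence (and you inherit the same sign bookkeeping between the displayed chain, which ends in $-\Id_V\otimes f_2(0,\cdot)$, and the stated formula, exactly as the paper does).
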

\begin{proof}
The fact that the subspace $U\otimes (U\otimes V)$ directly follows from the symmetry of the Jacobian map. The restriction of $f_4^r$ to this subspace is equal to
\begin{align*}
f_4^r &= \E\left[f(X(0))\left(\nabla F(0)\nabla F(0)^T-\Id_V\right)\otimes\left(TF(0)\otimes TF(0) - \Id_V\otimes\Id_V\right)\right]\\
&= -\Id_V\otimes \E\left[f(X(0))\otimes\left(TF(0)\otimes TF(0) - \Id_V\otimes\Id_V\right)\right]\\
&= -\Id_V\otimes f_2(0,\,\cdot\,).
\end{align*}
The conclusion follows from the expression of $f_2$ given by Lemma \ref{lemma24}.
 \end{proof}
\subsection{Self-convolution of hypersurface measures}
\label{sec43}
The goal of this subsection is to derive formulas for the convolution of continuous measures supported on the sphere. These computations could certainly be approached from a probabilistic perspective, specifically by considering the distribution of a \textit{uniform} random walk, for which precise results concerning the probability of returning close to zero after a small number of steps exist; see, for instance, \cite{Bor12}. We define the convolution symbol $\str$ as
\[\mu\str\nu = \mu\str(\nu(-\,\cdot)).\]
If the measure $\nu$ is symmetric w.r.t. the origin, it coincides with the usual convolution operation. We say that a function $f$ defined on the ball $B(0,R)$ in $\R^d$ is \textit{polar-continuous} if the function $\tilde{f}:(r,\theta)\mapsto f(r\theta)$ is continuous on $[0,R]\times \Ss^{d-1}$. The typical example of a polar continuous (but not continuous) function is the mapping $x\mapsto x/\|x\|$.
\begin{lemma}
\label{lemma16}
Let $\sigma$ be the uniform probability measure on the sphere of radius $1$. Then $\sigma*\sigma$ has a density w.r.t the Lebesgue measure given by
\[\sigma*\sigma(x) = C_d \frac{1}{\|x\|}(4-\|x\|^2)^{\frac{d-3}{2}}\one_{B(0,2)}(x),\]
for some normalization constant $C_d$. If $\rho_1,\rho_2$ are continuous functions on the sphere, then the measure $(\rho_1\sigma)\str(\rho_2\sigma)$ has a polar-continuous density w.r.t the measure $\sigma*\sigma$. Precisely,
 if $\mu_\theta$ designs the probability measure of the $d-2$ dimensional sphere of radius $1$ and normal vector $\theta$, then
\[\lim_{(r,\theta)\rightarrow (0,\theta_0)} \frac{\dd (\rho_1\sigma)\str(\rho_2\sigma)}{\dd \sigma*\sigma}(r,\theta) = \int \rho_1(y)\rho_2(y)\dd\mu_{\theta_0}(y),\]
\end{lemma}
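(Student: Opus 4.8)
The strategy is to realise $(\rho_1\sigma)\str(\rho_2\sigma)$ as a pushforward and apply the coarea formula. Since $\sigma$ is symmetric, $(\rho_2\sigma)(-\,\cdot)=\rho_2(-\,\cdot)\,\sigma$, and the substitution $v=-w$ shows that $(\rho_1\sigma)\str(\rho_2\sigma)$ is the pushforward of the measure $\rho_1(u)\rho_2(v)\,\dd\sigma(u)\,\dd\sigma(v)$ on the product manifold $\Ss^{d-1}\times\Ss^{d-1}$ under the difference map $\Phi:(u,v)\mapsto u-v$; in particular $\rho_1=\rho_2=1$ recovers $\sigma*\sigma$. Writing $\omega_m$ for the $\mathcal{H}^m$-measure of the unit $m$-sphere, so $\sigma=\omega_{d-1}^{-1}\mathcal{H}^{d-1}$, the coarea formula gives, for $0<\|x\|<2$, a Lebesgue density at $x$ equal to
\[
\frac{1}{\omega_{d-1}^2}\int_{\Phi^{-1}(x)}\frac{\rho_1(u)\rho_2(v)}{J_\Phi(u,v)}\,\dd\mathcal{H}^{d-2}(u,v),\qquad J_\Phi:=\sqrt{\det\!\big(D\Phi\,(D\Phi)^{T}\big)}.
\]
A short linear-algebra computation gives $D\Phi\,(D\Phi)^{T}=2\,\Id-uu^{T}-vv^{T}$ on $\R^d$: it is $2\,\Id$ on $\Span(u,v)^{\perp}$ and has determinant $1-\langle u,v\rangle^{2}$ on $\Span(u,v)$, whence $J_\Phi(u,v)=2^{(d-2)/2}\sqrt{1-\langle u,v\rangle^{2}}$. (The critical set $\{u=\pm v\}$ maps into $\{0\}\cup\{\|x\|=2\}$, which is Lebesgue-null, so the formula is valid on $0<\|x\|<2$.)

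Next I identify the fibre. The constraints $u,v\in\Ss^{d-1}$, $u-v=x$ force $v=u-x$ and $\langle u,x\rangle=\tfrac12\|x\|^2$, so $\Phi^{-1}(x)$ is the image under $\iota:u\mapsto(u,u-x)$ of the $(d-2)$-sphere
\[
F_x=\enstq{u\in\Ss^{d-1}}{\langle u,x\rangle=\tfrac12\|x\|^2},
\]
a sphere of radius $\tfrac12\sqrt{4-\|x\|^2}$, centred at $x/2$, in the hyperplane orthogonal to $x$. On $F_x$ the quantity $\langle u,v\rangle=1-\tfrac12\|x\|^2$ is constant, so $J_\Phi$ is constant there and, using $1-\langle u,v\rangle^2=\tfrac14\|x\|^2(4-\|x\|^2)$, equals $2^{(d-4)/2}\|x\|\sqrt{4-\|x\|^2}$; moreover $\iota$ multiplies lengths by $\sqrt2$, so $\dd\mathcal{H}^{d-2}$ on $\Phi^{-1}(x)$ pulls back to $2^{(d-2)/2}\dd\mathcal{H}^{d-2}$ on $F_x$. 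Substituting, the Lebesgue density of $(\rho_1\sigma)\str(\rho_2\sigma)$ at $x$ is
\[
\frac{2}{\omega_{d-1}^2\,\|x\|\sqrt{4-\|x\|^2}}\int_{F_x}\rho_1(u)\,\rho_2(u-x)\,\dd\mathcal{H}^{d-2}(u).
\]
Taking $\rho_1=\rho_2=1$ and $\mathcal{H}^{d-2}(F_x)=\omega_{d-2}\,2^{-(d-2)}(4-\|x\|^2)^{(d-2)/2}$ gives the first assertion with $C_d=\omega_{d-2}/(2^{d-3}\omega_{d-1}^2)$ (the total mass is $1$ automatically since $\sigma$ is a probability measure). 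Dividing the two densities, every $\|x\|$- and $(4-\|x\|^2)$-factor cancels against $\mathcal{H}^{d-2}(F_x)$, and the density of $(\rho_1\sigma)\str(\rho_2\sigma)$ with respect to $\sigma*\sigma$ is the spherical average
\[
g(x)=\frac{1}{\mathcal{H}^{d-2}(F_x)}\int_{F_x}\rho_1(u)\,\rho_2(u-x)\,\dd\mathcal{H}^{d-2}(u),\qquad 0<\|x\|<2,
\]
which is bounded by $\|\rho_1\|_\infty\|\rho_2\|_\infty$.

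It remains to prove polar-continuity near $0$ and evaluate the limit. Fix $\theta_0\in\Ss^{d-1}$ and choose a continuous orthonormal frame $(e_i(\theta))_{i=1}^{d-1}$ of $\theta^{\perp}$ for $\theta$ near $\theta_0$; then for small $r$,
\[
F_{r\theta}=\enstq{\tfrac r2\,\theta+\sqrt{1-\tfrac{r^2}{4}}\;\textstyle\sum_{i}z_i\,e_i(\theta)}{z\in\Ss^{d-2}},
\]
and, the radius factor cancelling in the average,
\[
g(r\theta)=\frac{1}{\omega_{d-2}}\int_{\Ss^{d-2}}\rho_1\!\Big(\tfrac r2\theta+\sqrt{1-\tfrac{r^2}{4}}\textstyle\sum_i z_ie_i(\theta)\Big)\,\rho_2\!\Big(-\tfrac r2\theta+\sqrt{1-\tfrac{r^2}{4}}\textstyle\sum_i z_ie_i(\theta)\Big)\,\dd\mathcal{H}^{d-2}(z).
\]
The integrand is jointly continuous in $(r,\theta,z)$ on a neighbourhood of $\{0\}\times\{\theta_0\}\times\Ss^{d-2}$ and uniformly bounded, so $g$ extends continuously across $r=0$ with
\[
\lim_{(r,\theta)\to(0,\theta_0)}g(r\theta)=\frac{1}{\omega_{d-2}}\int_{\Ss^{d-2}}\rho_1\!\Big(\textstyle\sum_i z_ie_i(\theta_0)\Big)\rho_2\!\Big(\textstyle\sum_i z_ie_i(\theta_0)\Big)\dd\mathcal{H}^{d-2}(z)=\int\rho_1(y)\rho_2(y)\,\dd\mu_{\theta_0}(y),
\]
since $z\mapsto\sum_i z_ie_i(\theta_0)$ is an isometric parametrisation of the great subsphere $\theta_0^{\perp}\cap\Ss^{d-1}$, on which $\mu_{\theta_0}=\omega_{d-2}^{-1}\mathcal{H}^{d-2}$; continuity of $g$ on $0<\|x\|<2$ follows from the same formula. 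The main technical points are the normal-Jacobian computation together with the bookkeeping of $\omega_{d-1},\omega_{d-2}$ and the powers of $2$, and the uniform-in-$\theta$ control of the moving fibre $F_{r\theta}$ as $r\to0$; the conceptual shortcut that renders the limit transparent is the identity exhibiting $g$ as an unweighted spherical average over $F_x$.
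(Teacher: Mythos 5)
Your proof is correct, and the bookkeeping checks out: the normal Jacobian $2^{(d-2)/2}\sqrt{1-\langle u,v\rangle^2}$, the identification of the fibre with the $(d-2)$-sphere $F_x$ of radius $\tfrac12\sqrt{4-\|x\|^2}$ centred at $x/2$, the $\sqrt 2$-scaling of the parametrisation $u\mapsto(u,u-x)$, and the resulting constant $C_d=\omega_{d-2}/(2^{d-3}\omega_{d-1}^2)$ are all consistent (one can sanity-check that the total mass is $1$ for $d=2,3$). The route differs from the paper's in execution though not in underlying geometry: the paper obtains the density at a point as an integral over the transversal intersection $M\cap(N+x)$ weighted by $1/|\sin\theta_x|$, where $\theta_x$ is the angle between the normals, by an $\varepsilon$-thickening argument that is only sketched; you instead realise $(\rho_1\sigma)\str(\rho_2\sigma)$ as the pushforward of $\rho_1\otimes\rho_2\,\dd\sigma\otimes\dd\sigma$ under the difference map and invoke the coarea formula, which packages the same transversality factor into the normal Jacobian (indeed $1-\langle u,v\rangle^2=\sin^2\theta$ for the angle between the normals $u$ and $v$ of the two shifted spheres). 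What your version buys is a fully rigorous and self-contained derivation with explicit constants, and a clean exhibition of the Radon--Nikodym derivative as the \emph{unweighted} average of $\rho_1(u)\rho_2(u-x)$ over $F_x$, from which polar-continuity and the limit $\int\rho_1\rho_2\,\dd\mu_{\theta_0}$ follow by uniform continuity of the moving parametrisation of $F_{r\theta}$; the paper's version, by working with general transverse hypersurfaces, makes more visible why the argument extends to positively curved hypersurfaces other than the sphere (cf.\ Remark \ref{rem2}), a generality your computation does not aim for but could absorb with minor changes.
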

\begin{proof}
Let $M$ and $N$ two compact hypersurface of $\R^d$ with respective Lebesgue measure $\sigma_M$ and $\sigma_N$. Let $\rho_M$ and $\rho_N$ be two smooth functions on $M$ and $N$, respectively. We assume that $M$ and $N$ are transverse, meaning that for all $x\in M\cap N$, the unit normal vectors of the two hypersurfaces at point $x$ form an angle $\theta_x\in ]0,\pi[$. By compactness, there is $\eta>0$ such that for all $u\in B(0,\eta)$, $M+u$ is also transverse to $N$. Moreover, $M\cap N$ is a submanifold of dimension $d-2$, and by considering a $\varepsilon$-neighborhood of $M$ and $N$ respectively, one can observe, taking the limit as $\varepsilon$ goes to $0$, that $(\rho_M\sigma_M)\str(\rho_N\sigma_N)$ has a density in $B(0,\eta)$ with respect to the Lebesgue measure on $\R^d$ such that
\[(\rho_M\sigma_M)\str(\rho_N\sigma_N)(0) = \int_{M\cap N} \frac{\rho_M(x)\rho_N(x)}{|\sin(\theta_x)|}\dd \sigma_{N\cap M}(x).\numberthis\label{eq:12}\]
A bit of geometry shows that two unit spheres whose centers are shifted by a vector $x$ with norm $r\in ]0,2[$ intersects everywhere at an angle $\theta_r$ such that
\[\sin\left(\frac{\theta_r}{2}\right) = \frac{r}{2},\qsothat \sin(\theta_r) = 2r\sqrt{1-\left(\frac{r}{2}\right)^2}.\]
The intersection of the two spheres is a $d-2$ dimensional sphere $B_x$ of radius $\sqrt{1-\left(\frac{r}{2}\right)^2}$. Gathering this fact and Formula \eqref{eq:12}, we deduce that the convolution $\sigma*\sigma$ has a density w.r.t. Lebesgue measure given by
\[\sigma*\sigma(x) = C_d \frac{1}{\|x\|}(4-\|x\|^2)^{\frac{d-3}{2}}\one_{B(0,2)}(x),\]
and more generally
\[(\rho_1\sigma)\str(\rho_2\sigma)(x) = \left(\int_{B_x}\rho_1(y)\rho_2(y)\dd \mu_x(y)\right)\sigma*\sigma(x),\]
where $\mu_x$ is the uniform probability measure on the $d-2$ dimensional sphere $B_x$. Passing in polar coordinates $x = (r,\theta)$, the sphere $B_x$ converges, as $r$ goes to $0$ and $\theta$ goes to $\theta_0$, to the $d-2$ sphere $B_{\theta_0}$ of radius $1$ and normal vector $\theta_0$, and
\[\lim_{(r,\theta)\rightarrow (0,\theta_0)} \int_{B_x}\rho_1(y)\rho_2(y)\dd \mu_x(y) = \int_{B_{\theta_0}}\rho_1(y)\rho_2(y)\dd\mu_{\theta_0}(y).\]
\end{proof}
\begin{remark}
\label{rem2}
The key assumption regarding the sphere is that it is everywhere \textit{positively curved}. The curvature implicitly appears as the differential of the normal vector at the start of the proof. A similar degeneracy in the function $\sigma*\sigma$ could also be observed for more general positively curved manifolds $M$, near the point $x = 0$ and along the boundary of the set of points $x$ such that $M \cap (M + x) \neq \emptyset$. By linking the regularity of a function with the decay of its Fourier transform, this provides an alternative heuristic explanation —beyond the usual stationary phase approach— of why the Fourier transform of a set with positively bounded curvature exhibits favorable decay properties (see the definition of test functions in Section \ref{sec11}).
\end{remark}
\begin{lemma}
\label{lemma17}
Let $\sigma$ be the uniform probability measure on the sphere of radius $1$. Then the measure $\sigma^{*4}$ has a density w.r.t the Lebesgue measure such that for some positive constant $C_d$,
\[\sigma^{*4}(x) \simeq \left\lbrace\begin{array}{llll}
C_2|\log(\|x\|)|\;\text{ if } d=2,\\
C_d\;\qquad\quad\quad\;\text{ if } d\geq 3,\\
\end{array}
\right.\]
In dimension $2$, let $\omega_1$ and $\omega_2$ be polar continuous functions on the closed ball $B(0,2)$. Then there are \textbf{positive} constants $C$, $C'$ such that
\[\lim_{x\rightarrow 0} \frac{1}{|\log(\|x\|)|}(\omega_1\sigma^{*2})\str (\omega_2\sigma^{*2})(x) = C\int_{\Ss^1}\omega_1(0,\theta)\omega_2(0,\theta)\dd\theta + C'\int_{\|y\|=2}\omega_1(y)\omega_2(y)\dd y.\]
\end{lemma}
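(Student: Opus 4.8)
The plan is to reduce everything to the explicit density of $\sigma^{*2}=\sigma*\sigma$ furnished by Lemma~\ref{lemma16}. Denote by $\kappa_d$ the constant there and write $g_d(y)=\kappa_d\,\|y\|^{-1}(4-\|y\|^2)^{\frac{d-3}{2}}\one_{B(0,2)}(y)$ for the Lebesgue density of $\sigma^{*2}$. Since $g_d\in L^1(\R^d)$, the measure $\sigma^{*4}=\sigma^{*2}*\sigma^{*2}$ has density $g_d*g_d$, and more generally, using that $\sigma^{*2}$ is symmetric, $(\omega_1\sigma^{*2})\str(\omega_2\sigma^{*2})$ has Lebesgue density
\[h(x)=\int_{\R^d}\omega_1(y)\,g_d(y)\,\omega_2(y-x)\,g_d(y-x)\,\dd y,\]
the unweighted statement being the case $\omega_1=\omega_2=1$. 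For $d\ge 3$ the two singular behaviours of $g_d$ — the $\|y\|^{-1}$ at the origin and the $(2-\|y\|)^{\frac{d-3}{2}}$ along $\{\|y\|=2\}$ — are both square integrable, so $g_d\in L^2(\R^d)$, hence $h=g_d*g_d$ is continuous with $h(0)=\|g_d\|_2^2>0$; this settles the case $d\ge 3$ with $C_d:=\|g_d\|_2^2$.

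All the work lies in dimension $d=2$, where $g_2(y)\sim\frac{\kappa_2}{2\|y\|}$ as $y\to 0$, $g_2(y)\sim\frac{\kappa_2}{4}(2-\|y\|)^{-1/2}$ as $\|y\|\to 2^-$, and $g_2$ is smooth and bounded on every shell $\{\varepsilon\le\|y\|\le 2-\varepsilon\}$. Fix a small $\delta>0$ and split $h=h_0+h_\infty+h_{\mathrm{rest}}$ according to $\|y\|<\delta$, $2-\delta<\|y\|<2$, or $\delta\le\|y\|\le 2-\delta$. On the middle shell, once $\|x\|<\delta/2$, both $y$ and $y-x$ stay away from $0$ and from $\{\|y\|=2\}$, so the integrand is bounded on a set of finite measure and $h_{\mathrm{rest}}(x)=O_\delta(1)$, negligible against $|\log\|x\||$. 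The two singular loci of $g_2$ thus control the asymptotics, and they act independently: a neighbourhood of $0$ and a neighbourhood of $\{\|y\|=2\}$ cannot overlap after a small translation, which is exactly why the limit splits into a term carried by the origin and a term carried by the sphere of radius $2$. For the origin piece, on $\{\|y\|<\delta\}$ one has $g_2(y)g_2(y-x)=\frac{\kappa_2^2}{4}\,\frac{1+O(\delta^2)}{\|y\|\,\|y-x\|}$; I would split once more at $\|y\|=A\|x\|$ for a large parameter $A$. The inner part $\|y\|<A\|x\|$, after the scaling $y=\|x\|z$, is at most $\|\omega_1\|_\infty\|\omega_2\|_\infty\int_{\|z\|<A}\frac{\dd z}{\|z\|\,\|z-e\|}=O_A(1)$ for a unit vector $e$. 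On $A\|x\|\le\|y\|<\delta$ one has $\|y-x\|=\|y\|(1+O(1/A))$ and $\widehat{y-x}=\widehat y+O(1/A)$, so $\omega_2(y-x)=\omega_2(y)+O(\varpi(1/A+\|x\|))$ by polar-continuity of $\omega_2$ on the compact $\overline{B(0,2)}$ ($\varpi$ its modulus of continuity), and in polar coordinates
\[\int_{A\|x\|\le\|y\|<\delta}\frac{\omega_1(y)\,\omega_2(y)}{\|y\|^2}\,\dd y=\int_{A\|x\|}^{\delta}\frac{\Phi(r)}{r}\,\dd r,\qquad \Phi(r)=\int_{\Ss^1}\tilde\omega_1(r,\theta)\,\tilde\omega_2(r,\theta)\,\dd\theta,\]
which equals $\Phi(0)\,|\log\|x\||+O\!\left(\bigl(\textstyle\sup_{r\le\delta}|\Phi(r)-\Phi(0)|\bigr)|\log\|x\||\right)+O_{A,\delta}(1)$. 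Dividing by $|\log\|x\||$ and letting $\|x\|\to 0$ first, then $A\to+\infty$ and $\delta\to 0$ so the $\omega_2$- and $\Phi$-continuity errors vanish by uniform continuity, gives $h_0(x)/|\log\|x\||\to\frac{\kappa_2^2}{4}\int_{\Ss^1}\omega_1(0,\theta)\,\omega_2(0,\theta)\,\dd\theta$.

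For the sphere piece I would use polar coordinates $y=r\theta$ with $s=2-r\in(0,\delta)$, the elementary expansion $2-\|y-x\|=s+\langle\theta,x\rangle+O((s+\|x\|)^2)$, and the fact that $g_2(y-x)$ is nonzero precisely when this quantity is positive. Then $g_2(y)g_2(y-x)=\frac{\kappa_2^2}{16}(1+O(\delta))\bigl(s\,(s+\langle\theta,x\rangle)_+\bigr)^{-1/2}$ up to the quadratic correction, and the $s$-integral computes explicitly, $\int_0^{\delta}\bigl(s(s+b)\bigr)^{-1/2}\dd s=2\log(\sqrt\delta+\sqrt{\delta+b})-2\log\sqrt{|b|}=\log\frac{4\delta}{|b|}+o(1)$ with $b=\langle\theta,x\rangle$ (and the analogous variant for $b<0$). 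The quadratic correction, and the exceptional directions with $|\langle\theta,x\rangle|\lesssim\|x\|^2$ (an arc of length $O(\|x\|)$, each contributing $O(|\log\|x\||)$), add only $O(\|x\|\,|\log\|x\||)=o(1)$. Integrating in $\theta$, using $\log\frac{1}{|\langle\theta,x\rangle|}=|\log\|x\||+\log\frac{1}{|\langle\theta,\widehat x\rangle|}$ with $\theta\mapsto\log\frac{1}{|\langle\theta,\widehat x\rangle|}$ integrable on $\Ss^1$, together with $\int_{\Ss^1}F(2\theta)\,\dd\theta=\tfrac12\int_{\|y\|=2}F\,\dd y$, and again sending $\delta\to 0$, yields $h_\infty(x)/|\log\|x\||\to\frac{\kappa_2^2}{16}\int_{\|y\|=2}\omega_1(y)\,\omega_2(y)\,\dd y$. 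Summing the three pieces gives the claim with $C=\kappa_2^2/4>0$ and $C'=\kappa_2^2/16>0$, and the unweighted statement with $C_2=2\pi C+4\pi C'>0$. The main obstacle is the bookkeeping of the error terms rather than any single estimate: the corrections coming from the moduli of continuity of $\omega_1,\omega_2$ are a priori of the same order $|\log\|x\||$ as the leading term, so the limits in $\|x\|$, $A$ and $\delta$ must be taken in that order for them to be absorbed; and near the sphere one must control the degeneracy of the expansion $2-\|y-x\|\approx s+\langle\theta,x\rangle$ for directions $\theta$ almost orthogonal to $x$, which is handled by isolating that exceptional arc and checking it is too thin to contribute.
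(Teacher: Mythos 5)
Your proposal is correct and follows essentially the same route as the paper: reduce to the explicit density of $\sigma*\sigma$ from Lemma \ref{lemma16}, observe square-integrability for $d\geq 3$, and for $d=2$ extract the logarithmic divergence from the two singular loci $y=0$ and $\|y\|=2$. Your treatment of the contribution near $\|y\|=2$ (via the expansion $2-\|y-x\|=s+\langle\theta,x\rangle+O((s+\|x\|)^2)$ and the exceptional arc of nearly orthogonal directions) is in fact more detailed than the paper's, which only asserts that a similar argument applies there.
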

\begin{proof}
The previous Lemma \ref{lemma16} implies for $d\geq 3$ that the measure $\sigma*\sigma$ is square integrable. In particular, the measure $\sigma^{*4}$ has a continuous density, and $\sigma^{*4}(0) = \|\sigma*\sigma\|_2^2>0$. When $d=2$, the function $\sigma*\sigma$ is not square-integrable. One has
\[(\omega_1\sigma)\str(\omega_2\sigma)(x) = C\int_{\R^d}\frac{\one_{B(0,2)}(x+y)\one_{B(0,2)}(y)\omega_1(x+y)\omega_2(y)}{\|y\|\|x+y\|\sqrt{4-\|y\|^2}\sqrt{4-\|x+y\|^2}}\dd y.\]
We must understand the singular loci : near the point $y=0$, and near the sphere $\|y\|=2$. Let $x=(\varepsilon,\theta)$. Near $y=0$ we make the substitution $y = \varepsilon u$. 
The singularity comes from the term $1/\|u\|\|\theta+ u\|$ that integrates as a $|\log(\varepsilon)|$ on $B(0,2/\varepsilon)$. One can then isolate the mass of the singularity for instance on the domain 
\[D_\varepsilon = B\left(0,\frac{2}{\varepsilon|\log(\varepsilon)|}\right)\setminus B(0,|\log(\varepsilon)|).\]
On this domain one has $\theta+u\simeq u$ and $\varepsilon u \rightarrow 0$, so that as $\varepsilon$ goes to $0$,
\[(\rho_1\sigma)\str(\rho_2\sigma)(x) \simeq C\int_{D_\varepsilon}\frac{\omega_1(\varepsilon u)\omega_2(\varepsilon u)}{4\|u\|^2}\dd y\simeq  C|\log(\varepsilon)|\int_{\Ss^1}\omega_1(0,\theta)\omega_2(0,\theta)\dd\theta.\]
By a similar argument one can obtain the same logarithmic scale near the sphere $\|y\|=2$.
\end{proof}
\section{Proofs of the main theorems}
\label{sec50}
In this section, we prove the main theorem stated in introduction. The first section is devoted to the proof of the general theorems, which are relatively straightforward consequences of the lemmas stated in Section \ref{sec30}. The second subsection exposes the proof related to the nodal measure, which follow from the material developed in Section \ref{sec40}. In the following, we will use the notation developed in the previous subsection.
\subsection{Proofs of the general theorems}
\label{sec51}
In this section, we prove the general theorems stated in introduction, namely \ref{thm1}, Corollaries \ref{cor1} and \ref{cor2} and Theorem \ref{thm4}. For $u,v\in V$, one has from Equation \eqref{eq:13}
\[\Var(f(X(u)),f(X(v))) = \sum_{q=1}^{+\infty} \E[\pi_q(f)(X(u))\pi_q(f)(X(v))] =  \sum_{q=1}^{+\infty} \frac{1}{q!}(\Omega(u-v))^{\otimes q}(f_q,f_q).\]
One deduce that
\begin{align*}
\Var(Z_\lambda(\phi))&= \sum_{q=1}^{+\infty} \frac{1}{\lambda^d}\int_{V^2}\phi\left(\frac{u}{\lambda}\right)\phi\left(\frac{v}{\lambda}\right)\E[\pi_q(f)(X(u))\pi_q(f)(X(v))]\dd u\dd v\\
& = \sum_{q=1}^{+\infty} \Var(Z_\lambda^{(q)}(\phi)),
\end{align*}
proving the first part of the lemma. As for the second part, the stationarity implies the relation
\[\Var(Z_\lambda^{(q)}(\phi)) = \frac{1}{q!}\int_V (\Omega(u-v))^{\otimes q}(f_q,f_q)\left(\frac{1}{\lambda^d}\phi\left(\frac{\,.\,}{\lambda}\right)*\phi\left(\frac{-\,.\,}{\lambda}\right)\right)(v)\dd v\]
Using Plancherel theorem, one obtain
\begin{align*}
\Var(Z_\lambda^{(q)}(\phi)) = \frac{1}{q!}\int_{V^*} \lambda^d |\hat{\phi}(\lambda\xi)|^2\dd\widehat{\Omega^{\otimes q}(f_q,f_q)}(\xi) = \frac{1}{q!}\int_{V^*}\gamma_\lambda(\xi)\,\dd \mu^{*q}(f_q)(\xi).
\end{align*}
The proof of Corollary \ref{cor1} is a direct application of Lemma \ref{lemma7} applied to the measure $\mu^{*q}(f_q)$. As for the next Corollary \ref{cor2}, the integrability assumption on the density $\Sigma$ implies that $\Sigma^{*q}$ is a bounded and continuous function. So that
\[\lim_{\lambda\rightarrow +\infty} \Var(Z_\lambda^{(q)}(\phi)) = \Sigma^{*q}(f_q)(0).\]
The conclusion follows from the convolution formula
\[\Sigma^{*q}(0) = \int_{\xi_1+\ldots+\xi_q=0} \bigotimes_{k=1}^q \Sigma(\xi_k)\dd\xi.\]
When $q=1$ and $q=2$, these expressions reduces to the formulas  stated.
\subsection{Proofs of the variance of the nodal measure}
\label{sec52}
The first section is devoted to the proof of Theorem \ref{thm6}. The second subsection is devoted to the proof Theorem \ref{thm2} and Theorem \ref{thm3}. The last subsection is devoted to the proof of Theorem \ref{thm5}.
\subsubsection{Proof of Theorem \ref{thm6}}
The proof of the asymptotic normality exactly follows the same strategy as \cite{Nua20}, and is based on the Fourth moment theorem \cite{Nua05}. The argument for the cancellation of the contractions is completely identical. The only difference is that in our setting, $f$ is a tempered distribution and one cannot use directly the usual Arcones inequality \cite{Arc94} to upper bound the variance of $Z_\lambda(\phi)$, and we detail here the proof for the variance bound. We use the notations of Section \ref{sec23}. By assumption on the field, one has $\Omega(v)\in E^*(\eta)$, where $\eta$ is the Gaussian measure on $W$. We define, for $v\neq 0$ the quantity 
\[\E[f(0)f(v)] = (f\otimes f)(\rho_\Omega).\]
It follows from \cite{Gas24} applied with $p=2$, or directly from the standard divided difference trick presented in the introduction of that paper, that there is a constant $C$ such that for $v\in B(0,1)$
\[\E[f(0)f(v)]\leq C.\numberthis\label{eq:15}\]
By Kac--Rice formula
\begin{align*}
\Var(Z_\lambda(\phi)) &= \int_{V\setminus \{0\}}\frac{1}{\lambda^d}\phi*\phi\left(\frac{v}{\lambda}\right)\E\left[f(X(0))f(X(v))\right]\dd v + \E[Z_\lambda(\phi)]^2\one_{k=d}
\end{align*}
Since $\Omega$ converges to $0$ as $\|v\|\rightarrow +\infty$, by compactness one can find a compact $A$ of $E^*(\eta)$ such that for all $v\in V\setminus B(0,1)$, one has $\Omega(v)\in A$. Then by Equation \eqref{eq:14}, there is a constant $C_q$ such that for $v\in V\setminus B(0,1)$,
\[\left|\E\left[f(X(0))f(X(v))\right] - \sum_{k=0}^{q-1}\E\left[f_k(X(0))f_k(X(v))\right]\right|\leq C_q\|\Omega\|^q.\numberthis\label{eq:16}\]
The function $\frac{1}{\lambda^d}\phi*\phi(\cdot/\lambda)$ uniformly converges to the constant function $1$, and the convergence of the variance of $Z_\lambda^{(q+)}(\phi)$ follows by dominated convergence, gathering Equation \eqref{eq:15} and \eqref{eq:16}.
\subsubsection{Proof of Theorem \ref{thm7}}
Using the independence between $Y(0)$ and $\nabla Y(0)$ and the symmetry of the Jacobian map, the first chaotic projection for the nodal volume has the expression
\[f_1 = \alpha_u (u, 0).\]
Using Corollary \ref{cor2} with $q=1$ we deduce that
\[\lim_{\lambda\rightarrow +\infty}\Var(Z_\lambda^{u,(1)}(\phi)) = \alpha_u\Sigma(0)(u,u).\]
If $r$ is integrable, the density $\Sigma$ is well-defined and continuous. The CLT for $Z_\lambda$ follows from Theorem \ref{thm6} applied to $q=1$, and the lower bound for the variance given by the variance of first chaotic component.
\subsubsection{Proof of Theorem \ref{thm2} and Theorem \ref{thm3}}
Let $\Sigma = \widehat{\Omega}$ be the spectral density of the random process $X = (Y,\nabla Y)$ that takes values in the space $W = U\times (U\otimes V)\simeq U\otimes (\R\times V)$. It follows from Corollary \ref{thm2} that the limit expression for the variance is given by the quantity
\[\lim_{\lambda\rightarrow +\infty}\Var(Z_\lambda^{(2)}(\phi)) = \frac{1}{2}\int_{V^*}\Tr(f_2\Sigma(\xi)f_2\Sigma(\xi))\dd \xi.\]

\noindent \underline{$\bullet$ Assume the hypothesis (H1)}: \jump
Using the definition of the trace, or equivalently the mixed product property, and Lemma \ref{lemma2} applied to the rank one matrix $(1,2i\pi \xi)\otimes (1,-2i\pi \xi)$, one gets thanks to Lemma \ref{lemma15} and Lemma \ref{lemma20}
\begin{align*}
\Tr\left(f_2\Sigma(\xi)f_2\Sigma(\xi)\right) &= \Tr(\sigma(\xi)^2)\left((1,2i\pi\xi)^T\begin{pmatrix}[c|c]
-\alpha &  0 \\
\hline
0 & \frac{\alpha}{d}\,\Id
\end{pmatrix}(1,-2i\pi\xi)\right)^2\\
&= \alpha^2\left(-1+\frac{4\pi^2}{d}\,\|\xi\|^2\right)^2\|\psi(\xi)\|_2^2. \numberthis\label{eq:04}
\end{align*}
Since the function $\psi$ is in $L^2$ and is non identically zero, it cannot be supported on the zero set of a non-zero quadratic equation. Therefore, this last quantity is not the zero function and 
\[\lim_{\lambda\rightarrow +\infty}\Var(Z_\lambda^{(2)}(\phi)) >0.\]
Note that the proof didn't make use of Theorem \ref{thm4} because the computation are explicit. If we remove the hypothesis of square integrability on $\mu$, then let $\dd\psi = \varsigma(\xi)\dd\nu$ be a representation of $\sigma$. We deduce the following representation for the spectral measure $\mu$
\[\dd\mu = \Theta(\xi)\dd\nu,\qwith \Theta(\xi) = \varsigma(\xi)\,\otimes \left[(1,2i\pi \xi)\otimes (1,-2i\pi \xi)\right]\dd \nu.\]
In the case $k=1$ the isotropic cone of $f_2$ and the image of $\mu$ are given explicitly from Lemma \ref{lemma15} by
\[C(f_2) = \enstq{(1,2i\pi\xi)}{\|\xi\|=d}\C \quand \Imm(\mu) = \enstq{(1,2i\pi\xi)}{\xi\in \supp \sigma}\C,\]
and $\Imm(\mu)\subset C(f_2)$ if and only if the measure $\sigma$ is supported on the sphere of radius $\sqrt{d}/2\pi$, i.e. if $Y$ is a random wave. The conclusion of Theorem \ref{thm3} directly follows from the two points of Theorem \ref{thm4}, applied to the sphere as a smooth compact hypersurface of $V^*$. In the case $k>1$, the computation of the isotropic cone and the image are not convenient and we follow a more direct approach. The same computation as in Equation \eqref{eq:04} shows that
\[\Tr\left(f_2\varsigma(\xi)f_2\varsigma(\xi)\right) = \alpha^2\left(-1+\frac{4\pi^2}{d}\,\|\xi\|^2\right)^2\|\varsigma(\xi)\|_2^2. \numberthis\label{eq:05}\]
If $Y$ is not a random wave then this quantity is positive on a set of positive $\nu$-measure, and the conclusion follows from the proof Lemma \ref{lemma10}. Conversely, if $Y$ is a regular random wave then this quantity cancels for all $\xi$ in the support of $\nu$ and the conclusion of Theorem \ref{thm3} follows from the proof of Lemma \ref{lemma11}, and in particular from the second point.\jump

\noindent \underline{$\bullet$ Assume the hypothesis (H2)}: \jump
It follows from Lemma \ref{lemma21} and Lemma \ref{lemma22} that
\[\Tr\left(f_2\Sigma(\xi)f_2\Sigma(\xi)\right) \geq \Tr\left(f_{2,1}\Sigma_1(\xi)f_{2,1}\Sigma_1(\xi)\right) \geq \alpha^2 \left(-1+\frac{4\pi^2}{d}\,\xi^TM\xi\right)^2|\psi_1(\xi)|^2,\]
and the conclusion follows in a similar fashion as in the previous case.\jump

\noindent \underline{$\bullet$ Assume the hypothesis (H3)}: \jump

Without isotropy assumption, \ref{lemma25} and Lemma \ref{lemma24} implies that
\[\Tr\left(f_2\tilde{\Sigma}(\xi)f_2\tilde{\Sigma}(\xi)\right) = \omega(\xi)^2\alpha^2\left(-4\pi^2\|\xi\|^2  + S(\xi)M\overline{S(\xi)}\right)^2.\]
The conclusion follows in a similar fashion as in the previous case. If the field $F$ has an isotropic distribution, then one can explicit this quantity, thanks to Lemma \ref{lemma25}. One has
\begin{align*}
\Tr\left(f_2\tilde{\Sigma}(\xi)f_2\tilde{\Sigma}(\xi)\right) &= \omega(\xi)^2f_2\left(2i\pi\xi, -\frac{4\pi^2}{\sqrt{2\beta}}\left(\xi\otimes\xi-\frac{\gamma}{d}\|\xi\|^2\Id\right)\right)^2\\
 &= \omega(\xi)^2\left[-4\pi^2\|\xi\|^2\alpha + \frac{(4\pi^2)^2\|\xi\|^2}{2\beta}\left[A\left(1-\frac{2\omega}{2}+\frac{\omega^2}{d}\right) + B\left(1-\omega\right)^2\right]\right]^2\\
&= \hat{q}(\xi)^2(4\pi^2\|\xi\|^2\alpha)^2\left[-1 + \frac{4\pi^2\|\xi\|^2}{d}\frac{\beta_0}{\beta}\right]^2\numberthis\label{eq:22}\\
\end{align*}
The conclusion of Theorem \ref{thm3} follows from the same argument as in the case of hypothesis (H1), using the fact that $\beta=\beta_0$ if and only if $F$ is the isotropic random wave model.\jump

\noindent \underline{$\bullet$ Conclusion}: \jump

The Hermite rank of the nodal volume functional is two, and $r\in L^2(V)$. It follows from Theorem \ref{thm6} applied with $q=2$ that the limiting variance of $Z_\lambda$ exists, is finite and under any of the three hypotheses of Lemma \ref{thm2}, is positive thanks to the lower bound
\[\Var(Z_\lambda(\phi))\geq \Var(Z_\lambda^{(2)}(\phi))>0.\]
The CLT for $Z_\lambda(\phi)$ again follows from Theorem \ref{thm6}.\jump

\begin{remark}
\label{rem3}
For $u$-level with $u\neq 0$, one can prove by similar arguments that there is no second chaos cancellation for the isotropic random waves model. Indeed,  Equation \eqref{eq:05} becomes,
\[\Tr\left(f_2\Sigma(\xi)f_2\Sigma(\xi)\right) = \alpha^2\left(\|u\|^2-1 + \frac{4\pi^2}{d}\|\xi\|^2\right)^2 + (k-1)\alpha^2\left(-1 + \frac{4\pi^2}{d}\|\xi\|^2\right)^2 >0.\]
Since the measure $\mu$ is not square integrable, the limiting variance explodes and one can explicitly compute the asymptotics, as it has been done in \cite{Ros16}, and prove the CLT by standard methods.
\end{remark}
\subsubsection{Proof of Theorem \ref{thm5}}

\noindent \underline{$\bullet$ Assume that $Y$ is a symmetric regular random wave}: \jump

Let $\mu$ be the spectral measure of the process $(Y,\nabla Y)$ taking values in the space of Hermitian forms on $W = U\otimes(\R \times V)$. Let $\varsigma$ be the density of the spectral measure $\psi$ with respect to the spherical measure $\sigma$. Since the covariance function of $Y$ is symmetric, the spectral measure $\psi$ has no imaginary component and is thus a symmetric measure (w.r.t $\xi$), so is the density $\varsigma$.\jump 

We want to apply Theorem \ref{thm4} with $q=2$, and conclude that the variance converges to a limit positive constant, with a logarithmic correction in the case $d=2$. From Lemma \ref{lemma15}, the spectral measure $\mu$ has a density w.r.t the spherical measure $\sigma$ given by
\[\dd\mu = \varsigma(\xi)\otimes \left[(1,2i\pi \xi)\otimes(1,-2i\pi \xi)\right]\dd\sigma,\]
Thanks to Lemma \ref{lemma16}, the measure $\mu*\mu$ has a density $\Psi$ w.r.t the measure $\sigma*\sigma$, which is bounded and polar-continuous. The variance of the fourth chaos has the expression
\begin{align*}
\Var(Z_\lambda^{(4)}(\phi)) &= \frac{1}{24}\int_{V}\left(|\hat{\phi}|^2\right)_\lambda(\xi)\,\dd (\mu*\mu)^{*2}(f_4)(\xi)\\
&= \frac{1}{24}\int_{V}\left(|\hat{\phi}|^2\right)_\lambda(\xi)\,\dd \left[\Psi(\sigma^{*2})*\Psi(\sigma^{*2})\right](f_4)(\xi)
\end{align*}
The variance asymptotics is explicit thanks to Lemma \ref{lemma17}. When $d=2$, we have by integration of equivalences,the expression, for positive constants $C,C'$
\[\lim_{\lambda\rightarrow +\infty}\frac{\Var(Z_\lambda^{(4)}(\phi))}{\log(\lambda)} = C\int_{\Sp^1}\Tr(f_4\Psi(0,\theta)f_4\Psi(0,\theta))\dd\theta + C'\int_{\|\xi\|=\frac{\sqrt{d}}{\pi}}\Tr(f_4\Psi(\xi)f_4\Psi(\xi))\dd \xi.\]
When $d\geq 3$, the limit variance is given by Corollary \ref{cor2} and has expression
\[\lim_{\lambda\rightarrow +\infty} \Var(Z_\lambda^{(4)}(\phi)) = C\int_{V}\Tr(f_4\Psi(\xi)f_4\Psi(\xi))\left(\frac{\dd\sigma*\sigma}{\dd\xi}(\xi)\right)^2\dd\xi.\]
In both cases, we can conclude that the limit constant is positive if we are able to show that for $\theta\in \Sp^{d-1}$ the quantity $\Tr(f_4\Psi(0,\theta)f_4\Psi(0,\theta))$ is positive. According to Lemma \ref{lemma16},
\[\Psi(0,\theta) = \int_{\Sp^{d-1}} \left(\varsigma(\xi)\otimes \left[(1,2i\pi \xi)\otimes(1,-2i\pi \xi)\right]\right)^{\otimes 2}\dd \sigma_\theta(\xi),\numberthis\label{eq:20}\]
where $\sigma_\theta$ is the uniform probability measure on the $d-2$-sphere on $V$ with normal vector $\theta$. The map $\Psi(0,\theta)$ can be seen as symmetric bilinear map over $S^2(W^*)$, as the fourth chaotic component $f_4$, see the paragraph above Lemma \ref{lemma26}. In particular, it follows from the symmetry of the integrand in Equation \eqref{eq:20} that the component associated to the subspace $U\otimes (U\otimes V)$ of $S^2(W^*)$ defined in Equation \eqref{eq:19} is stable by $\Psi(0,\theta)$. The restriction of $\Psi(0,\theta)$ to this subspace, denoted $\psi^r(0,\theta)$, is given by
\[\Psi^r(0,\theta) = 4\pi^2\int_{\Sp^{d-1}} \varsigma(\xi)\otimes \varsigma(\xi) \otimes\left(\xi\otimes\xi\right) \dd \sigma_\theta(\xi).\]
We then have , given the expression of $f_4^r$ given by Lemma \ref{lemma26}, and the mixed product property
\begin{align*}
\Tr(f_4\Psi(0,\theta)f_4\Psi(0,\theta)) &\geq \Tr(f_4^r\Psi^r(0,\theta)f_4^r\Psi^r(0,\theta))\\
&\geq \left(\frac{\alpha}{d}\right)^2(4\pi^2)^2\int_{\Sp^{d-1}}\int_{\Sp^{d-1}}\Tr(\varsigma(\xi)\varsigma(\eta))^2\langle \sigma,\eta\rangle^2\dd \sigma_\theta(\xi)\dd \sigma_\theta(\eta)
\end{align*}
The integrand is continuous, and positive for $\xi=\eta$. We conclude that the whole integral is positive, and so is the limit variance. \jump

To prove the asymptotic normality, note when $d\geq 3$ that $\Omega\in L^4$. It follows from Theorem \ref{thm6} that the variance of $Z_\lambda^{(4+)}(\phi)$ converges to a finite constant, which is positive thanks to the previous computation , and that $Z_\lambda^{(4+)}(\phi)$ satisfies a CLT. Since the variance of $Z_\lambda^{(2)}(\phi)$ converges to zero, then $Z_\lambda^{(2)}(\phi)$ converges in probability to zero and the CLT for $Z_\lambda(\phi)$ follows from Slutsky's Theorem.\jump

In the case $d=2$, the CLT for $Z_\lambda^{(4)}(\phi)$ follows directly from the Fourth Moment theorem, since $\Omega\in L^6$ implies that the contractions tends to zero. The variance of $Z_\lambda^{(6+)}(\phi)$ converges to a constant by Theorem \ref{thm6}, and so does the variance of $Z_\lambda(\phi) - Z_\lambda^{(4)}(\phi)$. The CLT for $Z_\lambda(\phi)$ then follows from Slutsky's Theorem.\jump

\noindent \underline{$\bullet$ Assume that $Y$ is the gradient of a real isotropic random wave}: \jump

The proof is very similar to the previous case with minor adaptations. Let $\mu$ be the spectral measure of the process $(\nabla F,TF)$ taking values in the space of Hermitian forms on $W = V\otimes(S^2(V^*))$.  From Lemma \ref{lemma23}, the spectral measure $\mu$ has a density w.r.t the spherical measure $\sigma$ given by
\[\dd \mu = \left[2i\pi\xi,-\frac{4\pi^2}{\sqrt{2\beta}}\left(\xi\otimes\xi-\frac{\gamma}{d}\|\xi\|^2\Id\right)\right]\otimes \left[-2i\pi\xi, -\frac{4\pi^2}{\sqrt{2\beta}}\left(\xi\otimes\xi-\frac{\gamma}{d}\|\xi\|^2\Id\right)\right]\dd \sigma.\]
Thanks to Lemma \ref{lemma16}, the measure $\mu*\mu$ has a density $\Psi$ w.r.t the measure $\sigma*\sigma$, which is bounded and polar-continuous. As in the previous paragraph, we can conclude that the limit constant is positive if we are able to show that for $\theta\in S^{d-1}$ the quantity $\Tr(f_4\Psi(0,\theta)f_4\Psi(0,\theta))$ is positive. According to Lemma \ref{lemma16},
\[\Psi(0,\theta) = \int \left(\left[2i\pi\xi,-\frac{4\pi^2}{\sqrt{2\beta}}\left(\xi\otimes\xi-\frac{\gamma}{d}\|\xi\|^2\Id\right)\right]\!\otimes\! \left[-2i\pi\xi, -\frac{4\pi^2}{\sqrt{2\beta}}\left(\xi\otimes\xi-\frac{\gamma}{d}\|\xi\|^2\Id\right)\right]\right)^{\otimes 2}\!\!\!\!\!\!\!\dd \sigma_\theta(\xi),\numberthis\label{eq:24}\]
where $\sigma_\theta$ is the uniform probability measure on the $d-2$-sphere on $V$ with normal vector $\theta$. The map $\Psi(0,\theta)$ can be seen as symmetric bilinear map over $S^2(W^*)$, as the fourth chaotic component $f_4$, see the paragraph above Lemma \ref{lemma27}. In particular, it follows from the symmetry of the integrand in Equation \eqref{eq:24} that the component associated to the subspace $V\otimes S^2(V)$ of $S^2(W^*)$ defined in Equation \eqref{eq:21} is stable by $\Psi(0,\theta)$. The restriction of $\Psi(0,\theta)$ to this subspace, denoted $\psi^r(0,\theta)$, is given by
\[\Psi^r(0,\theta) = \frac{(4\pi)^3}{2\beta}\int_{\Sp^{d-1}} \left[\xi\otimes\left(\xi\otimes\xi-\frac{\gamma}{4\pi^2}\Id\right)\right]^{\otimes 2} \dd \sigma_\theta(\xi).\]
We then have
\begin{align*}
&\Tr(f_4\Psi(0,\theta)f_4\Psi(0,\theta)) \geq \langle f_4^r\Psi^r(0,\theta)f_4^r\Psi^r(0,\theta))\\
&\geq \left(\frac{\alpha}{d}\right)^2(4\pi^2)^3\int_{\Sp^{d-1}}\int_{\Sp^{d-1}}f_4\left(\xi\otimes\left(\xi\otimes\xi-\frac{\gamma}{d}\|\xi\|^2\Id\right),\xi\otimes\left(\eta\otimes\eta-\frac{\gamma}{d}\|\xi\|^2\Id\right)\right)\dd \sigma_\theta(\xi)\dd \sigma_\theta(\eta).
\end{align*}
The integrand is continuous, and when $\xi=\eta$ its value is given, thanks to the expression of $f_4^r$ given by Lemma \ref{lemma27}, by
\begin{align*}
(\|\xi\|^2)^2\left[A\Tr\left(\left(\xi\otimes\xi-\frac{\gamma}{4\pi^2}\Id\right)^2\right) + B\Tr\left(\xi\otimes\xi-\frac{\gamma}{4\pi^2}\Id\right)^2\right]^2 = \|\xi\|^4\left(\frac{2\beta_0}{d}\alpha\right)^2,
\end{align*}
where the last equality follows from the same steps as in Equation \eqref{eq:22}. We conclude that the whole integral is positive, and so is the limit variance. The asymptotic normality is in all point similar to the previous paragraph.
\begin{acknowledgements}
\noindent I wish to thank Giovanni Peccati and Michele Stecconi for the numerous discussions about Berry cancellation, and their careful reading of the paper which greatly helped to improve the exposition of the results.
\end{acknowledgements}

\printbibliography
\end{document}